\newcommand{\refone}[1]{{#1}}
\newcommand{\reftwo}[1]{{#1}}
\newcommand{\refthree}[1]{{#1}}
\newcommand{\otherchange}[1]{{#1}}
\newcommand{\munderbar}[1]{\underaccent{\bar}{#1}}
\newcommand{\R}{{\mathbb R}}
\newcommand{\idx}[1]{\text{\normalfont\tiny #1}}
\newcommand{\is}{\idx{I}}
\newcommand{\js}{\idx{J}}
\newcommand{\ks}{\idx{K}}
\newcommand{\ls}{\idx{L}}
\newcommand{\il}{\text{\normalfont I}}
\newcommand{\jl}{\text{\normalfont J}}
\newcommand{\kl}{\text{\normalfont K}}
\newcommand{\elll}{\text{\normalfont L}}
\newcommand{\abs}[1]{\left|#1\right|}
\newcommand{\dr}{\mathrm{d}}
\newcommand{\pd}{\partial}
\newcommand{\dd}[2]{\frac{\dr#1}{\dr#2}}
\newcommand{\pdd}[2]{\frac{\partial #1}{\partial #2}}
\newcommand{\bip}[3]{\left( #1,#2 \right)_{#3}}
\newcommand{\ip}[3]{\left( #1, #2 \right)_{#3}}
\newcommand{\norm}[1]{\left\lVert #1 \right\rVert}
\newcommand{\seminorm}[1]{{\left\vert\kern-0.25ex\left\vert\kern-0.25ex\left\vert #1
    \right\vert\kern-0.25ex\right\vert\kern-0.25ex\right\vert}}
\newcommand{\myint}[4]{ \int \limits_{#1}^{#2} \! #3 \, \mathrm{d} #4}
\newcommand{\refdom}{\omega}
\newcommand{\physdom}{\Omega}
\newcommand{\physdomfull}{\bar{\Omega}}
\newcommand{\refboundary}{\pd \refdom}
\newcommand{\physboundary}{\pd \physdom}
\newcommand{\physinterface}{\Gamma}
\newcommand{\physboundaryfull}{\pd \physdomfull}
\newcommand{\volint}[2]{ \ip{#1}{#2}{\refdom} }
\newcommand{\volintphys}[2]{ \ip{#1}{#2}{\physdom} }
\newcommand{\volintphysfull}[2]{ \ip{#1}{#2}{\physdomfull} }
\newcommand{\surfint}[2]{ \bip{#1}{#2}{\refboundary} }
\newcommand{\surfintphys}[2]{ \bip{#1}{#2}{\physboundary} }
\newcommand{\intintphys}[2]{ \bip{#1}{#2}{\physinterface} }
\newcommand{\dintintphys}[2]{ \bip{#1}{#2}{\physinterface} }
\newcommand{\dvolint}[2]{ \ip{#1}{#2}{\refdom} }
\newcommand{\dvolintphys}[2]{ \ip{#1}{#2}{\physdom} }
\newcommand{\dvolintphysfull}[2]{ \ip{#1}{#2}{\physdomfull} }
\newcommand{\dsurfint}[2]{ \bip{#1}{#2}{\refboundary}}
\newcommand{\dsurfintphys}[2]{ \bip{#1}{#2}{\physboundary}}
\newcommand{\dsurfintindexsign}[4]{ \bip{#1}{#2}{\refboundary_{#3}^{#4}} }
\newcommand{\volintphysu}[2]{ \ip{#1}{#2}{\physdom_u} }
\newcommand{\volintphysv}[2]{ \ip{#1}{#2}{\physdom_v} }
\newcommand{\dvolintphysu}[2]{ \ip{#1}{#2}{\physdom_u} }
\newcommand{\dvolintphysv}[2]{ \ip{#1}{#2}{\physdom_v} }
\newcommand{\bv}[1]{\mathbf{#1}}
\newcommand{\db}{\hat{D}}
\newcommand{\ddxi}{\partial_{\idx{I}}}
\newcommand{\ddxk}{\partial_{\idx{K}}}
\newcommand{\ddxii}{\partial_{i}}
\newcommand{\ddxik}{\partial_{k}}
\newcommand{\ddi}{\partial_i}
\newcommand{\ddj}{\partial_j}
\newcommand{\ddk}{\partial_k}
\newcommand{\dil}{\delta_{i \ell}}
\newcommand{\djl}{\delta_{j \ell}}
\newcommand{\dijx}{\delta_{\is \js}}
\newcommand{\dikx}{\delta_{\is \ks}}
\newcommand{\djkx}{\delta_{\js \ks}}
\newcommand{\dklx}{\delta_{\ks \ls}}
\newcommand{\dilx}{\delta_{\is \ls}}
\newcommand{\djlx}{\delta_{\js \ls}}
\newcommand{\sa}{\sum \limits_{\alpha}}
\newcommand{\sar}{\sum \limits_{\alpha,r}}
\newcommand{\sars}{\sum \limits_{\alpha,r,s}}
\newcommand{\sess}{\sum \limits_{s}}
\newcommand{\sk}{\sum \limits_{k}}
\newcommand{\bfu}{{\bv{u}}}
\newcommand{\bfv}{{\bv{v}}}
\newcommand{\bfw}{{\bv{w}}}
\newcommand{\bff}{{\bv{f}}}
\newcommand{\bfs}{{\bv{s}}}
\newcommand{\bfphi}{{\boldsymbol{\phi}}}
\newcommand{\bfpsi}{{\boldsymbol{\psi}}}
\newcommand{\bfchi}{{\boldsymbol{\chi}}}
\newcommand{\bfut}{\dot{\bfu}}
\newcommand{\bfutt}{\ddot{\bfu}}
\newcommand{\stiffref}{c}
\newcommand{\stiffphys}{C}
\newcommand{\tractionop}{\mathbb{T}}
\newcommand{\ctractionop}{T}
\newcommand{\satsym}{\mathbb{Z}}
\newcommand{\rhoref}{\varrho}
\newcommand{\Rmultid}{\mathbb{W}}
\newcommand{\contOpInt}{\mathcal{D}}
\newcommand{\cD}{\mathbb{D}}
\newcommand{\cE}{\mathbb{E}}
\newcommand{\K}{F}
\newcommand{\Kapprox}{\munderbar{\K}}
\newcommand{\Japprox}{\munderbar{J}}
\newcommand{\stiffrefapprox}{\munderbar{\stiffref}}
\newcommand{\gauge}{A}
\newcommand{\ddelta}{\mathbf{d}}
\newcommand{\jump}[1]{\ensuremath{[\![#1]\!]} }
\newcommand{\bfju}{\jump{\bfu}}
\newcommand{\surfjacobian}{\hat{J}}
\newcommand{\setoffaces}{\widehat{\refboundary}}
\newcommand{\surfjacobianapprox}{\munderbar{\surfjacobian}}
\theoremstyle{plain}
\newtheorem{theorem}{Theorem}
\newtheorem{lemma}{Lemma}
\theoremstyle{remark}
\newtheorem{remark}{Remark}
\theoremstyle{definition}
\newtheorem{defi}{Definition}
\title{Elastic wave propagation in anisotropic solids using energy-stable finite differences with weakly enforced boundary and interface conditions}
\author{Martin Almquist\thanks{Department of Geophysics, Stanford University, Stanford, CA, USA} \and Eric M.\ Dunham$^{*,}$\thanks{Institute for Computational and Mathematical Engineering, Stanford University, Stanford, CA, USA} }
\date{}
\begin{document}
\maketitle

\begin{abstract}
  Summation-by-parts (SBP) finite difference methods have several desirable properties for second-order wave equations. They combine the computational efficiency of narrow-stencil finite difference operators with provable stability on curvilinear multiblock grids. While several techniques for boundary and interface conditions exist, weak imposition via simultaneous approximation terms (SATs) is perhaps the most flexible one. Although SBP methods have been applied to elastic wave equations many times, an SBP-SAT method for general anisotropic elastic wave equations has not yet been presented in the literature. We fill this gap by deriving energy-stable self-adjoint SBP-SAT methods for general anisotropic materials on curvilinear multiblock grids. The methods are based on fully compatible SBP operators. \refone{Although this paper focuses on classical SBP finite difference operators, the presented boundary and interface treatments are general and apply to a range of methods that satisfy an SBP property}. We demonstrate the stability and accuracy properties of a particular set of fully compatible SBP-SAT schemes using the method of manufactured solutions. We also demonstrate the \otherchange{utility} of the new method in elastodynamic cloaking and seismic imaging in mountainous regions.
\end{abstract}

\section{Introduction}
This paper considers numerical solution of elastic wave equations in complex geometries. We deal with the most general form of the anisotropic elastic wave equation (AEWE), which includes the isotropic elastic wave equation (IEWE) as a special case. Generally speaking, high-order finite difference methods are computationally efficient for wave-dominated equations with smooth solutions \cite{KreissOliger72}.  Finite difference (FD) operators with the summation-by-parts (SBP) property \cite{KreissScherer74} lead to energy-stable discretizations on curvilinear multiblock grids when combined with suitable methods for imposing boundary and interface conditions. SBP FD methods may be used alone in moderately complex geometries, or as part of efficient hybrid solvers \cite{Lundquist2018,Gao2019} when unstructured meshing capabilities are required in parts of the domain. Recent applications of SBP methods to elastic wave equations include \cite{Rao2018}, which applied a second-order accurate scheme to tilted transversely isotropic media, and \cite{Sun2020}, which solved the first-order form of the IEWE. Another noteworthy contribution \cite{Dovgilovich15} introduced dual first-derivative SBP operators to solve the AEWE.

To minimize the number of unknowns, this paper discretizes the second-order form of the AEWE. For second order equations, narrow-stencil second-derivative SBP operators \cite{MattssonNordstrom04,Mattsson11} typically provide superior accuracy compared to applying a first-derivative operator twice. As a rule of thumb, the global convergence rate is one order higher \cite{MattssonNordstrom04} and the numerical dispersion relation mimics the exact dispersion relation better for marginally resolved modes \cite{Kreiss_wave}. Hence, we only consider narrow-stencil operators in this paper.

While SBP operators may be combined with various techniques for imposing boundary and interface conditions, weak enforcement via simultaneous approximation terms (SATs) \cite{CarpenterGottlieb94} has proven competitive in a wide range of applications \cite{DelReyFernandez2014a,Svard2014}. \refone{The SAT boundary and interface treatment does not introduce systems of equations to solve} and extends naturally to nonconforming grid blocks (see for example \cite{MattssonCarpenter09,Lundquist2018,AlmquistWangWerpers2019}) and nonlinear frictional interface conditions (see for example \cite{Kozdon2011,Kozdon2013}). Previous works that used narrow-stencil SBP operators combined with other boundary treatments include \cite{Petersson2015,Duru201437,Duru2014}. In \cite{Petersson2015}, Petersson and Sj{\"o}green presented a fourth order SBP scheme for the AEWE on curvilinear single-block grids. Boundary conditions were imposed with a ghost-point technique, \refone{which requires solving small linear systems for the ghost-point values on boundaries and interfaces}. In \cite{Duru201437}, Duru and Virta presented an SBP-SAT scheme for the IEWE on curvilinear multiblock grids. Traction boundary conditions were imposed using SATs and displacement boundary conditions were strongly enforced, using the injection method \cite{Duru2014}. Herein, we construct an SBP-SAT method for the AEWE on curvilinear multiblock grids. Robin boundary conditions (which include traction conditions), displacement boundary conditions, and interface conditions, are all imposed using SATs. We prove that the spatial discretization is energy stable and self-adjoint. \refone{While we only perform numerical experiments with classical finite difference SBP operators on uniform grids, the presented methodology is general and may be applied to a wide range of methods that satisfy the SBP property. Examples include finite differences on non-uniform grids \cite{MattssonAlmquistCarpenter13,DelReyFernandez2014}, multidimensional finite differences \cite{Hicken2016}, and discontinuous Galerkin spectral element methods \cite{Gassner2013, DelReyFernandez2014}. }

The methods derived in this paper are based on \emph{fully compatible} diagonal-norm second-derivative SBP operators \cite{MattssonParisi09} (see Section \ref{sec:sbp} for the definition). The assumption of full compatibility greatly simplifies the stability analysis when using SATs to impose displacement boundary conditions and inter-block couplings. The significant simplifications facilitated by the fully compatible operators were noted in \cite{Duru201437} for the IEWE, and later in \cite{AlmquistDunham2020} for the acoustic wave equation. The fully compatible operators are to be contrasted with \emph{compatible} operators, which are more commonly used. The compatible operators constructed by Mattsson in \cite{Mattsson11} with interior order $2q$ have boundary closures of order $q$ and boundary derivative operators of order $q+1$, yielding $(q+2)$th order global accuracy in most numerical experiments. Fisher and Carpenter \cite{FisherCarpenter2013} constructed a fully compatible $2q=4$ operator with $q$th order closures and boundary derivatives of order $q$. The reduction of the boundary derivative order (compared to Mattsson's compatible operators) increases the local truncation error by one order for Neumann-type boundary conditions and inter-block couplings. To the best of our knowledge, fully compatible operators for variable coefficients with $q$th order boundary closures and $(q+1)$th order boundary derivative operators are not yet available in the literature. We strongly encourage efforts to construct such operators. Until they become available, we resort to so-called \emph{adapted} fully compatible operators, which can be constructed from any set of compatible operators \cite{Duru2014} (see Section \ref{sec:sbp}). The adapted operators are identical to the original operators except at the first and last grid points, where the accuracy is reduced to $(q-1)$th order. By the general result in \cite{SvardNordstrom06}, we expect the $\ell^2$ error of pointwise stable schemes to be of order $\min(q_b+2,2q)$, where $q_b$ denotes the boundary accuracy. This implies that the adapted operators might yield up to one order lower convergence rates than the corresponding compatible operators. Remarkably, however, experiments with the IEWE in \cite{Duru201437} showed no loss in convergence rates. The adapted $2q=6$ operator even yielded smaller errors than the original operator. Although a theoretical explanation of this super convergence is currently lacking, the adapted operators seem attractive from a practical point of view. In this paper, we investigate how the adapted operators fare when applied to the AEWE.

The rest of this paper is organized as follows. We introduce notational conventions in Section \ref{sec:notation}. In Section \ref{sec:elasticity}, we review the equations of linear anisotropic elasticity and discuss how they change under coordinate transformations. We introduce compatible and adapted fully compatible SBP operators in Section \ref{sec:sbp}, and combine them with proper SATs to construct energy-stable self-adjoint schemes for Robin and displacement boundary conditions in Section \ref{sec:bc}. In Section \ref{sec:interfaces}, we derive SATs for grid-block couplings. Numerical experiments are presented in Section \ref{sec:num_exp}. We evaluate the convergence rates of the new multi-block SBP-SAT scheme against a manufactured solution and show the applicability of the scheme in elastodynamic cloaking and seismic imaging of the Earth. Conclusions follow in Section \ref{sec:conclusions}.

\section{Notation conventions} \label{sec:notation}
Let $\Omega \subset \R^d$ denote a bounded domain in $d$ dimensions and let $u,v \in L^2(\Omega)$. We use the $L^2$ inner product:
\begin{equation}
  \volintphys{u}{v} = \myint{\physdom}{}{uv}{\physdom}.
\end{equation}
Similarly, we use the notation
\begin{equation}
  \surfintphys{u}{v} = \myint{\physboundary}{}{uv}{S}
\end{equation}
for surface integrals.
Note, however, that $\surfintphys{\cdot}{\cdot}$ is not an inner product but a bilinear form. We use the summation convention for repeated subscript indices so that
\begin{equation}
  u_i v_i = \sum \limits_{i=1}^d u_i v_i.
\end{equation}
The summation convention applies to inner products too, i.e.,
\begin{equation}
  \ip{u_i}{v_i}{} = \sum \limits_{i=1}^d \ip{u_i}{v_i}{} .
\end{equation}
The summation convention only applies to indices $i,j,k,\ell,m,\il,\jl,\kl,$ and $\elll$. In particular, it does not apply to $x$, $\gamma$, or $N$.

Boldface font is reserved for vectors $\bfu$ whose elements approximate some scalar field $u$ evaluated on the grid. We will later define discrete inner products and use the summation convention in the discrete setting too, so that
\begin{equation}
  \ip{\bfu_i}{\bfv_i}{} = \sum \limits_{i=1}^d \ip{\bfu_i}{\bfv_i}{} .
\end{equation}

For all spatially variable coefficients, we use the same symbol also in the discrete case, which then is understood to denote a diagonal matrix with the values of that coefficient on the diagonal. The outward unit normals $\hat{n}$ and $\hat{\nu}$ (see Figure \ref{fig:transform}) are regarded as variable coefficients that take non-zero values only at boundary points. In the discrete setting, the values of $\hat{n}$ and $\hat{\nu}$ at edge and corner points change with context. When integrating over a face, $\hat{n}$ (or $\hat{\nu}$) is understood to denote the unit normal to that face even at edge and corner points. The same convention applies to the surface area scale factor $\surfjacobian$.

\section{Equations of linear elasticity} \label{sec:elasticity}
Let $\{\vec{E}_\is\}$ denote an orthonormal basis in $\R^d$, let $\vec{X} = X_\is \vec{E}_\is$, and let $\ddxi = \pd/\pd X_\is$. The generalized Hooke's law for an elastic medium relates stress to strain and reads
\begin{equation} \label{eq:hooke}
  \sigma_{\is \js} = \stiffphys_{\idx{IJKL}} \ddxk u_\ls,
\end{equation}
where $u_\ls$ is the displacement vector, $\sigma_{\is \js}$ is the stress tensor, and $\stiffphys_{\idx{IJKL}}$ is the elastic stiffness tensor. \refone{Note that all indices range from $1$ to $d$.} The stiffness tensor has the major symmetry
\begin{equation} \label{eq:symmetry_phys}
    \stiffphys_{\idx{IJKL}} = \stiffphys_{\idx{KLIJ}}.
\end{equation}
Normal elastic materials also have the minor symmetry
\begin{equation} \label{eq:symmetry_minor}
  \stiffphys_{\idx{IJKL}} = \stiffphys_{\idx{JIKL}},
\end{equation}
which implies that the stress tensor is symmetric, i.e., $\sigma_{\is \js} = \sigma_{\js \is}$. In this paper, we consider the more general theory of Cosserat elasticity \cite{Cosserat}, in which stress is not necessarily symmetric. That is, we do not assume that the stiffness tensor has the minor symmetry \eqref{eq:symmetry_minor}. Requiring a non-negative elastic strain energy density results in the condition
\begin{equation} \label{eq:definite_phys}
    S_{\idx{IJ}} \stiffphys_{\idx{IJKL}} S_{\idx{KL}} \geq 0 \quad \forall S_{\idx{IJ}},
\end{equation}
which we assume that $\stiffphys_{\idx{IJKL}}$ satisfies. The momentum balance reads
\begin{equation} \label{eq:newton}
  \rho \ddot{u}_{\js} = \ddxi \sigma_{\is \js} + f_\js,
\end{equation}
where $\rho$ is density and $f_\js$ denotes external body forces. Substituting Hooke's law \eqref{eq:hooke} in \eqref{eq:newton} yields the elastic wave equation for displacements,
\begin{equation}
\begin{array}{ll} \label{eq:wave_eq_general}
  \rho \ddot{u}_{\js} = \ddxi \stiffphys_{\idx{IJKL}} \ddxk u_{\ls} + f_\js, & \vec{X} \in \physdom, \\ 
  L_{\idx{IJ}} u_\js = 0, & \vec{X} \in \physboundary, \\ 
\end{array}
\end{equation}
where $\physdom \subset \R^d$ is a bounded domain with outward unit normal $\hat{n} = n_\is \vec{E}_\is$ and the linear operator $L_{\is \js}$ represents well-posed boundary conditions. The traction vector $\vec{\tau} = \tau_\js \vec{E}_\js$ acting on $\physboundary$ is
\begin{equation} \label{eq:traction_def}
  \tau_\js = n_\is \sigma_{\is \js} = n_\is \stiffphys_{\idx{IJKL}} \ddxk u_{\ls} .
\end{equation}
For future use we define the traction operator
\begin{equation} \label{eq:tractionop_cont_def}
  \ctractionop_{\js \ls} = n_\is \stiffphys_{\idx{IJKL}} \ddxk
\end{equation}
such that $\tau_\js = \ctractionop_{\js \ls} u_{\ls}$.

In the absence of external body forces, the energy method, which amounts to multiplying the first equation in \eqref{eq:wave_eq_general} by $\dot{u}_\js$ and integrating over $\physdom$, leads to
\begin{equation} \label{eq:phys_energy_ibp}
\begin{aligned}
    \volintphys{\dot{u}_\js}{\rho \ddot{u}_\js} &= \volintphys{\dot{u}_\js}{\ddxi \stiffphys_{\idx{IJKL}} \ddxk u_{\ls}} \\
    &= \surfintphys{\dot{u}_\js}{n_\is \stiffphys_{\idx{IJKL}} \ddxk u_{\ls}} - \volintphys{\ddxi \dot{u}_\js}{\stiffphys_{\idx{IJKL}} \ddxk u_{\ls}} \\
    &= \surfintphys{\dot{u}_\js}{\tau_\js} - \volintphys{\ddxi \dot{u}_\js}{\stiffphys_{\idx{IJKL}} \ddxk u_{\ls}},
\end{aligned}
\end{equation}
where we used integration by parts and the definition of $\tau_\js$.
We have
\begin{equation}
  \volintphys{\dot{u}_\js}{\rho \ddot{u}_\js} = \frac{1}{2} \dd{}{t} \volintphys{\dot{u}_\js}{\rho \dot{u}_\js}.
\end{equation}
The major symmetry of the stiffness tensor \eqref{eq:symmetry_phys} yields
\begin{equation}
  \volintphys{\ddxi \dot{u}_\js}{\stiffphys_{\idx{IJKL}} \ddxk u_{\ls}} = \volintphys{\ddxk \dot{u}_{\ls}}{\stiffphys_{\idx{IJKL}} \ddxi u_\js} = \frac{1}{2} \dd{}{t} \volintphys{\ddxi u_\js}{\stiffphys_{\idx{IJKL}} \ddxk u_{\ls}}.
\end{equation}
The total energy $\mathcal{E}$ is the sum of kinetic and strain energy,
\begin{equation} \label{eq:energy_cont}
  \mathcal{E} = \frac{1}{2} \volintphys{\dot{u}_\js}{\rho \dot{u}_\js} + \frac{1}{2} \volintphys{\ddxi u_\js}{\stiffphys_{\idx{IJKL}} \ddxk u_{\ls}}.
\end{equation}
The positive semidefiniteness of the stiffness tensor \eqref{eq:definite_phys} ensures that the strain energy is non-negative. Rearranging terms in \eqref{eq:phys_energy_ibp} leads to the energy rate
\begin{equation} \label{eq:energy_rate_phys}
    \dd{\mathcal{E}}{t} = \surfintphys{\dot{u}_\js}{\tau_\js}.
\end{equation}
We note that homogeneous displacement boundary conditions ($u_\js = 0$) and homogeneous traction boundary conditions ($\tau_\js = 0$) both yield energy conservation.

\otherchange{
\subsection{Wave speeds in anisotropic solids} \label{sec:plane_wave}
A plane wave propagating in unit direction $\vec{\xi}$ can be described by the equivalent expressions
\begin{equation}
  u_\js = U_\js e^{i(k_\is X_\is - \varphi t)} = U_\js e^{ik (\xi_\is X_\is - vt)} = U_\js e^{i \varphi(s_\is X_\is - t)},
\end{equation}
where $U_\js$ is the polarization vector, $k_\is=k \xi_\is$ is the wave vector, $\varphi$ is the angular frequency, $v$ is the phase velocity, and $s_\is = k_\is/\varphi$ is the slowness vector.
Assuming a homogeneous solid and no external forces, inserting the plane wave ansatz into the elastic wave equation (the first equation in \eqref{eq:wave_eq_general}) yields the Christoffel equation \cite{Synge1956,Achenbach}
\begin{equation} \label{eq:christoffel}
  \left(v^2 \delta_{\js \ls} - \rho^{-1} \xi_\is C_{\is \js \ks \ls} \xi_\ks\right) U_{\ls} = 0.
\end{equation}
For nontrivial solutions to exist we must have $\det\left(v^2 \delta_{\js \ls} - \rho^{-1} \xi_\is C_{\is \js \ks \ls} \xi_\ks\right)=0$, which is the dispersion relation. The phase velocities are the positive square roots of the eigenvalues of the operator $\rho^{-1}\xi_\is C_{\is \js \ks \ls} \xi_\ks$. The eigenvalues depend on the direction of propagation, $v = v(\vec{\xi})$. In two spatial dimensions there are two body-wave solutions to \eqref{eq:christoffel}: the quasi-P-wave and the quasi-S-wave. In the isotropic case these waves reduce to the P- and S-waves.

In Section \ref{sec:cloaking} we will use slowness surfaces to illustrate the properties of an anisotropic medium. Slowness surfaces are drawn in the slowness vector space and satisfy
\begin{equation}
  s_\is = \frac{\xi_\is}{v(\vec{\xi})},
\end{equation}
where $v$ is one of the phase velocities. For isotropic solids, the phase velocities are direction-independent and hence the slowness surfaces are spherical (circular in two dimensions). The faster the wave, the smaller the radius of the slowness surface.
}

\subsection{Coordinate transformation} \label{sec:transform}
Let $\{\vec{e}_i\}$ denote an orthonormal basis in $\R^d$ and let $\vec{x} = x_i \vec{e}_i$. Introduce a smooth one-to-one mapping $X_\is = X_{\is}(x_1,...,x_d)$ from the reference domain $\refdom = [0, \;1]^d$ to the physical domain $\physdom$, as illustrated in Figure \ref{fig:transform}.
\begin{figure}[h]
    \centering
    \includegraphics[width=0.55\linewidth, trim=0cm 0cm 0cm 0cm, clip=true]{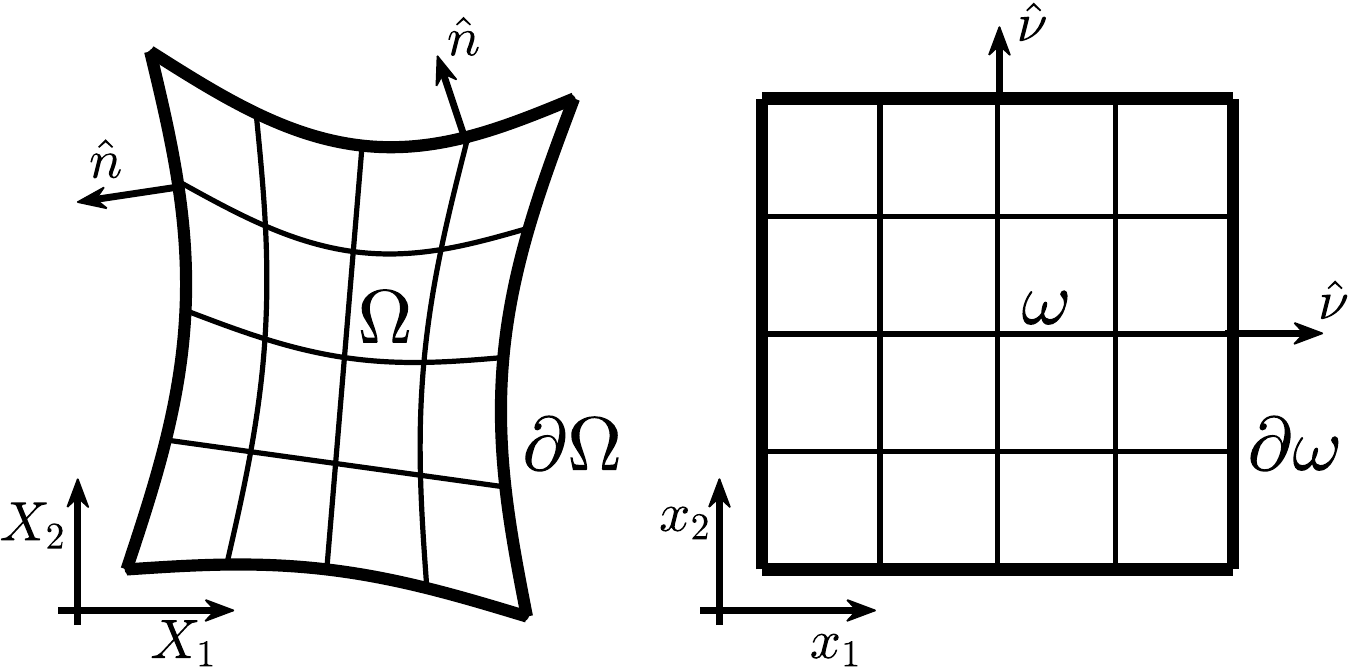}
    \caption{Schematic of the physical domain $\physdom$ and the reference domain $\refdom$}\label{fig:transform}
\end{figure}
We will use uppercase letters for quantities related to the physical domain and lowercase letters for similar quantities in the reference domain. We define $\ddxii = \pd / \pd x_i$. Let
\begin{equation}
  \K_{\is i} = \pd x_i / \pd X_{\is}
\end{equation}
denote the transformation gradient. \refone{Note that the object $\K_{\is i}$ is not a second order tensor because it maps from one domain to the other \cite{Norris2011}. By the chain rule,
\begin{equation} \label{eq:transf_chain}
    \ddxi = \K_{\is i} \ddxii.
\end{equation}
Further, let
\begin{equation}
  J = \det[(\K^{-1})_{i \is} ]
\end{equation}
denote the Jacobian determinant of the mapping from $\refdom$ to $\physdom$.} We assume $J>0$.
The following metric identities are well known (see \cite{thompson1985numerical}):
\begin{equation} \label{eq:identity}
    J \K_{\is i} \ddxii = \ddxii J \K_{\is i}.
\end{equation}
Let $\vec{a}_i$ denote the covariant basis vectors:
\begin{equation} \label{eq:cov_base_def}
  \vec{a}_i = \ddi \vec{X} = \ddi X_{\is} \vec{E}_{\is} = (\K^{-1})_{i \is} \vec{E}_{\is}.
\end{equation}

\subsubsection{Transforming the PDE}
Using first \eqref{eq:transf_chain} and then \eqref{eq:identity}, we have
\begin{equation} \label{eq:cont_transf_operator_first}
\begin{aligned}
    \ddxi \stiffphys_{\idx{IJKL}} \ddxk  &= \K_{\is i} \ddxii \stiffphys_{\idx{IJKL}} \K_{\ks k} \ddxik = J^{-1} \ddxii \K_{\is i} J \stiffphys_{\idx{IJKL}} \K_{\ks k} \ddxik .
\end{aligned}
\end{equation}
Introduce a change of variables
\begin{equation}
  u_\is = \gauge_{\is i} u_i, \quad \tau_\is = \gauge_{\is i} \tau_i,
\end{equation}
for some $A_{\is i}$ to be discussed later. We can now write the equations of motion in \eqref{eq:wave_eq_general} as
\begin{equation}
  J \rho  \ddot{u}_j = (\gauge^{-1})_{j \js} \ddxii \K_{\is i} J \stiffphys_{\idx{IJKL}} \K_{\ks k} \ddxik \gauge_{\ls \ell} u_{\ell} + J (\gauge^{-1})_{j\js} f_{\js}.
\end{equation}
In this paper we will use the trivial change of variables
\begin{equation}
  \gauge_{\is i} = \delta_{\is i},
\end{equation}
which yields the equations of motion
\begin{equation}
  J \rho  \ddot{u}_j = \ddxii \K_{\is i} J \stiffphys_{\is j \ks \ell} \K_{\ks k} \ddxik u_{\ell} + J f_j.
\end{equation}
Define the transformed density and stiffness tensor
\begin{equation} \label{eq:transf_density_stiffness}
  \rhoref = J \rho, \quad \stiffref_{i j k \ell} = \K_{\is i} J \stiffphys_{\is j \ks \ell} \K_{\ks k}.
\end{equation}
The transformed equation, posed on the unit cube $\refdom$, reads
\begin{equation} \label{eq:model_problem_transf}
\begin{array}{ll}
  \rhoref \ddot{u}_j = \ddi \stiffref_{i j k \ell} \ddk u_{\ell} + Jf_j, & \vec{x} \in \refdom, \\ 
  \Lambda_{i j} u_j = 0, & \vec{x} \in \refboundary, \\ 
\end{array}
\end{equation}
where $\Lambda_{ij}$ denotes the transformation of $L_{\is \js}$. Using the definition of $\stiffref_{ijk\ell}$ \refone{and} \eqref{eq:cont_transf_operator_first} shows that
\begin{equation} \label{eq:cont_transf_operator_clean}
  \ddxi \stiffphys_{\idx{IJKL}} \ddxk =  J^{-1} \ddxii \stiffref_{i \js k \ls} \ddxik.
\end{equation}
In Section \ref{sec:sbp} we use formula \eqref{eq:cont_transf_operator_clean} to construct an SBP operator that approximates $\ddxi \stiffphys_{\idx{IJKL}} \ddxk$.

The transformed stiffness tensor retains the major symmetry,
\begin{equation} \label{eq:C_symmetry}
\begin{aligned}
   \stiffref_{k \ell i j} = \K_{\is k} J \stiffphys_{\is \ell \ks j} \K_{\ks i} = \K_{\is k} J \stiffphys_{\ks j \is \ell} \K_{\ks i}
    = \K_{\ks k} J \stiffphys_{\is j \ks \ell} \K_{\is i} = \stiffref_{i j k \ell},
\end{aligned}
\end{equation}
and the semidefiniteness
\begin{equation} \label{eq:C_semidef}
    s_{i j} \stiffref_{i j k \ell} s_{k \ell} = \underbrace{ s_{i j} \K_{\is i} }_{=:S_{\is j}} J \stiffphys_{\is j \ks \ell} \underbrace{\K_{\ks k} s_{k \ell}}_{=:S_{\ks \ell}} \geq 0 \quad \forall s_{ij},
\end{equation}
where we used the semidefiniteness of $\stiffphys_{\idx{IJKL}}$ \eqref{eq:definite_phys} and the positivity of $J$. We conclude that the transformed PDE is of the same form as the original PDE in \eqref{eq:wave_eq_general}. However, even if $\stiffphys_{\idx{IJKL}}$ has the minor symmetry \eqref{eq:symmetry_minor}, the transformed stiffness tensor generally does not, because
\begin{equation}
\begin{aligned}
  \stiffref_{i j k \ell} - \stiffref_{j i k \ell} &= \K_{\is i} J \stiffphys_{\is j \ks \ell} \K_{\ks k} - \K_{\is j} J \stiffphys_{\is i \ks \ell} \K_{\ks k} \\
  &=  \left(\K_{\is i} \stiffphys_{\is j \ks \ell} - \K_{\is j} \stiffphys_{\is i \ks \ell} \right) J \K_{\ks k} \\
  &=  \left(\K_{\is i} \stiffphys_{\is j \ks \ell} - \K_{\is j} \stiffphys_{i \is \ks \ell} \right) J \K_{\ks k} ,
\end{aligned}
\end{equation}
which is nonzero, in general. Hence, the equations of Cosserat materials are invariant under coordinate transformations, but the equations of normal materials are not. It is, however, possible to symmetrize the effective transformed stress tensor by setting (see \cite{Norris2011} for a thorough discussion of coordinate transformations in elastic wave equations)
\begin{equation}
  \gauge_{\is i} = \K_{\is i}.
\end{equation}
This approach introduces additional terms in the transformed equations of motion, similar to those required for Willis materials \cite{Milton2006,Milton2007}, and will not be pursued in the present study.

In the semidiscrete stability proof we will make use of the property
\begin{equation} \label{eq:C_semidef_sum}
  u_j \stiffref_{mjm\ell} u_{\ell} \geq 0 \quad \forall  u_j,
\end{equation}
which follows from \eqref{eq:C_semidef}, because
\begin{equation}
  u_j \stiffref_{mjm\ell} u_{\ell} = \underbrace{u_j \delta_{im}}_{=:U_{ijm}} \stiffref_{ijk\ell} \underbrace{u_{\ell} \delta_{km}}_{=:U_{k \ell m}} =\sum \limits_{m} \underbrace{  U_{ijm} \stiffref_{ijk\ell} U_{k \ell m} }_{\geq 0 \; \forall m } \geq 0 .
\end{equation}

\subsubsection{Integrals and normals}
Since $J \mathrm{d} \refdom$ is the volume element, we have $\mathrm{d} \physdom = J \mathrm{d} \refdom$, and hence
\begin{equation} \label{eq:volint_J}
  \volintphys{u}{v} = \volint{u}{Jv}.
\end{equation}
Similarly, we let $\surfjacobian$ denote the surface area scale factor such that
\begin{equation} \label{eq:surfint_J}
 \surfintphys{u}{v} = \surfint{u}{\surfjacobian v}.
\end{equation}
The surface area scale factor $\surfjacobian$ is related to the covariant basis vectors $\vec{a}_i$ defined in \eqref{eq:cov_base_def} as follows.
In two space dimensions
\begin{equation}
  \surfjacobian = \abs{\vec{a}_i}, \quad x_{j} \in \{0, 1\}, \quad  i, j \mbox{ cyclic},
\end{equation}
and in three space dimensions
\begin{equation}
  \surfjacobian = \abs{\vec{a}_i \times \vec{a}_j} , \quad x_k \in \{0,1\}, \quad  i, j, k \mbox{ cyclic}.
\end{equation}
Let $\hat{\nu} = \nu_i \vec{e}_i$ denote the unit normal to $\refdom$. The normals $\hat{n}$ and $\hat{\nu}$ are related by Nanson's formula \cite{Malvern},
\begin{equation} \label{eq:nanson}
  \surfjacobian n_\is = J \K_{\is i} \nu_i.
\end{equation}

\subsection{Numerical approximation of the transformation gradient}
In this subsection we comment briefly on how numerical approximations of properties of the coordinate transformation may be computed. We compute an approximation $\Kapprox_{\is i} \approx \K_{\is i}$ of the transformation gradient by applying derivative approximations to a given grid. To retain the order of accuracy, $\Kapprox_{\is i}$ needs to be at least as accurate as the finite difference operators used to discretize the PDE. Higher-order approximations, or even the exact $\K_{\is i}$, if available, could also be used. For all numerical experiments in this paper, we compute $\Kapprox_{\is i}$ using first-derivative SBP operators of the same order as we use to solve the PDE. That is, $\Kapprox_{\is i}$ is computed to order $q$ near boundaries and order $2q$ in the interior.

Once $\Kapprox_{\is i}$ is computed, we use relations between the corresponding continuous quantities to define all other approximations. We set
\begin{equation}
  \refone{\Japprox = \det [(\Kapprox^{-1})_{i \is}],}
\end{equation}
\begin{equation}
  \stiffrefapprox_{ijk\ell} = \Kapprox_{\is i} \Japprox \stiffphys_{\is j \ks \ell} \Kapprox_{\ks k},
\end{equation}
\begin{equation}
  \munderbar{\vec{a}}_i = (\Kapprox^{-1})_{i \is} \vec{E}_{\is},
\end{equation}
\begin{equation} \label{eq:surfj_app_2d}
  \surfjacobianapprox = \abs{\munderbar{\vec{a}}_i}, \quad x_{j} \in \{0, 1\}, \quad  i, j \mbox{ cyclic}, \quad \mbox{(in 2D)},
\end{equation}
or
\begin{equation} \label{eq:surfj_app_3d}
  \surfjacobianapprox = \abs{\munderbar{\vec{a}}_i \times \munderbar{\vec{a}}_j} , \quad x_k \in \{0,1\}, \quad  i, j, k \mbox{ cyclic}, \quad \mbox{(in 3D)},
\end{equation}
and
\begin{equation}
  \munderbar{n}_\is = \surfjacobianapprox^{-1} \Japprox \Kapprox_{\is i} \nu_i.
\end{equation}
The only requirements for stability of the semidiscrete scheme (to be introduced later) are $\Japprox > 0$, $\stiffrefapprox_{ijk\ell} = \stiffrefapprox_{k\ell ij}$, $s_{i j} \stiffrefapprox_{i j k \ell} s_{k \ell} \geq 0 \; \forall  s_{ij}$, and $\surfjacobianapprox > 0$. We suggest checking the condition $\Japprox > 0$, which could be violated due to truncation errors. Assuming $\Japprox >0$, the remaining three conditions are guaranteed to be satisfied, regardless of how $\Kapprox_{\is i}$ was computed, because
\begin{equation}
\begin{aligned}
   \stiffrefapprox_{k \ell i j} = \Kapprox_{\is k} \Japprox \stiffphys_{\is \ell \ks j} \Kapprox_{\ks i} = \Kapprox_{\is k} \Japprox \stiffphys_{\ks j \is \ell} \Kapprox_{\ks i}
    = \Kapprox_{\ks k} \Japprox \stiffphys_{\is j \ks \ell} \Kapprox_{\is i} = \stiffrefapprox_{i j k \ell},
\end{aligned}
\end{equation}
\begin{equation}
    s_{i j} \stiffrefapprox_{i j k \ell} s_{k \ell} = \underbrace{ s_{i j} \Kapprox_{\is i} }_{=:S_{\is j}} \Japprox \stiffphys_{\is j \ks \ell} \underbrace{\Kapprox_{\ks k} s_{k \ell}}_{=:S_{\ks \ell}} \geq 0 \quad \forall s_{ij},
\end{equation}
and $\surfjacobianapprox > 0$ follows from formulas \eqref{eq:surfj_app_2d} and \eqref{eq:surfj_app_3d}, combined with the assumption $\Japprox > 0$, which implies that $\Kapprox_{\is i}$ is nonsingular and thus guarantees $\munderbar{\vec{a}}_i \neq \vec{0}$.

Note that since we used Nanson's formula \eqref{eq:nanson} to define $\munderbar{\hat{n}}$, Nanson's formula holds identically for the approximated quantities. We conclude that $\Kapprox_{\is i}$ may be computed with any sufficiently accurate method, as long as the resulting Jacobian is positive.
With a slight abuse of notation, we henceforth drop the underline notation and let it be implied that we may be dealing with approximations in the discrete setting.

\subsubsection{The transformed stiffness tensor of isotropic materials}
Isotropic materials are characterized by the two Lam{\'e} parameters $\lambda$ and $\mu$ and have the stiffness tensor
\begin{equation}
  \stiffphys_{\idx{IJKL}} = \lambda \dijx \dklx + \mu \left( \dikx \djlx + \dilx \djkx \right).
\end{equation}
The isotropic stiffness tensor transforms into
\begin{equation}
\begin{aligned}
  \stiffref_{i j k \ell} &= \K_{\is i} J \stiffphys_{\is j \ks \ell} \K_{\ks k} = \K_{\is i} J \left[ \lambda \delta_{\is j} \delta_{\ks \ell} + \mu \left( \dikx \djl + \delta_{\is \ell} \delta_{j \ks} \right) \right] \K_{\ks k} \\
  &=  J \left[ \lambda \K_{j i} \K_{\ell k} + \mu \left( \K_{\ks i} \djl \K_{\ks k} + \K_{\ell i} \K_{j k} \right) \right] .
\end{aligned}
\end{equation}
In 3D, there are 9 independent parameters in $\K_{\is i}$, which leads to a total of 11 independent parameters in $\stiffref_{i j k \ell}$. In general, the transformed stiffness tensor does not have the minor symmetry even in the isotropic case, because
\begin{equation}
\begin{aligned}
  \stiffref_{i j k \ell} - \stiffref_{j i k \ell} = &J \lambda \left(\K_{ji} - \K_{ij} \right) J\K_{\ell k} \\
  + &J \mu \left[ \left(\K_{\ks i} \djl - \K_{\ks j} \dil\right) \K_{\ks k} + \K_{\ell i} \K_{j k} - \K_{\ell j} \K_{i k} \right],
\end{aligned}
\end{equation}
which is nonzero in general.

\section{Summation-by-parts operators} \label{sec:sbp}
Most of the definitions in this section are not new but are restated here for completeness. The notation follows \cite{AlmquistDunham2020} closely. We consider only diagonal-norm SBP operators. That is, the so-called norm matrix $H_x$ has the structure
\begin{equation} \label{eq:structure_H}
    H_{x} = \mbox{diag}(h_1, h_2, \ldots, h_2, h_1),
\end{equation}
where all $h_i$ are proportional to the grid spacing $h$. The first-derivative SBP operators $D_x \approx \partial_x$ have the integration-by-parts-mimicking property
\begin{equation}
  H_x D_x = -D_x^T H_x - e_{0} e_{0}^T + e_N e_N^T ,
\end{equation}
where the vectors $e_0$ and $e_N$ interpolate or extrapolate to the left and right boundaries, respectively. We herein restrict our attention to grids that include the boundary points of the interval $[x_{L}, x_{R}]$, in which case one may set
\begin{equation} \label{eq:el_er}
e_{0} = \begin{bmatrix} 1, 0 , \ldots, 0\end{bmatrix}^T, \quad e_{N} = \begin{bmatrix} 0, \ldots, 0, 1 \end{bmatrix}^T.
\end{equation}
 We will use the first-derivative operators presented in \cite{MattssonNordstrom04}, which (for orders $2q \geq 6$) correspond to a particular choice of the free parameters in the operators developed in \cite{KreissScherer74,SchererThesis77,OlssonThesis,Strand94}. \refone{These operators have a repeating interior stencil of order $2q$ and boundary closures of order $q$.} The \emph{compatible} narrow-stencil second-derivative operators $D_{xx}(b) \approx \partial_x b \partial_x$ derived in \cite{Mattsson11} are based on the same norm matrix $H_x$ and have the property
\begin{equation} \label{eq:sbp_property_2nd_der}
   H_x D_{xx}(b) = -D_x^T H_x b D_x - R_{xx}(b) - e_{0} e_{0}^T b \db_x + e_N e_N^T b \db_x,
\end{equation}
where the first and last rows of $\db_x$ approximate the first derivative and the interior of $\db_x$ is zero ($\db_x$ was denoted $S$ in \cite{Mattsson11}). \refone{Just like $D_x$, $D_{xx}$ is $q$th order accurate in the boundary closures and $2q$th order accurate in the interior.}
Note that for the SBP operators derived in \cite{Mattsson11}, $e_{0,N}^T D_x \neq e_{0,N}^T \db_x$. If $e_{0,N}^T D_x= e_{0,N}^T\db_x$, then the SBP operators $D_x$ and $D_{xx}$ are said to be \emph{fully compatible} \cite{MattssonParisi09}. The SBP operators derived in \cite{Mattsson11} have $\db_x$ that are accurate of order $q+1$, i.e., one order higher than the boundary closure of $D_x$.

The matrix $R_{xx}(b)$ is symmetric positive semidefinite and consists of undivided difference approximations in such a way that $\bfu^T R_{xx}(b) \bfv$ is zero to order $2q$ \cite{Mattsson11}. Its structure is
\begin{equation}
    R_{xx}(b) =  \sa h^{2\alpha - 2} D_{x^\alpha}^T E_{\alpha}^T H_x B_{\alpha}(b) E_{\alpha} D_{x^\alpha},
\end{equation}
where $\alpha \geq q + 1$; $D_{x^\alpha} \approx \partial^{\alpha}/\partial x^\alpha$; the $E_\alpha$ are of order 1; and the $B_{\alpha}$ are diagonal matrices whose entries are convex combinations of $b(x)$ evaluated on the grid. Let $b_s$ denote $b$ evaluated at the $s$th grid point, and let $(B_{\alpha})_s$ denote the entry in $B_{\alpha}$ associated with the $s$th grid point. The structure of $B_{\alpha}(b)$ is
\begin{equation} \label{eq:B_structure_1d}
  \left(B_{\alpha}(b) \right)_r = \sess \beta_{\alpha,r,s} b_{s}, \quad \beta_{\alpha,r,s} \geq 0 \; \forall \alpha,r,s.
\end{equation}
To simplify the notation we define
\begin{equation}
   \mathcal{D}_{x^\alpha} = h^{\alpha-1} H_x^{1/2} E_{\alpha} D_{x^\alpha}
\end{equation}
such that
\begin{equation}
  R_{xx}(b) =  \sa \mathcal{D}_{x^\alpha}^T  B_{\alpha}(b) \mathcal{D}_{x^\alpha}.
\end{equation}
For future use we prove the following lemma, which states that $R_{xx}$ preserves semidefiniteness of two-tensors.
\begin{lemma} \label{lemma:R_two_tensor}
If $u_{i} S_{ij} u_{j} \geq 0 \; \forall u_{i}$, then
\begin{equation}
  \bfu_{i} R_{xx}(S_{ij}) \bfu_{j} \geq 0 \; \forall \bfu_{i} .
\end{equation}
\begin{proof}
\begin{equation*}
\begin{aligned}
  \bfu_{i} R_{xx}(S_{ij}) \bfu_{j} &= \sa \left( \mathcal{D}_{x^\alpha} \bfu_{i} \right)^T  B_{\alpha}(S_{ij}) \mathcal{D}_{x^\alpha}  \bfu_{j}\\
   &= \sar \left( \mathcal{D}_{x^\alpha} \bfu_{i} \right)_r  \left(B_{\alpha}(S_{ij}) \right)_r \left( \mathcal{D}_{x^\alpha}  \bfu_{j} \right)_r \quad [\mbox{Use \eqref{eq:B_structure_1d}}] \\
   &= \sars \left( \mathcal{D}_{x^\alpha} \bfu_{i} \right)_r  \beta_{\alpha,r,s} (S_{ij})_{s} \left( \mathcal{D}_{x^\alpha}  \bfu_{j} \right)_r \quad [\mbox{Use $u_{i} S_{ij} u_{j} \geq 0$}] \\
   & \geq 0.
\end{aligned}
\end{equation*}
\end{proof}
\end{lemma}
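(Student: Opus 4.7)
The plan is to unpack the definition of $R_{xx}$ and exploit that $B_\alpha$ is diagonal, so that the inner summation over spatial indices $i,j$ can be carried out pointwise on the grid, where non-negativity is provided by the hypothesis.

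First I would substitute the factorization $R_{xx}(b)=\sum_\alpha \mathcal{D}_{x^\alpha}^T B_\alpha(b)\mathcal{D}_{x^\alpha}$ into $\bfu_i R_{xx}(S_{ij})\bfu_j$ and move the $\mathcal{D}_{x^\alpha}^T$ onto the left vector by taking its transpose. This rewrites the expression as $\sum_\alpha (\mathcal{D}_{x^\alpha}\bfu_i)^T B_\alpha(S_{ij})(\mathcal{D}_{x^\alpha}\bfu_j)$, with summation convention still in force on $i,j$. Next I would use that each $B_\alpha(S_{ij})$ is a diagonal matrix indexed by a grid point $r$, so the matrix-vector product collapses to $\sum_r (\mathcal{D}_{x^\alpha}\bfu_i)_r\,(B_\alpha(S_{ij}))_r\,(\mathcal{D}_{x^\alpha}\bfu_j)_r$.

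Then I would expand $(B_\alpha(S_{ij}))_r = \sum_s \beta_{\alpha,r,s}(S_{ij})_s$ using \eqref{eq:B_structure_1d}. After reordering summations, the expression becomes
\begin{equation*}
  \sum_{\alpha,r,s} \beta_{\alpha,r,s}\,(\mathcal{D}_{x^\alpha}\bfu_i)_r\,(S_{ij})_s\,(\mathcal{D}_{x^\alpha}\bfu_j)_r .
\end{equation*}
For fixed $\alpha,r,s$, define the $d$-vector $v_i := (\mathcal{D}_{x^\alpha}\bfu_i)_r$ (a real number for each spatial index $i$). The factor $(\mathcal{D}_{x^\alpha}\bfu_i)_r (S_{ij})_s (\mathcal{D}_{x^\alpha}\bfu_j)_r$ is then precisely $v_i (S_{ij})_s v_j$, which is $\geq 0$ by the hypothesis $u_i S_{ij} u_j \geq 0$ applied pointwise at grid point $s$. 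Since the weights $\beta_{\alpha,r,s}$ are non-negative, the full triple sum is non-negative, which concludes the argument.

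I do not expect any real obstacle: the only point requiring care is keeping the implicit summation over $i,j$ separate from the explicit sums over $\alpha,r,s$, and recognizing that the diagonal structure of $B_\alpha$ together with the pointwise non-negativity of the convex combination coefficients $\beta_{\alpha,r,s}$ is exactly what lets the hypothesis be invoked at each grid point $s$ independently.
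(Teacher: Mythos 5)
Your proposal is correct and follows essentially the same route as the paper's proof: substitute the factorization of $R_{xx}$, use the diagonal structure of $B_\alpha$ to reduce to a sum over grid points, expand via \eqref{eq:B_structure_1d}, and invoke the pointwise semidefiniteness of $S_{ij}$ together with $\beta_{\alpha,r,s}\geq 0$. Your explicit identification of the vector $v_i = (\mathcal{D}_{x^\alpha}\bfu_i)_r$ just makes the final step of the paper's argument more transparent.
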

In particular, Lemma \ref{lemma:R_two_tensor} shows that $R_{xx}$ preserves the semidefiniteness of the two-tensor $\stiffref_{mjm\ell}$ (cf.\ \eqref{eq:C_semidef_sum}):
\begin{equation}
  \bfu_{j} R_{xx}(\stiffref_{mjm\ell}) \bfu_{\ell} \geq 0 \; \forall \bfu_{j}.
\end{equation}

\subsection{Adapted fully compatible SBP operators}
Any compatible second-derivative operator can be turned into a fully compatible operator, here denoted $D_{xx}^{\idx{F}\idx{C}}$, by simply replacing the boundary derivatives $\db_x$ by $D_x$ \cite{Duru201437}. We refer to such operators as \emph{adapted} fully compatible operators. For the operators derived in \cite{Mattsson11}, swapping boundary derivatives amounts to adding terms of order $q-1$ at the grid end points,
\begin{equation}
  D_{xx}^{\idx{F}\idx{C}} = D_{xx} + \underbrace{ H_x^{-1} \left( e_{0} e_{0}^T b (\db_x - D_x) \right) }_{\mathcal{O}(h^{q-1})} - \underbrace{ H_x^{-1} \left( e_{N} e_{N}^T b (\db_x - D_x) \right) }_{\mathcal{O}(h^{q-1})} .
\end{equation}
Hence, the adapted fully compatible operators are one order less accurate than the original operators at precisely one grid point at each boundary. It is not obvious how the local reduction in accuracy affects the global convergence rate. A pessimist would expect reduction by a full order, but \cite{Duru201437} did not observe any reduction for isotropic elasticity. Our numerical experiments in Section \ref{sec:num_exp} indicate a reduction by half an order for orders $2q=4$ and $2q=6$, and no reduction for $2q=2$, for anisotropic materials.

In the following derivations we shall assume fully compatible operators. This assumption greatly simplifies the stability proofs (for a discussion on how non-fully compatible operators complicate the stability proofs for the acoustic wave equation, see \cite{AlmquistDunham2020}). In all numerical experiments we will use the adapted fully compatible operators.

\subsection{Positivity properties}
\refone{To prove stability for displacement boundary conditions and interface couplings in subsequent sections, we shall need to bound certain discrete volume integrals from below by discrete surface integrals. We refer to such bounds as \emph{positivity properties}. All positivity properties in this paper follow from the structure of the discrete quadrature $H_x$.}
It follows immediately from \eqref{eq:structure_H} and \eqref{eq:el_er} that we have
\begin{equation} \label{eq:pos_H_1d}
    H_{x} = \mbox{diag}(0,h_2, \ldots, h_2,0) + h_1 e_{0} e_{0}^T + h_1 e_{N} e_{N}^T  \geq h_1 e_{0} e_{0}^T + h_1 e_{N} e_{N}^T,
\end{equation}
or, equivalently,
\begin{equation} \label{eq:borrowing_H_1d}
   \bfu^T H_x \bfu \geq  h_1 (e_{0}^T \bfu)^2 + h_1 (e_{N}^T \bfu)^2 \quad \forall \bfu.
\end{equation}

\subsection{Multi-dimensional first-derivative operators}
Let operators with subscripts $x_i$ denote one-dimensional operators corresponding to coordinate direction $x_i$. The multi-dimensional first derivatives $D_i \approx \ddi $ are constructed using tensor products:
\begin{equation}
  \reftwo{D_i = I_{x_1} \otimes \cdots \otimes I_{x_{i-1}} \otimes D_{x_i} \otimes I_{x_{i+1}} \otimes \cdots \otimes I_{x_{d}} ,}
\end{equation}
where the $I_{x_i}$ are one-dimensional identity matrices of appropriate sizes. In analogy with the chain rule \eqref{eq:transf_chain}, we define
\begin{equation} \label{eq:chain_discrete}
    D_{\is} = \K_{\is i} D_i,
\end{equation}
where $D_{\is} \approx \ddxi$. Note that in the discrete setting, $\K_{\is i}$ is to be interpreted as a
diagonal matrix holding the grid-point values of the continuous coefficient $\K_{\is i}$ for each fixed $\il$ and $i$. Similarly, $D_i$ is a matrix for each fixed $i$. The implied summation in $\K_{\is i} D_i$ hence adds matrices in $\R^{N\times N}$, where $N$ denotes the total number of grid points, not elements of such matrices.

The multi-dimensional quadrature is
\begin{equation}
  H = H_{x_1} \otimes \cdots \otimes H_{x_d} .
\end{equation}
Let $\refboundary_{i}^-$ and $\refboundary_i^+$ denote the boundary faces where $x_i = 0$ and $x_i = 1$, respectively.
For integration over boundary faces, we define
\begin{equation}
  H_{\refboundary_i} = H_{x_1} \otimes \cdots \otimes H_{x_{i-1}} \otimes H_{x_{i+1}} \otimes \cdots \otimes H_{x_d}.
\end{equation}
Note that $H_{\refboundary_i}$ can be used to integrate over $\refboundary_i^+$ as well as $\refboundary_i^-$. For discrete integration over the volume, we define
\begin{equation}
  \dvolint{\bfu}{\bfv} = \bv{u}^T H \bv{v} . 
\end{equation}
We use the same inner product notation as in the continuous case without risk of confusion since the boldface font denotes discrete solution vectors.

Let $e_{f}^T$ denote a restriction operator that picks out only those solution values that reside on the face $f$. For discrete integration over the face $\refboundary_i^+$, for example, we write
\begin{equation}
  \dsurfintindexsign{\bv{u}}{\bv{v} }{i}{+} = (e_{\refboundary_i^+}^T \bv{u})^T H_{\refboundary_i} (e_{\refboundary_i^+}^T \bv{v}).
\end{equation}
Let $\setoffaces$ denote the set of all faces of $\refdom$,
\begin{equation} \label{eq:set_of_faces}
    \setoffaces = \{\refboundary_1^-,\ldots,\refboundary_d^-,\refboundary_1^+, \ldots, \refboundary_d^+\}.
\end{equation}
For integration over the entire boundary $\refboundary$, we define
\begin{equation}
    \dsurfint{\bfu}{\bfv} = \sum \limits_{f\in \setoffaces} \ip{\bfu}{\bfv}{f},
\end{equation}
 i.e., the integration is performed over one face at a time. If the integrand contains the unit normal or the scale factor $\surfjacobian$, their values at edges and corners are defined to be the same as on the remainder of that face.
In analogy with \eqref{eq:volint_J} and \eqref{eq:surfint_J}, we define
\begin{equation} \label{eq:discr_volint_curve}
         \dvolintphys{\bfu}{\bfv} = \dvolint{\bfu}{J \bfv}
 \end{equation}
 and
 \begin{equation} \label{eq:discr_surfint_curve}
     \dsurfintphys{\bfu}{\bfv} = \dsurfint{\bfu}{\surfjacobian \bfv}.
 \end{equation}
With the notation established in this section, we have the discrete integration-by-parts formula
\begin{equation} \label{eq:sbp_multid_mixed}
  \dvolint{\bfu }{D_i b D_j \bfv} = \dsurfint{\bfu }{\nu_i b D_j \bfv} - \dvolint{D_i \bfu }{b D_j \bfv}.
\end{equation}

\subsection{Multi-dimensional narrow-stencil second-derivative operators}
For any fixed $i$, we construct
\begin{equation}
  D_{ii}^{\idx{F}\idx{C}}(b) \approx \ddi b \ddi  \quad \mbox{(no sum over $i$)},
\end{equation}
by using the one-dimensional operator $D^{\idx{F}\idx{C}}_{xx}$ for each grid line. The multi-dimensional fully compatible SBP property for the second derivative that follows is
\begin{equation} \label{eq:sbp_multid_narrow}
\begin{aligned}
  \dvolint{\bfu }{D_{ii}^{\idx{F} \idx{C}}( b ) \bfv} &= \dsurfint{\bfu }{\nu_i b D_i \bfv} - \dvolint{D_i \bfu }{b D_i \bfv} - \bfu^T R_{ii}(b) \bfv \\
  & \quad \mbox{(no sum over $i$)},
\end{aligned}
\end{equation}
where the $R_{ii}$ matrices are multi-dimensional versions of $R_{xx}$. \reftwo{More precisely, the operator $H_{\refboundary_i}^{-1} R_{ii}(b)$ (no sum over $i$), is the operator that applies $R_{xx}(b)$ to each grid line in the $i$th coordinate direction. The $R_{ii}$ operators} inherit the symmetry and semidefiniteness-preserving properties of $R_{xx}$. In particular,
\begin{equation}
  R_{ii}(b) = R_{ii}^T(b) \quad \mbox{(no sum over $i$)}
\end{equation}
and
\begin{equation} \label{eq:semidef_R_multid}
  \bfu_{j} R_{ii}(\stiffref_{mjm\ell}) \bfu_{\ell} \geq 0 \; \forall \bfu_{j} \quad \mbox{(no sum over $i$)}.
\end{equation}

\subsection{Multi-dimensional positivity properties}
\refone{We here extend the one-dimensional positivity property \eqref{eq:borrowing_H_1d} to multiple dimensions}.
 To suppress unnecessary notation, we assume that the grid spacing in the reference domain is the same in each dimension (the analysis does not rely on this assumption). It follows from \eqref{eq:borrowing_H_1d}
that (see \cite{AlmquistDunham2020})
\begin{equation} \label{eq:borrowing_nd_H_local}
    \dvolint{\bfs_{ij}}{\stiffref_{ijk\ell} \bfs_{k\ell}} \geq h_1 \left( \ip{\bfs_{ij}}{\stiffref_{ijk\ell} \bfs_{k\ell}}{\refboundary_m^+} + \ip{\bfs_{ij}}{\stiffref_{ijk\ell} \bfs_{k\ell}}{\refboundary_m^-} \right) , 
\end{equation}
for $m = 1,\ldots,d$.
Using \eqref{eq:borrowing_nd_H_local} we can derive
\begin{equation}
\begin{aligned}
    \dvolint{\bfs_{ij}}{\stiffref_{ijk\ell} \bfs_{k\ell}} &= \frac{1}{d} \sum \limits_{m=1}^d \dvolint{\bfs_{ij}}{\stiffref_{ijk\ell} \bfs_{k\ell}} \\
    &\geq \frac{1}{d} \sum \limits_{m=1}^d h_1 \left( \ip{\bfs_{ij}}{\stiffref_{ijk\ell} \bfs_{k\ell}}{\refboundary_m^+} + \ip{\bfs_{ij}}{\stiffref_{ijk\ell} \bfs_{k\ell}}{\refboundary_m^-} \right) \\
    &= \frac{h_1}{d} \dsurfint{\bfs_{ij}}{\stiffref_{ijk\ell} \bfs_{k\ell}},
\end{aligned}
\end{equation}
which we summarize as
\begin{equation} \label{eq:borrowing_nd_H_global}
    \dvolint{\bfs_{ij}}{\stiffref_{ijk\ell} \bfs_{k\ell}} \geq \frac{h_1}{d} \dsurfint{\bfs_{ij}}{\stiffref_{ijk\ell} \bfs_{k\ell}} .
\end{equation}
Using \eqref{eq:borrowing_nd_H_global}, we can derive a similar property for integrals in the physical domain,
\begin{equation}
\begin{aligned}
  \dvolintphys{\bfs_{\is \js}}{\stiffphys_{\is \js \ks \ls} \bfs_{\ks \ls}} &= \dvolint{\bfs_{\is \js}}{J\stiffphys_{\is \js \ks \ls} \bfs_{\ks \ls}} \geq \frac{h_1}{d} \dsurfint{\bfs_{\is \js}}{J\stiffphys_{\is \js \ks \ls} \bfs_{\ks \ls}} \\
  &= \frac{h_1}{d} \dsurfintphys{\bfs_{\is \js}}{\frac{J}{\surfjacobian} \stiffphys_{\is \js \ks \ls} \bfs_{\ks \ls}},
\end{aligned}
\end{equation}
which we summarize as
\begin{equation} \label{eq:borrowing_phys}
  \dvolintphys{\bfs_{\is \js}}{\stiffphys_{\is \js \ks \ls} \bfs_{\ks \ls}} \geq \frac{h_1}{d} \dsurfintphys{\bfs_{\is \js}}{\surfjacobian^{-1} J \stiffphys_{\is \js \ks \ls} \bfs_{\ks \ls}}.
\end{equation}

\subsection{Combining narrow-stencil derivatives and mixed derivatives}
To discretize a term such as $\ddi b \ddj$, using narrow-stencil second derivatives when possible, we define the operator $\cD_{ij}$ as
\begin{equation} \label{eq:def_cd}
  \cD_{ij}(b) = \left\{ \begin{array}{cc} D^{\idx{F} \idx{C}}_{ij}(b), & i=j \\ D_i b D_j, & i\neq j \end{array} \right. .
\end{equation}
We use blackboard bold for discrete two-tensors such as $\cD_{ij}$ (where each tensor element is a square matrix).
Combining the two integration-by-parts formulas \eqref{eq:sbp_multid_mixed} and \eqref{eq:sbp_multid_narrow} leads to the integration-by-parts formula
\begin{equation} \label{eq:discr_ibp_multid_compatible}
\begin{aligned}
  &\dvolint{\bfu }{\cD_{ij}( b ) \bfv} = \\
  &\left\{
  \def\arraystretch{2.2}
  \begin{array}{ll} \displaystyle\dsurfint{\bfu }{\nu_i b D_j \bfv} - \dvolint{D_i \bfu }{b D_j \bfv}, & i\neq j \\
  \dsurfint{\bfu }{\nu_i b D_j \bfv}-\dvolint{D_i \bfu }{b D_j \bfv} - \bfu^T R_{ij}(b) \bfv, & i=j \end{array} \right. .
\end{aligned}
\end{equation}

\subsection{The discrete elastic operator}
The discrete operator that approximates $\ddi \stiffref_{ijk\ell} \ddk$
is $\cD_{ik}(\stiffref_{ijk\ell})$.
By \eqref{eq:discr_ibp_multid_compatible}, we have
\begin{equation} \label{eq:ibp_discr_operator}
\begin{aligned}
  \dvolint{\bfu_j}{\cD_{ik}( \stiffref_{ijk\ell}) \bfv_{\ell}} = &\dsurfint{\bfu_j}{\nu_i  \stiffref_{ijk\ell} D_k \bfv_{\ell}} - \dvolint{D_i \bfu_j}{ \stiffref_{ijk\ell} D_k \bfv_{\ell}} \\
  - &\sk \bfu_j^T R_{kk}(\stiffref_{kjk\ell}) \bfv_{\ell} .
\end{aligned}
\end{equation}
To simplify the notation in what follows, we define
\begin{equation}
  \Rmultid_{j\ell} = \sk R_{kk}(\stiffref_{kjk\ell}).
\end{equation}
Due to the major symmetry of $\stiffref_{ijk\ell}$ \eqref{eq:C_symmetry} and the symmetry $R_{kk} = R_{kk}^T$, we have
\begin{equation}
  \Rmultid_{j\ell} = \Rmultid_{\ell j} = \Rmultid_{j \ell}^T.
\end{equation}
By \eqref{eq:semidef_R_multid}, $\Rmultid_{j\ell}$ is positive semidefinite, i.e.,
\begin{equation}
  \bfu_j^T \Rmultid_{j\ell} \bfu_{\ell} \geq 0 \quad \forall \bfu_j.
\end{equation}
Another property that $\Rmultid_{j\ell}$ inherits from $R_{xx}$ is that it is zero to the order of accuracy in the sense that
\begin{equation} \label{eq:W_consistent}
  \bfu_j^T \Rmultid_{j\ell} \bfv_{\ell} = \mathcal{O}(h^{2q})
\end{equation}
for all $\bfu_j$, $\bfv_{\ell}$ that are restrictions of smooth functions to the grid. Thus, $\Rmultid_{j\ell}$ is a consistent approximation of the zero operator and we write $\Rmultid_{j\ell} \approx 0$.
We restate \eqref{eq:ibp_discr_operator} as
\begin{equation} \label{eq:ibp_discr_operator_clean}
\begin{aligned}
  \dvolint{\bfu_j}{\cD_{ik}(\stiffref_{ijk\ell}) \bfv_{\ell}} &= \dsurfint{\bfu_j}{\nu_i \stiffref_{ijk\ell} D_k \bfv_{\ell}}
  - \dvolint{D_i \bfu_j}{\stiffref_{ijk\ell} D_k \bfv_{\ell}} \\
  &- \bfu_j^T \Rmultid_{j\ell} \bfv_{\ell} .
\end{aligned}
\end{equation}
At this point, we introduce the following two new definitions, which extend the SBP concept to operators of the form $\ddi \stiffref_{ijk\ell} \ddk$.

\begin{defi} \label{def:sbpop}
Given a discrete inner product that approximates $\volint{\cdot}{\cdot}$ and a non-negative bilinear form that approximates $\surfint{\cdot}{\cdot}$, we say that $\cD_{ik}^{\idx{S} \idx{B} \idx{P}}(\stiffref_{ijk\ell})$ is an SBP operator for $\ddi \stiffref_{ijk\ell} \ddk$ on $\refdom$ if
\begin{equation} \label{eq:sbp_property_elastic}
\begin{aligned}
  \dvolint{\bfu_j}{\cD_{ik}^{\idx{S} \idx{B} \idx{P}}(\stiffref_{ijk\ell}) \bfv_{\ell}} &= \dsurfint{\bfu_j}{\nu_i \stiffref_{ijk\ell} \widetilde{D}_k \bfv_{\ell}} - \dvolint{D_i \bfu_j}{\stiffref_{ijk\ell} D_k \bfv_{\ell}} \\
  &- \bfu_j^T \Rmultid_{j\ell} \bfv_{\ell},
\end{aligned}
\end{equation}
where $D_i \approx \ddi$, $\widetilde{D}_i \approx \ddi$, $\Rmultid_{j\ell} = \Rmultid_{\ell j}^T \approx 0$, and $\bfu_j^T \Rmultid_{j\ell} \bfu_{\ell} \geq 0 \; \forall \bfu_j$.
\end{defi}
\begin{defi} \label{def:sbpop_fc}
An operator $\cD_{ik}^{\idx{S} \idx{B} \idx{P}}(\stiffref_{ijk\ell})$ is called a \emph{fully compatible} SBP operator for $\ddi \stiffref_{ijk\ell} \ddk$ on $\refdom$ if it satisfies \eqref{eq:sbp_property_elastic} with $\widetilde{D}_i = D_i$.
\end{defi}
The statement \eqref{eq:ibp_discr_operator_clean} shows that $\cD_{ik}(\stiffref_{ijk\ell})$, which was defined in \eqref{eq:def_cd} and is based on fully compatible one-dimensional SBP operators, is a fully compatible SBP operator for $\ddi \stiffref_{ijk\ell} \ddk$.

The following lemma shows that an SBP operator for $\ddi \stiffref_{ijk\ell} \ddk$ also mimics the formula that follows from using integration by parts twice:
\begin{equation} \label{eq:ibp2_cont}
\begin{aligned}
  \volint{u_j}{\ddi \stiffref_{ijk\ell} \ddk v_{\ell}} &= \surfint{u_j}{\nu_i \stiffref_{ijk\ell} \ddk v_{\ell}} - \surfint{\nu_i \stiffref_{ijk\ell} \ddk u_\ell}{ v_{j}} \\
  &+ \volint{\ddi \stiffref_{ijk\ell} \ddk u_{\ell}}{v_{j}}.
\end{aligned}
\end{equation}
\begin{lemma}
If $\cD_{ik}^{\idx{S} \idx{B} \idx{P}}(\stiffref_{ijk\ell})$ is an SBP operator for $\ddi \stiffref_{ijk\ell} \ddk$, then
\begin{equation}
\begin{aligned}
  \dvolint{\bfu_j}{\cD_{ik}^{\idx{S} \idx{B} \idx{P}}(\stiffref_{ijk\ell}) \bfv_{\ell}}&= \surfint{\bfu_j}{\nu_i \stiffref_{ijk\ell} \widetilde{D}_k \bfv_{\ell}} - \surfint{\nu_i \stiffref_{ijk\ell} \widetilde{D}_k \bfu_\ell}{ \bfv_{j}} \\
  &+ \volint{\cD_{ik}^{\idx{S} \idx{B} \idx{P}}(\stiffref_{ijk\ell}) \bfu_{\ell}}{\bfv_{j}}.
\end{aligned}
\end{equation}
\end{lemma}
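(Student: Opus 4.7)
The plan is to apply the SBP property of Definition \ref{def:sbpop} twice, once in its given form and once with $\bfu$ and $\bfv$ swapped, and then subtract so that the two symmetric volume contributions cancel. Concretely, Definition \ref{def:sbpop} gives
\begin{equation*}
\begin{aligned}
\dvolint{\bfu_j}{\cD_{ik}^{\idx{S}\idx{B}\idx{P}}(\stiffref_{ijk\ell}) \bfv_\ell} &= \dsurfint{\bfu_j}{\nu_i \stiffref_{ijk\ell} \widetilde D_k \bfv_\ell} - \dvolint{D_i\bfu_j}{\stiffref_{ijk\ell} D_k \bfv_\ell} - \bfu_j^T \Rmultid_{j\ell} \bfv_\ell, \\
\dvolint{\bfv_j}{\cD_{ik}^{\idx{S}\idx{B}\idx{P}}(\stiffref_{ijk\ell}) \bfu_\ell} &= \dsurfint{\bfv_j}{\nu_i \stiffref_{ijk\ell} \widetilde D_k \bfu_\ell} - \dvolint{D_i\bfv_j}{\stiffref_{ijk\ell} D_k \bfu_\ell} - \bfv_j^T \Rmultid_{j\ell} \bfu_\ell.
\end{aligned}
\end{equation*}

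The key algebraic observation is that the bulk volume term and the $\Rmultid$ term are both symmetric under the exchange $\bfu\leftrightarrow \bfv$. For the volume term, I would write the inner product as $(D_i\bfu_j)^T H\,\stiffref_{ijk\ell}(D_k\bfv_\ell)$, take the scalar transpose, and use that $H$ and the diagonal coefficient matrices $\stiffref_{ijk\ell}$ commute, together with the major symmetry $\stiffref_{ijk\ell}=\stiffref_{k\ell ij}$ from \eqref{eq:C_symmetry}; relabeling the dummy indices $i\leftrightarrow k$, $j\leftrightarrow \ell$ then yields $\dvolint{D_i\bfu_j}{\stiffref_{ijk\ell} D_k\bfv_\ell}=\dvolint{D_i\bfv_j}{\stiffref_{ijk\ell} D_k\bfu_\ell}$. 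For the remainder term, transposing the scalar $\bfv_j^T \Rmultid_{j\ell}\bfu_\ell$ and invoking $\Rmultid_{j\ell}=\Rmultid_{\ell j}^T$ from Definition \ref{def:sbpop}, followed by renaming $j\leftrightarrow \ell$, gives $\bfv_j^T \Rmultid_{j\ell}\bfu_\ell=\bfu_j^T \Rmultid_{j\ell}\bfv_\ell$.

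Subtracting the second identity from the first therefore cancels the two volume-type contributions, leaving only the boundary terms and a $-\dvolint{\bfv_j}{\cD_{ik}^{\idx{S}\idx{B}\idx{P}}(\stiffref_{ijk\ell})\bfu_\ell}$ moved to the right-hand side. Using the symmetry of the discrete inner product to rewrite this last term as $\volint{\cD_{ik}^{\idx{S}\idx{B}\idx{P}}(\stiffref_{ijk\ell})\bfu_\ell}{\bfv_j}$ gives exactly the claimed identity, which is the discrete analogue of the twice-integrated-by-parts formula \eqref{eq:ibp2_cont}.

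There is really no hard step here; the proof is essentially bookkeeping. The only subtlety is being careful with the repeated-index summation convention when relabeling dummy indices and when invoking the major symmetry of $\stiffref_{ijk\ell}$, making sure that the diagonal coefficient matrices $\stiffref_{ijk\ell}$ really do commute with $H$ (which they do because $H$ is diagonal and we treat spatially variable coefficients as diagonal matrices). Notably, the argument uses only the SBP property of Definition \ref{def:sbpop} and the stated symmetries of $\stiffref_{ijk\ell}$ and $\Rmultid_{j\ell}$, so it applies verbatim to both the fully compatible case ($\widetilde D_i = D_i$) and the general case.
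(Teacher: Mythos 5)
Your proposal is correct and follows essentially the same route as the paper: both apply the defining SBP identity to $(\bfu,\bfv)$ and to $(\bfv,\bfu)$, use the major symmetry of $\stiffref_{ijk\ell}$ and the symmetry $\Rmultid_{j\ell}=\Rmultid_{\ell j}^T$ to see that the volume and remainder terms are invariant under the exchange, and subtract so that only the boundary terms survive. Your added detail on transposing the scalar inner products and relabeling dummy indices is just a more explicit rendering of the paper's step from its restated definition to its symmetrized form.
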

\begin{proof}
By Definition \ref{def:sbpop},
\begin{equation} \label{eq:defi_restated}
\begin{aligned}
  \dvolint{\bfu_j}{\cD_{ik}^{\idx{S} \idx{B} \idx{P}}(\stiffref_{ijk\ell}) \bfv_{\ell}} &= \dsurfint{\bfu_j}{\nu_i \stiffref_{ijk\ell} \widetilde{D}_k \bfv_{\ell}} - \dvolint{D_i \bfu_j}{\stiffref_{ijk\ell} D_k \bfv_{\ell}} \\
  &- \bfu_j^T \Rmultid_{j\ell} \bfv_{\ell}.
\end{aligned}
\end{equation}
Using the symmetry of $\dvolint{\cdot}{\cdot}$ and $\dsurfint{\cdot}{\cdot}$, the major symmetry of $\stiffref_{ijk\ell}$ \eqref{eq:C_symmetry}, and $\Rmultid_{j\ell} = \Rmultid_{\ell j}^T$, we can write \eqref{eq:defi_restated} as
\begin{equation} \label{eq:defi_symm_used}
\begin{aligned}
  \dvolint{\cD_{ik}^{\idx{S} \idx{B} \idx{P}}(\stiffref_{ijk\ell}) \bfv_{\ell}}{\bfu_j} &= \dsurfint{\nu_i \stiffref_{ijk\ell} \widetilde{D}_k \bfv_{\ell}}{\bfu_j} - \dvolint{D_i \bfv_j}{\stiffref_{ijk\ell} D_k \bfu_{\ell}} \\
  &- \bfv_j^T \Rmultid_{j\ell} \bfu_{\ell}.
\end{aligned}
\end{equation}
Swapping $\bfu_j$ and $\bfv_j$ in \eqref{eq:defi_symm_used} leads to
\begin{equation} \label{eq:defi_swapped}
\begin{aligned}
  \dvolint{\cD_{ik}^{\idx{S} \idx{B} \idx{P}}(\stiffref_{ijk\ell}) \bfu_{\ell}}{\bfv_j} &= \dsurfint{\nu_i \stiffref_{ijk\ell} \widetilde{D}_k \bfu_{\ell}}{\bfv_j} - \dvolint{D_i \bfu_j}{\stiffref_{ijk\ell} D_k \bfv_{\ell}} \\
  &- \bfu_j^T \Rmultid_{j\ell} \bfv_{\ell}.
\end{aligned}
\end{equation}
Subtracting \eqref{eq:defi_swapped} from \eqref{eq:defi_restated} yields
\begin{equation}
\begin{aligned}
  \dvolint{\bfu_j}{\cD_{ik}^{\idx{S} \idx{B} \idx{P}}(\stiffref_{ijk\ell}) \bfv_{\ell}} - \dvolint{\cD_{ik}^{\idx{S} \idx{B} \idx{P}}(\stiffref_{ijk\ell}) \bfu_{\ell}}{\bfv_j} &= \dsurfint{\bfu_j}{\nu_i \stiffref_{ijk\ell} \widetilde{D}_k \bfv_{\ell}} \\
  &- \dsurfint{\nu_i \stiffref_{ijk\ell} \widetilde{D}_k \bfu_{\ell}}{\bfv_j}
\end{aligned}
\end{equation}
and the result follows after rearranging terms.
\end{proof}

We are now in position to use formula \eqref{eq:cont_transf_operator_clean} to construct an FD operator that approximates $\ddxi \stiffphys_{\is \js \ks \ls} \ddxk$. We define
\begin{equation}
  \cD^{\Omega}_{\is \ks }(\stiffphys_{\is \js \ks \ls}) := J^{-1} \cD_{ik}(\stiffref_{i \js k \ls}),
\end{equation}
\reftwo{where $\cD_{ik}$ is defined as in \eqref{eq:def_cd}, i.e., constructed from fully compatible second-derivative operators.} The main result of this section is stated in the following theorem.

\begin{theorem} \label{theorem:elastic_sbp_phys}
The operator $\cD^{\Omega}_{\is \ks }(\stiffphys_{\is \js \ks \ls}) = J^{-1} \cD_{ik}(\stiffref_{i \js k \ls})$ is a fully compatible SBP operator for $\ddxi \stiffphys_{\is \js \ks \ls} \ddxk$ on the physical domain $\physdom$.
\end{theorem}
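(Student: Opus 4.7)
The plan is to verify Definition \ref{def:sbpop_fc} directly, using the discrete physical inner products from \eqref{eq:discr_volint_curve}--\eqref{eq:discr_surfint_curve}, the discrete chain rule \eqref{eq:chain_discrete}, and Nanson's formula \eqref{eq:nanson}. The SBP identity for $\cD_{ik}$ on the reference domain, namely \eqref{eq:ibp_discr_operator_clean}, already does essentially all of the work; what remains is to translate each of its three terms into physical-domain quantities and check that the resulting $\Rmultid_{\js\ls}$ still satisfies the required symmetry/semidefiniteness/consistency hypotheses.

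First I would expand the left-hand side. Using \eqref{eq:discr_volint_curve} and the definition $\cD^{\Omega}_{\is\ks}(\stiffphys_{\is\js\ks\ls}) = J^{-1}\cD_{ik}(\stiffref_{i\js k\ls})$, the $J$ in the physical inner product cancels the $J^{-1}$ in the operator, leaving
\[
\dvolintphys{\bfu_\js}{\cD^{\Omega}_{\is\ks}(\stiffphys_{\is\js\ks\ls})\bfv_\ls}
= \dvolint{\bfu_\js}{\cD_{ik}(\stiffref_{i\js k\ls})\bfv_\ls}.
\]
Then I would apply the reference-domain SBP property \eqref{eq:ibp_discr_operator_clean} to produce a surface term, a skew volume term, and a remainder $\bfu_\js^T\Rmultid_{\js\ls}\bfv_\ls$.

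Next I would transform the surface and volume terms back to $\physdom$. For the surface term, I substitute $\stiffref_{i\js k\ls} = \K_{\is i} J \stiffphys_{\is\js\ks\ls} \K_{\ks k}$ from \eqref{eq:transf_density_stiffness} and use \eqref{eq:chain_discrete} to absorb $\K_{\ks k}D_k$ into $D_\ks$, then invoke Nanson's formula $\surfjacobian n_\is = J\K_{\is i}\nu_i$ to rewrite $\nu_i\K_{\is i}J$ as $\surfjacobian n_\is$. Combined with \eqref{eq:discr_surfint_curve}, this gives exactly $\dsurfintphys{\bfu_\js}{n_\is \stiffphys_{\is\js\ks\ls} D_\ks \bfv_\ls}$. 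For the volume term, the same substitution plus $D_\is = \K_{\is i} D_i$ applied on the left and $D_\ks = \K_{\ks k} D_k$ applied on the right converts $\dvolint{D_i\bfu_\js}{\stiffref_{i\js k\ls}D_k\bfv_\ls}$ into $\dvolintphys{D_\is\bfu_\js}{\stiffphys_{\is\js\ks\ls} D_\ks \bfv_\ls}$. Putting the three pieces together yields
\[
\begin{aligned}
\dvolintphys{\bfu_\js}{\cD^{\Omega}_{\is\ks}(\stiffphys_{\is\js\ks\ls})\bfv_\ls}
&= \dsurfintphys{\bfu_\js}{n_\is \stiffphys_{\is\js\ks\ls} D_\ks \bfv_\ls} \\
&\quad - \dvolintphys{D_\is \bfu_\js}{\stiffphys_{\is\js\ks\ls} D_\ks \bfv_\ls} - \bfu_\js^T\Rmultid_{\js\ls}\bfv_\ls,
\end{aligned}
\]
which is precisely the pattern \eqref{eq:sbp_property_elastic} with $\widetilde{D}_\is = D_\is$ and a physical-domain remainder $\Rmultid^{\Omega}_{\js\ls} := \Rmultid_{\js\ls}$.

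Finally I would check that this remainder inherits the three properties required by Definition \ref{def:sbpop}: the symmetry $\Rmultid_{\js\ls} = \Rmultid_{\ls\js}^T$, the semidefiniteness $\bfu_\js^T\Rmultid_{\js\ls}\bfu_\ls \ge 0$, and the consistency $\Rmultid_{\js\ls}\approx 0$ of order $\mathcal{O}(h^{2q})$. All three were already established for $\Rmultid_{\js\ls}$ in the reference-domain discussion preceding \eqref{eq:ibp_discr_operator_clean}, using Lemma \ref{lemma:R_two_tensor} together with the major symmetry \eqref{eq:C_symmetry} and semidefiniteness \eqref{eq:C_semidef_sum} of $\stiffref$. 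Since $\Rmultid^{\Omega}_{\js\ls}$ is literally the same matrix, nothing new is needed. The only mild subtlety, and the step I would write out most carefully, is the surface-term bookkeeping: $\K_{\is i}$, $J$, and $\surfjacobian$ are all diagonal in the discrete setting and commute with the restriction operators defining $\dsurfint{\cdot}{\cdot}$, so Nanson's formula can be pushed through the discrete face inner product pointwise. Once that is justified, the identity matches Definition \ref{def:sbpop_fc} exactly and the theorem follows.
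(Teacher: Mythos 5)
Your proposal is correct and follows essentially the same route as the paper's proof: cancel $J$ against $J^{-1}$ to reduce to the reference-domain identity \eqref{eq:ibp_discr_operator_clean}, then convert the surface term via the definition of $\stiffref_{ijk\ell}$ and Nanson's formula (the paper isolates this as \eqref{eq:n-C_conversion}) and the volume term via the discrete chain rule \eqref{eq:chain_discrete}, with $\Rmultid_{\js\ls}$ carried over unchanged. Your additional remarks on re-verifying the properties of $\Rmultid_{\js\ls}$ and on the diagonal coefficients commuting with the face restrictions are sound but add nothing beyond what the paper already established.
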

\begin{proof}
We first derive a formula that simplifies the proof of the theorem.
Using first the definition of $\stiffref_{i j k \ell}$ and then Nanson's formula \eqref{eq:nanson}, we obtain
\begin{equation} \label{eq:n-C_conversion}
  \nu_i \stiffref_{i j k \ell} = \nu_i \K_{\is i} J \stiffphys_{\is j \ks \ell} \K_{\ks k} = J^{-1} \surfjacobian n_\is J \stiffphys_{\is j \ks \ell} \K_{\ks k} = \surfjacobian n_\is \stiffphys_{\is j \ks \ell} \K_{\ks k}.
\end{equation}
We are now ready to prove the result. We have
\begin{align*}
  &\dvolintphys{\bfu_\js}{\cD^{\Omega}_{\is \ks }(\stiffphys_{\is \js \ks \ls}) \bfv_{\ls}} = \dvolintphys{\bfu_\js}{J^{-1} \cD_{ik}(\stiffref_{i \js k \ls}) \bfv_{\ls}} \\
  =&\dvolint{\bfu_\js}{\cD_{ik}(\stiffref_{i \js k \ls}) \bfv_{\ls}} \tag{use \eqref{eq:ibp_discr_operator_clean}} \\
  = &\dsurfint{ \bfu_\js}{ \nu_i \stiffref_{i \js k \ls} D_k  \bfv_{\ls}} - \dvolint{D_i  \bfu_\js}{\stiffref_{i\js k\ls} D_k  \bfv_{\ls}} - \bfu_\js^T \Rmultid_{\js\ls}  \bfv_{\ls}  \tag{use \eqref{eq:n-C_conversion}\reftwo{,\eqref{eq:transf_density_stiffness}}} \\
  = &\dsurfint{ \bfu_\js}{\surfjacobian n_\is \stiffphys_{\is \js \ks \ls} \K_{\ks k}  D_k  \bfv_{\ls} } - \dvolint{D_i  \bfu_\js}{\K_{\is i} J \stiffphys_{\is \js \ks \ls} \K_{\ks k} D_k  \bfv_{\ls}} \\
  &- \bfu_\js^T \Rmultid_{\js \ls} \bfv_{\ls}  \tag{\reftwo{use \eqref{eq:chain_discrete},\eqref{eq:discr_volint_curve},\eqref{eq:discr_surfint_curve}}} \\
  = &\dsurfintphys{\bfu_\js}{n_\is \stiffphys_{\is \js \ks \ls} D_\ks \bfv_{\ls}} - \dvolintphys{D_\is \bfu_\js}{\stiffphys_{\is \js \ks \ls} D_\ks \bfv_{\ls}} - \bfu_\js^T \Rmultid_{\js \ls} \bfv_{\ls} .
\end{align*}
\end{proof}
In analogy with the continuous traction operator $\ctractionop_{\js \ls}$ defined in \eqref{eq:tractionop_cont_def}, we define the discrete traction operator
\begin{equation} \label{eq:discr_traction_op}
  \tractionop_{\js \ls} = n_\is \stiffphys_{\is \js \ks \ls} D_\ks.
\end{equation}
The integration-by-parts formulas satisfied by $\cD^{\Omega}_{\is \ks }(\stiffphys_{\is \js \ks \ls})$ now read
\begin{equation} \label{eq:ibp_phys_clean}
\begin{aligned}
  \dvolintphys{\bfu_\js}{\cD^{\Omega}_{\is \ks }(\stiffphys_{\is \js \ks \ls}) \bfv_{\ls}} &= \dsurfintphys{\bfu_\js}{\tractionop_{\js \ls} \bfv_{\ls}} - \dvolintphys{D_\is \bfu_\js}{\stiffphys_{\is \js \ks \ls} D_\ks \bfv_{\ls}} \\
  &- \bfu_\js^T \Rmultid_{\js \ls} \bfv_{\ls}
\end{aligned}
\end{equation}
and
\begin{equation} \label{eq:ibp2_phys_clean}
\begin{aligned}
  \dvolintphys{\bfu_\js}{\cD^{\Omega}_{\is \ks }(\stiffphys_{\is \js \ks \ls}) \bfv_{\ls}} &= \dsurfintphys{\bfu_\js}{\tractionop_{\js \ls} \bfv_{\ls}} - \dsurfintphys{\tractionop_{\js \ls} \bfu_\ls}{\bfv_{\js}} \\
  &+ \dvolintphys{\cD^{\Omega}_{\is \ks }(\stiffphys_{\is \js \ks \ls}) \bfu_\ls}{\bfv_{\js}}.
\end{aligned}
\end{equation}

\section{Energy-stable and self-adjoint boundary SATs}\label{sec:bc}
We discretize the problem \eqref{eq:wave_eq_general} in space as
\begin{equation} \label{eq:semidiscrete_general}
  \rho \ddot{\bv{u}}_\js = \cD^{\physdom}_{\is \ks}(\stiffphys_{\is \js \ks \ls}) \bfu_{\ls} + \bff_\js
  + SAT_\js,
\end{equation}
where the SATs in $SAT_{\js}$ impose the boundary conditions and will be specified later. For notational convenience we assume $f_\js = 0$ in the following analysis.
Multiplying \eqref{eq:semidiscrete_general} by $\bfphi^T_\js JH$, where $\bfphi_\js$ is an arbitrary test function, leads to the equivalent weak form:
\begin{equation}
  \dvolintphys{\bfphi_\js}{\rho \ddot{\bv{u}}_\js} = \dvolintphys{\bfphi_\js}{\cD^{\Omega}_{\is \ks}(\stiffphys_{\is \js \ks \ls}) \bv{u}_{\ls}}
    + \dvolintphys{\bfphi_\js }{ SAT_\js}.
\end{equation}
After using the integration-by-parts formula \eqref{eq:ibp_phys_clean}, the weak form reads
\begin{equation}
\begin{aligned}
  \dvolintphys{\bfphi_\js}{\rho \ddot{\bv{u}}_\js}&= \dsurfintphys{\bfphi_{\js}}{\tractionop_{\js \ls} \bfu_{\ls}}
  - \dvolintphys{D_\is \bfphi_{\js}}{\stiffphys_{\is \js \ks \ls} D_\ks \bfu_{\ls}} - \bfphi_{\js}^T \Rmultid_{\js \ls} \bfu_{\ls}
    \\
    &+ \dvolintphys{\bfphi_\js }{ SAT_\js}.
\end{aligned}
\end{equation}
Define the inner product
\begin{equation}
  M(\vec{\bfphi},\vec{\bfu}) = \dvolintphys{\bfphi_\js}{\rho \bfu_\js},
\end{equation}
the symmetric positive semidefinite bilinear form
\begin{equation}
  K(\vec{\bfphi},\vec{\bfu}) = \dvolintphys{D_\is \bfphi_{\js}}{\stiffphys_{\is \js \ks \ls} D_\ks \bfu_{\ls}} + \bfphi_{\js}^T \Rmultid_{\js \ls} \bfu_{\ls},
\end{equation}
and
\begin{equation} \label{eq:boundary_form_general}
  B(\vec{\bfphi},\vec{\bfu}) = \dsurfintphys{\bfphi_{\js}}{\tractionop_{\js \ls} \bfu_{\ls}} + \dvolintphys{\bfphi_\js }{ SAT_\js}.
\end{equation}
In this paper $SAT_\js$ is always linear in $\vec{\bfu}$ and thus $B(\cdot, \cdot)$ is a bilinear form in the case of homogeneous boundary conditions.
The weak form can now be written as
\begin{equation} \label{eq:weak_single}
  M(\vec{\bfphi},\ddot{\vec{\bfu}}) + K(\vec{\bfphi},\vec{\bfu}) = B(\vec{\bfphi},\vec{\bfu}).
\end{equation}
We define the discrete energy
\begin{equation}
\begin{aligned}
 E := &\frac{1}{2}\dvolintphys{\bfut_\js}{\rho \bfut_\js} + \frac{1}{2} \dvolintphys{D_\is \bfu_\js}{\stiffphys_{\is \js \ks \ls} D_\ks \bfu_{\ls}} + \frac{1}{2}  \bfu_\js^T \Rmultid_{\js\ls} \bfu_{\ls} \\
 = &\frac{1}{2} M(\dot{\vec{\bfu}},\dot{\vec{\bfu}}) + \frac{1}{2} K(\vec{\bfu},\vec{\bfu}).
 \end{aligned}
\end{equation}
Recalling that $\Rmultid_{\js \ls}$ is zero to order $2q$, we conclude that the discrete energy $E$ approximates the continuous energy $\mathcal{E}$ defined in \eqref{eq:energy_cont}. It follows from the non-negativity of $M$ and $K$ that $E$ is a non-negative quantity.
Setting $\vec{\bfphi} = \dot{\bfu}$ in \eqref{eq:weak_single} yields the discrete energy rate
\begin{equation} \label{eq:energy_rate_discr}
  \dd{E}{t} = B(\dot{\vec{\bfu}},\vec{\bfu}).
\end{equation}
For future use we note that the integration-by-parts formula \eqref{eq:ibp_phys_clean} can be written as
\begin{equation} \label{eq:ibp_K}
  \dvolintphys{\bfphi_\js}{\cD^{\Omega}_{\is \ks}(\stiffphys_{\is \js \ks \ls}) \bv{u}_{\ls}} = \dsurfintphys{\bfphi_{\js}}{\tractionop_{\js \ls} \bfu_{\ls}} - K(\vec{\bfphi},\vec{\bfu}).
\end{equation}

\subsection{Robin boundary conditions}
Consider Robin boundary conditions,
\begin{equation} \label{eq:bc_neumann_cont}
 \ctractionop_{\js \ls} u_{\ls} + U_{\js \ls} u_{\ls} = g_\js, \quad \vec{X} \in \physboundary,
\end{equation}
where $U_{\js \ls} = U_{\ls \js}$ and $u_\js U_{\js \ls} u_\ls \geq 0 \; \forall u_{\js}$. Robin conditions include the important case of traction conditions, obtained by setting $U_{\js \ls} = 0$ in \eqref{eq:bc_neumann_cont}.
It follows from \eqref{eq:energy_rate_phys} that, for $g_\js = 0$, the continuous solution satisfies the energy balance
\begin{equation} \label{eq:cont_eb_robin}
  \dd{\widetilde{\mathcal{E}}}{t} = 0,
\end{equation}
where
\begin{equation}
  \widetilde{\mathcal{E}} = \mathcal{E} + \frac{1}{2} \dsurfintphys{u_\js}{ U_{\js \ls} u_{\ls} }.
\end{equation}

If $SAT_\js$ satisfies
\begin{equation} \label{eq:energy_preserving_sat}
  \dvolintphys{\bfphi_\js }{ SAT_\js} = -\dsurfintphys{\bfphi_\js}{\tractionop_{\js \ls} \bfu_\ls + U_{\js \ls} \bfu_\ls - \boldsymbol{g}_\js},
\end{equation}
then, for $g_\js=0$, we obtain
\begin{equation} \label{eq:B_traction}
\begin{aligned}
  B(\vec{\bfphi},\vec{\bfu}) &= \dsurfintphys{\bfphi_{\js}}{\tractionop_{\js \ls} \bfu_{\ls}} -\dsurfintphys{\bfphi_\js}{\tractionop_{\js \ls} \bfu_\ls + U_{\js \ls} \bfu_\ls } \\
  &= -\dsurfintphys{\bfphi_\js}{U_{\js \ls} \bfu_\ls},
\end{aligned}
\end{equation}
which is a symmetric bilinear form. It follows that
\begin{equation}
  B(\dot{\vec{\bfu}},\vec{\bfu}) = -\frac{1}{2} \dd{}{t} \dsurfintphys{\bfu_\js}{ U_{\js \ls} \bfu_{\ls} } ,
\end{equation}
which yields the energy balance
\begin{equation} \label{eq:discr_eb_robin}
\dd{}{t}\widetilde{E} = 0,
\end{equation}
where
\begin{equation} \label{eq:energy_robin}
  \widetilde{E} = E + \frac{1}{2} \dsurfintphys{\bfu_\js}{ U_{\js \ls} \bfu_{\ls} } \geq 0,
\end{equation}
which shows that the scheme is energy stable.
We achieve \eqref{eq:energy_preserving_sat} by setting
\begin{equation} \label{eq:sat_traction}
  SAT_\js = -(JH)^{-1} \sum \limits_{f\in \setoffaces} e_f \surfjacobian H_f \left( e_f^T \left( \tractionop_{\js \ls} \bfu_\ls + U_{\js \ls}\bfu_\ls \right)- \boldsymbol{g}_\js \right) ,
\end{equation}
where $\setoffaces$ denotes the set of all faces and was defined in \eqref{eq:set_of_faces}. The SAT \eqref{eq:sat_traction} is the standard SAT for Robin boundary conditions, see \cite{Duru2014}.

\begin{remark}
Robin boundary conditions can be generalized by introducing an additional term $V_{\js \ls} \dot{u}_{\ls}$, where $u_{\js} V_{\js \ls} u_{\ls} \geq 0 \; \forall u_\js$, on the left-hand side of \eqref{eq:bc_neumann_cont}. This term introduces energy dissipation in the continuous problem. It is straightforward to generalize the SAT \eqref{eq:energy_preserving_sat} to such BC and obtain corresponding dissipation of discrete energy, see \cite{Duru2014}. To streamline the discussion of self-adjointness, however, we restrict our attention to Robin and displacement conditions in this paper.
\end{remark}

\subsection{Displacement boundary conditions}
We now consider displacement conditions,
\begin{equation} \label{eq:bc_dirichlet_cont}
  u_\js = g_\js, \quad \vec{X} \in \physboundary.
\end{equation}
The homogeneous conditions obtained by setting $g_\js=0$ are energy-conserving for the continuous equations. However, there are no consistent SATs that make $B(\vec{\bfphi},\vec{\bfu})$ vanish \refthree{(it is clear from e.g.\ \eqref{eq:boundary_form_general} that the unique SAT that makes $B$ vanish is \eqref{eq:sat_traction} with $U_{\js\ls} = 0$, $\boldsymbol{g}_\js=0$, which is consistent with homogeneous traction conditions)}. Instead, we shall choose SATs that symmetrize the form $B(\cdot,\cdot)$. Suppose that
\begin{equation} \label{eq:sat_displ_weak}
  \dvolintphys{\bfphi_\js }{ SAT_\js} = \dsurfintphys{\tractionop_{\ls \js} \bfphi_{\js}}{\bfu_{\ls} - \boldsymbol{g}_\ls} - \dsurfintphys{\satsym_{\ls \js} \bfphi_{\js}}{\bfu_{\ls} - \boldsymbol{g}_\ls},
\end{equation}
for some yet unspecified $\satsym_{\js \ls}$ that is symmetric with respect to the boundary quadrature in the sense that
\begin{equation} \label{eq:sat_symmetry_assumption}
  \dsurfintphys{\satsym_{\ls \js} \cdot }{ \cdot} = \dsurfintphys{\cdot }{ \satsym_{\js \ls} \cdot} .
\end{equation}
Then, for $g_\js = 0$, we obtain
\begin{equation} \label{eq:B_sym_displacement}
  B(\vec{\bfphi},\vec{\bfu}) = \dsurfintphys{\bfphi_{\js}}{\tractionop_{\js \ls} \bfu_{\ls}} + \dsurfintphys{\tractionop_{\ls \js} \bfphi_{\js}}{\bfu_{\ls}} - \dsurfintphys{\bfphi_{\js}}{\satsym_{\js \ls} \bfu_{\ls}},
\end{equation}
which is a symmetric bilinear form. It follows that
\begin{equation}
  \dd{E}{t} = B(\dot{\vec{\bfu}},\vec{\bfu}) = \frac{1}{2} \dd{}{t} B(\vec{\bfu},\vec{\bfu}).
\end{equation}
We obtain the energy balance
\begin{equation}
\dd{E_d}{t} = 0,
\end{equation}
where the modified energy $E_d$ is
\begin{equation} \label{eq:e_d}
  E_d = E - \frac{1}{2} B(\vec{\bfu},\vec{\bfu} ) = E - \dsurfintphys{\bfu_\js}{\tractionop_{\js \ls} \bfu_\ls } + \frac{1}{2} \dsurfintphys{\bfu_\js }{\satsym_{\js \ls} \bfu_\ls}.
\end{equation}
Note that $E_d$, just like $E$, is a high-order approximation of the continuous energy $\mathcal{E}$ because $B(\vec{\bfu},\vec{\bfu})$ is zero to the order of accuracy due to the boundary condition.

The SAT that satisfies \eqref{eq:sat_displ_weak} is
\begin{equation} \label{eq:dirichlet_ansatz}
  SAT_\js = (JH)^{-1} \sum \limits_{f \in \setoffaces} \left(\tractionop_{\ls \js} - \satsym_{\ls \js} \right)^T e_{f} \surfjacobian H_{f} (e_{f}^T \bfu_\ls - \boldsymbol{g}_\ls),
\end{equation}
where $\satsym_{\ls \js}$ remains unspecified at this point. The ansatz \eqref{eq:dirichlet_ansatz} ensures that the SATs are consistent with displacement boundary conditions. For fixed $\jl$ and $\elll$, $\satsym_{\ls \js}$ is an $N\times N$ matrix with units of force per unit volume. For $SAT_\js$ to have the same $h$-dependence as $\cD_{\is \ks}^{\Omega}(\stiffphys_{\is \js \ks \ls})$, which is a second derivative and hence scales as $h^{-2}$, the entries of $\satsym_{\ls \js}$ must be proportional to $h^{-1}$. Because the boundary quadrature operator is diagonal, the condition \eqref{eq:sat_symmetry_assumption} is satisfied if $\satsym_{\js \ls} = \satsym_{\ls \js}$ and $\satsym_{\js \ls}$ is diagonal for each $\jl$ and $\elll$.

To prove stability, it remains to prove that we can choose $\satsym_{\ls \js}$ so that $E_d$ is a non-negative quantity. To accomplish this, we use the positivity of $E$. Since the indefinite term in $E_d$ is a surface integral, we bound $E$ from below by a surface integral. We have
\begin{equation} \label{eq:borrowing_H_multiD}
\begin{aligned}
  2E &= \dvolintphys{\bfut_\js}{\rho \bfut_\js} + \dvolintphys{D_\is \bfu_\js}{\stiffphys_{\is \js \ks \ls} D_\ks \bfu_{\ls}} + \bfu_\js^T \Rmultid_{\js\ls} \bfu_{\ls} \\
  &\geq \frac{h_1}{d} \dsurfintphys{D_\is \bfu_\js}{\frac{J}{\surfjacobian} \stiffphys_{\is \js \ks \ls} D_\ks \bfu_{\ls}},
  \end{aligned}
 \end{equation}
where we used the positivity property \eqref{eq:borrowing_phys} in the last step.
Using \eqref{eq:borrowing_H_multiD} in \eqref{eq:e_d} yields
\begin{equation}
\begin{aligned}
2E_d &\geq \frac{h_1}{d} \dsurfintphys{D_\is \bfu_\js}{\surfjacobian^{-1} J \stiffphys_{\is \js \ks \ls} D_\ks \bfu_{\ls}} - 2 \dsurfintphys{\bfu_\js}{\tractionop_{\js \ls} \bfu_\ls} \\
&+ \dsurfintphys{\bfu_\js }{\satsym_{\js \ls} \bfu_\ls} .
\end{aligned}
\end{equation}
Recalling the definition of $\tractionop_{\js \ls}$ \eqref{eq:discr_traction_op} lets us write
\begin{equation} \label{eq:indefinite_term_expanded}
  \dsurfintphys{\bfu_\js}{\tractionop_{\js \ls} \bfu_\ls} =
    \dsurfintphys{\bfu_\js}{n_\is \stiffphys_{\is \js \ks \ls} D_\ks \bfu_{\ls}} = \dsurfintphys{n_\is \bfu_\js}{\stiffphys_{\is \js \ks \ls} D_\ks \bfu_{\ls}}.
\end{equation}
Due to the major symmetry of $\stiffphys_{\is \js \ks \ls}$ \eqref{eq:C_symmetry}, we have
\begin{equation}
  \dsurfintphys{n_\is \bfu_\js}{\stiffphys_{\is \js \ks \ls} D_\ks \bfu_{\ls}} = \dsurfintphys{n_\ks \bfu_\ls}{\stiffphys_{\is \js \ks \ls} D_\is \bfu_{\js}}.
\end{equation}
By completing the squares, we obtain
\begin{equation}
\begin{aligned}
    2E_d &\geq \frac{h_1}{d} \dsurfintphys{D_\is \bfu_\js}{\surfjacobian^{-1} J \stiffphys_{\is \js \ks \ls} D_\ks \bfu_{\ls}} - 2 \dsurfintphys{n_\is \bfu_\js}{\stiffphys_{\is \js \ks \ls} D_\ks \bfu_{\ls}} \\
    &+ \dsurfintphys{\bfu_\js }{\satsym_{\js \ls} \bfu_\ls} \\
    &= \frac{h_1}{d} \dsurfintphys{D_\is \bfu_{\js} - \frac{d\surfjacobian}{h_1 J} n_\is \bfu_\js}{\frac{J}{\surfjacobian} \stiffphys_{\is \js \ks \ls} \left( D_\ks \bfu_{\ls} - \frac{d \surfjacobian}{h_1 J} n_\ks \bfu_{\ls} \right) } \\
    &- \dsurfintphys{n_\is \bfu_\js}{ \frac{d \surfjacobian}{h_1 J} \stiffphys_{\is \js \ks \ls} n_\ks \bfu_{\ls} } + \dsurfintphys{\bfu_\js }{\satsym_{\js \ls} \bfu_{\ls}} \\
    &\geq \dsurfintphys{\bfu_\js }{ \left(\satsym_{\js\ls} - \frac{d \surfjacobian}{h_1 J} n_\is \stiffphys_{\is \js \ks \ls} n_\ks \right) \bfu_{\ls} }.
\end{aligned}
\end{equation}
We achieve $E_d \geq 0$ by setting
\begin{equation}
    \satsym_{\js \ls} = \beta \frac{d \surfjacobian}{h_1 J} n_\is \stiffphys_{\is \js \ks \ls} n_\ks, \quad \beta \geq 1.
\end{equation}
Since $\surfjacobian$, $J$, $n_\is$, and $\stiffphys_{\is \js \ks \ls}$ are diagonal matrices in the discrete case, the $\satsym_{\js \ls}$ are diagonal matrices. Using the major symmetry of $\stiffphys_{\is \js \ks \ls}$
\eqref{eq:symmetry_phys}, we have
\begin{equation}
  \satsym_{\js \ls} = \beta \frac{d \surfjacobian}{h_1 J} n_\is  \stiffphys_{\is \js \ks \ls} n_\ks = \beta \frac{d \surfjacobian}{h_1 J} n_\is  \stiffphys_{\ks \ls \is \js} n_\ks = \satsym_{\ls \js},
\end{equation}
which verifies that $\satsym_{\js \ls}$ satisfies the symmetry assumption \eqref{eq:sat_symmetry_assumption}.
We have now proven the following theorem.
\begin{theorem} \label{theorem:dirichlet}
The scheme
\begin{equation}
  \rho \ddot{\bfu}_\js = \cD^{\physdom}_{\is \ks}(\stiffphys_{\is \js \ks \ls}) \bfu_{\ls} + (JH)^{-1} \sum \limits_{f \in \setoffaces} \left(\tractionop_{\ls \js} - \satsym_{\ls \js} \right)^T e_{f} \surfjacobian H_{f} (e_{f}^T \bfu_\ls - \boldsymbol{g}_\ls),
\end{equation}
with
\begin{equation}
  \satsym_{\js \ls} = \beta \frac{d \surfjacobian}{h_1 J} n_\is \stiffphys_{\is \js \ks \ls} n_{\ks}
\end{equation}
is stable if $\beta \geq 1$.

\end{theorem}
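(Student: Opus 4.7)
The plan is to prove stability by constructing a modified discrete energy $E_d$ that is exactly conserved in time and bounded below by zero. Most of the machinery is already assembled in Section \ref{sec:bc}; the theorem is essentially the end-point of that development, so I would organize the proof around the chain \emph{weak form} $\to$ \emph{energy conservation} $\to$ \emph{positivity}.

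First, I would recast the semidiscrete scheme in the weak form \eqref{eq:weak_single} by multiplying by $\bfphi_\js^T JH$ and applying the SBP identity \eqref{eq:ibp_phys_clean}. Substituting the displacement SAT \eqref{eq:dirichlet_ansatz} with the prescribed $\satsym_{\js\ls} = \beta (d\surfjacobian/(h_1 J)) n_\is \stiffphys_{\is\js\ks\ls} n_\ks$ into the boundary form $B$, I would verify that $\satsym_{\js\ls}$ satisfies the symmetry assumption \eqref{eq:sat_symmetry_assumption}: because $\surfjacobian$, $J$, $n_\is$, and $\stiffphys_{\is\js\ks\ls}$ are diagonal on the grid, $\satsym_{\js\ls}$ is itself diagonal for each fixed $\jl,\elll$, and the major symmetry \eqref{eq:symmetry_phys} gives $\satsym_{\js\ls} = \satsym_{\ls\js}$. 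Hence $B$ is symmetric, and setting $\vec{\bfphi} = \dot{\vec{\bfu}}$ in \eqref{eq:weak_single} yields $B(\dot{\vec{\bfu}},\vec{\bfu}) = \tfrac{1}{2}\tfrac{d}{dt}B(\vec{\bfu},\vec{\bfu})$, which gives conservation of $E_d := E - \tfrac{1}{2}B(\vec{\bfu},\vec{\bfu})$ as defined in \eqref{eq:e_d}.

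The heart of the proof is showing $E_d \geq 0$. The indefinite term in $E_d$ is the surface integral $\dsurfintphys{\bfu_\js}{\tractionop_{\js\ls}\bfu_\ls}$, which is a boundary quantity, whereas the strain-energy term in $E$ is a volume integral. The key tool is the positivity property \eqref{eq:borrowing_phys}, which lets me borrow a fraction $h_1/d$ of the volume strain energy as a boundary integral in $D_\is \bfu_\js$. I would then use the major symmetry \eqref{eq:symmetry_phys} to write the cross term symmetrically in $\is,\js$ and $\ks,\ls$, and complete the square with the ansatz $D_\is \bfu_\js - (d\surfjacobian/(h_1 J)) n_\is \bfu_\js$ weighted by $\stiffphys_{\is\js\ks\ls}$. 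The perfect-square remainder is non-negative by the semidefiniteness of $\stiffphys$ in \eqref{eq:definite_phys}, so I am left with
\begin{equation*}
2E_d \geq \dsurfintphys{\bfu_\js}{\Bigl(\satsym_{\js\ls} - \tfrac{d\surfjacobian}{h_1 J} n_\is \stiffphys_{\is\js\ks\ls} n_\ks\Bigr)\bfu_\ls},
\end{equation*}
which is non-negative precisely when $\beta \geq 1$, again by \eqref{eq:definite_phys}.

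The main obstacle is the completion-of-squares step, since it involves a vector-valued quadratic form with four indices and the right groupings only emerge after invoking the major symmetry. Everything else is a mechanical assembly of previously established ingredients: the SBP identity \eqref{eq:ibp_phys_clean}, the symmetrizing design of \eqref{eq:dirichlet_ansatz}, and the borrowing lemma \eqref{eq:borrowing_phys}. A minor bookkeeping point is that $E_d$ still approximates the continuous energy $\mathcal{E}$ to the order of accuracy because $B(\vec{\bfu},\vec{\bfu})$ vanishes to that order whenever the displacement condition is (weakly) satisfied; this should be noted but does not enter the stability argument itself.
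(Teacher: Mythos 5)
Your proposal is correct and follows essentially the same route as the paper's own derivation: weak form, symmetrization of $B(\cdot,\cdot)$ via the SAT ansatz \eqref{eq:dirichlet_ansatz}, conservation of $E_d$, and then positivity of $E_d$ by borrowing a fraction $h_1/d$ of the strain energy through \eqref{eq:borrowing_phys} and completing the square with exactly the grouping $D_\is \bfu_\js - \frac{d\surfjacobian}{h_1 J} n_\is \bfu_\js$. The verification of the symmetry condition \eqref{eq:sat_symmetry_assumption} and the closing appeal to \eqref{eq:definite_phys} also match the paper.
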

In all simulations in this paper, we set $\beta = 1$, i.e., right on the limit of provable stability. The drawback of using larger values of $\beta$ is that this increases the spectral radius of the operator.

\reftwo{
\begin{remark}
Note that the analysis in this subsection would become significantly more involved without the assumption of fully compatible SBP operators.
Due to full compatibility, the positivity property
\begin{equation} \label{eq:borrowing_old}
  \dvolintphys{D_\is \bfu_\js}{\stiffphys_{\is \js \ks \ls} D_\ks \bfu_{\ls}} \geq \frac{h_1}{d} \dsurfintphys{D_\is \bfu_\js}{\frac{J}{\surfjacobian} \stiffphys_{\is \js \ks \ls} D_\ks \bfu_{\ls}}
\end{equation}
is sufficient to prove that $E_d \geq 0$. Without full compatibility, however, the discrete traction operator takes the form
\begin{equation}
  \tractionop_{\js \ls} = n_\is C_{\is \js \ks \ls} (D_\ks + \Delta D_\ks),
\end{equation}
where $\Delta D_\ks$ denotes the difference between compatible and fully compatible boundary derivatives and is of order $q$. Proving that $E_d \geq 0$ by completing the squares then requires, in addition to \eqref{eq:borrowing_old}, at least one of the following two positivity properties:
\begin{align}
\label{eq:borrowing_1}
  \dvolintphys{D_\is \bfu_\js}{\stiffphys_{\is \js \ks \ls} D_\ks \bfu_{\ls}} &\geq  \dsurfintphys{\Delta D_\is \bfu_\js}{\alpha C_{\is \js \ks \ls} \Delta D_\ks \bfu_{\ls}}  \quad \mbox{for some } \alpha > 0, \\
  \label{eq:borrowing_2}
  \bfu_\js^T \Rmultid_{\js\ls} \bfu_{\ls} &\geq \dsurfintphys{\Delta D_\is \bfu_\js}{\beta C_{\is \js \ks \ls} \Delta D_\ks \bfu_{\ls}} \quad \mbox{for some } \beta > 0.
\end{align}
Our numerical tests (not reported here) reveal that \eqref{eq:borrowing_1} does not hold for the operators derived in \cite{Mattsson11}. Thus, the only possibility appears to be to prove \eqref{eq:borrowing_2}. When solving the scalar wave equation,  \cite{AlmquistDunham2020} proved a simpler version of \eqref{eq:borrowing_2}. Extending that proof to the elastic operator remains an open problem. For the idealized case of constant material properties and affine coordinate transformations, \cite{Duru201437} claimed that \eqref{eq:borrowing_2} holds, but did not show a complete proof for multi-dimensional settings.

Not assuming full compatibility would complicate the interface analysis in Section \ref{sec:interfaces} in the same manner.
\end{remark}
}

\subsection{Self-adjointness}
The adjoint of the discrete operator plays an important role in PDE-constrained optimization problems such as seismic imaging, where the adjoint state method is frequently used to compute the gradient of the objective functional. The continuous elastic operator is self-adjoint, and this subsection is devoted to proving that the discrete elastic operator is also self-adjoint. A consequence of this property is that one may use the same solver for the forward and adjoint PDEs and still obtain the exact (up to roundoff error) gradient of a discrete objective functional (provided that the time-discretization is also adjoint-consistent).

Let $\mathcal{U}$ and $\Phi$ be subsets of $L^2(\physdom)$. We think of $\mathcal{U}$ as the primal space and $\Phi$ as the dual or adjoint space. The adjoint $\mathcal{L}_{\js \ls}^{\dagger}: \Phi \rightarrow L^2(\physdom) $ of a linear operator $\mathcal{L}_{\js \ls}: \mathcal{U} \rightarrow L^2(\physdom)$ satisfies
\begin{equation}
  \volintphys{\phi_{\js}}{\mathcal{L}_{\js \ls} u_{\ls}} = \volintphys{\mathcal{L}^{\dagger}_{\js \ls} \phi_{\ls}}{u_{\js}} \; \forall  u_{\js} \in \mathcal{U}, \; \phi_{\js} \in \Phi.
\end{equation}
The operator $\mathcal{L}_{\js \ls}$ is said to be self-adjoint if $\mathcal{L}^{\dagger}_{\js \ls} = \mathcal{L}_{\js \ls}$, which implies that $\Phi = \mathcal{U}$ \cite{Rudin1973}.

We here consider the elastic operator $\mathcal{D}_{\js \ls} = \ddxi \stiffphys_{\is \js \ks \ls} \ddxk$. For now, we leave the domain of $\mathcal{D}_{\js \ls}$ unspecified.
We define the space of admissible functions
\begin{equation}
  \mathcal{U} = \{ u_\js \in L^2(\physdom) \; | \; \mathcal{D}_{\js \ls} u_{\ls} \in L^2(\Omega) \} .
\end{equation}
We further assume that $u_\js$ satisfies either Robin boundary conditions \eqref{eq:bc_neumann_cont} or displacement boundary conditions \eqref{eq:bc_dirichlet_cont}.
Let $\mathcal{U}_R$ and $\mathcal{U}_D$ denote the corresponding spaces:
\begin{equation}
\begin{aligned}
\mathcal{U}_R &= \{ u_{\js} \in \mathcal{U} \; | \; T_{\js \ls} u_{\ls} + U_{\js \ls} u_\ls = 0 \mbox{ on } \physboundary  \}, \\
\mathcal{U}_D &= \{ u_{\js} \in \mathcal{U} \; | \; u_{\js} = 0 \mbox{ on } \physboundary  \}.
\end{aligned}
\end{equation}
Two partial integrations yield (cf.\ \eqref{eq:ibp2_cont})
\begin{equation}
\begin{aligned}
  \volintphys{\phi_\js}{\mathcal{D}_{\js \ls} u_{\ls}} = \surfintphys{\phi_{\js}}{\ctractionop_{\js \ls} u_{\ls}} - \surfintphys{\ctractionop_{\ls \js} \phi_{\js}}{u_{\ls}} + \volintphys{\mathcal{D}_{\js \ls} \phi_{\ls}}{u_{\js}}.
\end{aligned}
\end{equation}
It follows that
\begin{equation}
  \volintphys{\phi_\js}{\mathcal{D}_{\js \ls} u_{\ls}} = \volintphys{\mathcal{D}_{\js \ls} \phi_{\ls}}{u_{\js}} \; \forall  u_{\js} \in \mathcal{U}_R, \; \phi_{\js} \in \mathcal{U}_R
\end{equation}
and
\begin{equation}
  \volintphys{\phi_\js}{\mathcal{D}_{\js \ls} u_{\ls}} = \volintphys{\mathcal{D}_{\js \ls} \phi_{\ls}}{u_{\js}} \; \forall  u_{\js} \in \mathcal{U}_D, \; \phi_{\js} \in \mathcal{U}_D,
\end{equation}
which shows that $\mathcal{D}_{\js \ls}$ is self-adjoint both with domain $\mathcal{U}_R$ (Robin conditions) and with domain $\mathcal{U}_D$ (displacement conditions).

We now consider the total discrete elastic operator, including SATs for Robin or displacement boundary conditions. Assuming  homogeneous boundary conditions, we can define $\mathbb{S}_{\js \ls}$ such that
\begin{equation}
  SAT_\js = \mathbb{S}_{\js \ls} \bfu_\ls,
\end{equation}
and the total discrete elastic operator is
\begin{equation}
  \cD^{tot}_{\js \ls} = \cD^{\physdom}_{\is \ks}(\stiffphys_{\is \js \ks \ls}) + \mathbb{S}_{\js \ls}.
\end{equation}

\begin{theorem} \label{thm:selfadjoint_boundary}
The total discrete elastic operator, including SATs for Robin or displacement boundary conditions, is self-adjoint, i.e.,
\begin{equation}
  \dvolintphys{\bfphi_\js}{\cD_{\js \ls}^{tot}\bfu_{\ls}} = \dvolintphys{\cD_{\js \ls}^{tot}\bfphi_\ls}{\bfu_{\js}} \quad \forall \bfphi_\js , \bfu_\js .
\end{equation}
\end{theorem}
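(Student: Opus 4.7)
The plan is to reduce the claim to two symmetry properties that are already in place, namely symmetry of $K(\cdot,\cdot)$ and symmetry of $B(\cdot,\cdot)$. First I would use the integration-by-parts formula \eqref{eq:ibp_phys_clean} in the equivalent form \eqref{eq:ibp_K} to rewrite
\begin{equation*}
  \dvolintphys{\bfphi_\js}{\cD^{\Omega}_{\is \ks}(\stiffphys_{\is \js \ks \ls}) \bfu_{\ls}} = \dsurfintphys{\bfphi_{\js}}{\tractionop_{\js \ls} \bfu_{\ls}} - K(\vec{\bfphi},\vec{\bfu}).
\end{equation*}
Adding the SAT contribution and invoking the definition of $B$ in \eqref{eq:boundary_form_general} then gives the compact identity
\begin{equation*}
  \dvolintphys{\bfphi_\js}{\cD^{tot}_{\js \ls} \bfu_{\ls}} = -K(\vec{\bfphi},\vec{\bfu}) + B(\vec{\bfphi},\vec{\bfu}),
\end{equation*}
which holds for any $\vec{\bfphi},\vec{\bfu}$ and any of the SATs considered in this section.

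Next I would verify that both forms on the right-hand side are symmetric in their arguments. Symmetry of $K$ is immediate: the first contribution is symmetric because of the major symmetry of $\stiffphys_{\is \js \ks \ls}$ in \eqref{eq:C_symmetry}, and the second is symmetric because $\Rmultid_{\js \ls} = \Rmultid_{\ls \js}^T$. For homogeneous Robin conditions, the SAT \eqref{eq:sat_traction} collapses $B$ to $-\dsurfintphys{\bfphi_\js}{U_{\js\ls}\bfu_\ls}$ as in \eqref{eq:B_traction}, which is symmetric by the assumption $U_{\js\ls} = U_{\ls\js}$. For homogeneous displacement conditions, the SAT \eqref{eq:dirichlet_ansatz} yields the expression \eqref{eq:B_sym_displacement}; its first two surface terms are manifestly the transpose of one another (swap the dummy indices $\js \leftrightarrow \ls$ and use symmetry of $\dsurfintphys{\cdot}{\cdot}$), while symmetry of the third term is precisely the assumption \eqref{eq:sat_symmetry_assumption} verified at the end of that subsection via $\satsym_{\js\ls}=\satsym_{\ls\js}$.

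Combining these observations, and using symmetry of the physical inner product $\dvolintphys{\cdot}{\cdot}$, the conclusion follows from the chain
\begin{equation*}
  \dvolintphys{\bfphi_\js}{\cD^{tot}_{\js \ls} \bfu_{\ls}} = -K(\vec{\bfphi},\vec{\bfu}) + B(\vec{\bfphi},\vec{\bfu}) = -K(\vec{\bfu},\vec{\bfphi}) + B(\vec{\bfu},\vec{\bfphi}) = \dvolintphys{\cD^{tot}_{\js \ls} \bfphi_{\ls}}{\bfu_{\js}}.
\end{equation*}
There is really no hard step here; the theorem is essentially a repackaging of the symmetry design that motivated the SAT choices \eqref{eq:sat_traction} and \eqref{eq:dirichlet_ansatz} in the first place. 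The one thing to be explicit about is that the statement presupposes homogeneous boundary data, so that $SAT_\js$ is truly linear in $\vec{\bfu}$ and $B(\cdot,\cdot)$ is a bilinear form; this is exactly the setting declared just before the definition of $\mathbb{S}_{\js\ls}$. The only subtlety worth flagging to the reader is that full compatibility of the underlying SBP operators is what makes the traction operator appear symmetrically in both \eqref{eq:ibp_K} and \eqref{eq:B_sym_displacement}; without it, extra non-symmetric boundary-derivative terms would survive and the cancellation above would fail.
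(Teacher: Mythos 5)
Your proposal is correct and follows essentially the same route as the paper: both rewrite $\dvolintphys{\bfphi_\js}{\cD^{tot}_{\js\ls}\bfu_{\ls}}$ as $-K(\vec{\bfphi},\vec{\bfu}) + B(\vec{\bfphi},\vec{\bfu})$ via the integration-by-parts identity \eqref{eq:ibp_K}, then conclude from the symmetry of $K$ and of $B$ (the latter established in \eqref{eq:B_traction} and \eqref{eq:B_sym_displacement}) together with the symmetry of the discrete inner product. Your added remarks on homogeneous data and full compatibility are consistent with the paper's setup and do not change the argument.
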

\begin{proof}
In deriving the weak form \eqref{eq:weak_single}, we showed that
\begin{equation}
  \dvolintphys{\bfphi_\js}{\cD_{\js \ls}^{tot}\bfu_{\ls}} = -K(\vec{\bfphi},\vec{\bfu}) + B(\vec{\bfphi},\vec{\bfu}),
\end{equation}
where $K$ is symmetric and $B$ is symmetric both in the case of Robin conditions (cf.\ \eqref{eq:B_traction}) and in the case of displacement conditions (cf.\ \eqref{eq:B_sym_displacement}). Hence, we have
\begin{equation}
\begin{aligned}
  \dvolintphys{\bfphi_\js}{\cD_{\js \ls}^{tot}\bfu_{\ls}} &= -K(\vec{\bfphi},\vec{\bfu}) + B(\vec{\bfphi},\vec{\bfu}) = -K(\vec{\bfu},\vec{\bfphi}) + B(\vec{\bfu},\vec{\bfphi}) \\
  &= \dvolintphys{\bfu_{\js}}{\cD_{\js \ls}^{tot}\bfphi_\ls} .
\end{aligned}
\end{equation}
After using the symmetry of $\dvolintphys{\cdot}{\cdot}$, the result follows.
\end{proof}

\section{Energy-stable and self-adjoint interface SATs} \label{sec:interfaces}
We may want to introduce multiple grid blocks to: handle discontinuous material parameters $\rho$ and $\stiffphys_{\idx{IJKL}}$, facilitate grid generation, or model earthquakes or fractures, in which case there are prescribed discontinuities in either displacement or traction. The discussion below covers all cases. The Jacobian $J$ and transformation gradient $\K_{\is i}$ may be discontinuous across the interface.

Let $\physinterface$ denote the interface between two domains $\physdom_u$ and $\physdom_v$. We use superscripts $u$ and $v$ to distinguish between quantities that correspond to the two different sides of the interface. We consider the problem
\begin{equation} \label{eq:interface_phys}
\begin{array}{rl}
  \rho^u \ddot{u}_\js - \ddxi \stiffphys_{\idx{IJKL}}^u \ddxk u_{\ls} = 0, & \vec{X} \in \physdom_u, \\
  \rho^v \ddot{v}_\js - \ddxi \stiffphys_{\idx{IJKL}}^v \ddxk v_{\ls} = 0, & \vec{X} \in \physdom_v, \\
  u_\js - v_\js = V_\js, & \vec{X} \in \physinterface, \\
  \tau_\js^u + \tau_\js^v = \Theta_\js, & \vec{X} \in \physinterface, \\
\end{array}
\end{equation}
augmented with suitable boundary conditions. The functions $V_\js$ and $\Theta_\js$ denote data for jumps in displacement and traction, respectively.
Define the energies
\begin{equation}
\begin{aligned}
    \mathcal{E}_u &= \frac{1}{2} \volintphysu{\dot{u}_\js}{\rho^u \dot{u}_\js} + \frac{1}{2} \volintphysu{ \ddxi u_\js}{\stiffphys_{\idx{IJKL}}^u \ddxk u_{\ls}}, \\
    \mathcal{E}_v &= \frac{1}{2} \volintphysv{\dot{v}_\js}{\rho^v \dot{v}_\js} + \frac{1}{2} \volintphysv{ \ddxi v_\js}{\stiffphys_{\idx{IJKL}}^v \ddxk v_{\ls}}.
\end{aligned}
\end{equation}
Assuming energy-conserving boundary conditions and $V_\js=\Theta_\js=0$, the energy method yields
\begin{equation}
    \dd{}{t} \left( \mathcal{E}_u + \mathcal{E}_v \right) = \intintphys{\dot{u}_\js}{\tau_\js^u} + \intintphys{\dot{v}_\js}{\tau_\js^v} = 0.
\end{equation}
We assume that the surface Jacobian $\surfjacobian$ is the same on the two sides of the interface so that grid points that coincide in the reference domain coincide also in the physical domain. In the following equations, we suppress superscripts $u$ and $v$ on the interface restriction operators $e_{\physinterface}$, because it is clear from context that, for example, $e^T_{\physinterface} \bfu_\ks$ denotes $(e^u_{\physinterface})^T \bfu_\ks$.

\reftwo{Omitting SATs for outer boundaries for convenience, we} discretize \eqref{eq:interface_phys} as
\begin{equation} \label{eq:scheme_interface}
\begin{aligned}
    \rho^u \ddot{\bfu}_\js  &= \cD^{\physdom_u}_{\is \ks}(\stiffphys_{\is \js \ks \ls}^u) \bfu_{\ls} -   (J^u H^u)^{-1} \satsym_{\ls \js}^T e_{\physinterface}  \surfjacobian H_{\physinterface} (e_{\physinterface}^T \bfu_\ls - e_{\physinterface}^T \bfv_\ls - \mathbf{V}_\ls)\\
    &+\frac{1}{2}(J^u H^u)^{-1}  \left(\tractionop_{\ls \js}^u \right)^T e_{\physinterface}  \surfjacobian H_{\physinterface} (e_{\physinterface}^T \bfu_\ls - e_{\physinterface}^T \bfv_\ls - \mathbf{V}_\ls)  \\
    &-\frac{1}{2}(J^u H^u)^{-1}  e_{\physinterface}  \surfjacobian H_{\physinterface} (e_{\physinterface}^T \tractionop_{\js \ls}^u \bfu_\ls + e_{\physinterface}^T \tractionop_{\js \ls}^v \bfv_\ls - \mathbf{\Theta}_\js),  \\ \\
    \rho^v \ddot{\bfv}_\js  &= \cD^{\physdom_v}_{\is \ks}(\stiffphys_{\is \js \ks \ls}^v) \bfv_{\ls} - (J^v H^v)^{-1} \satsym_{\ls \js}^T e_{\physinterface}  \surfjacobian H_{\physinterface} (e_{\physinterface}^T \bfv_\ls - e_{\physinterface}^T \bfu_\ls + \mathbf{V}_\ls)\\
    &+\frac{1}{2}(J^v H^v)^{-1}  \left(\tractionop_{\ls \js}^v \right)^T e_{\physinterface}  \surfjacobian H_{\physinterface} (e_{\physinterface}^T \bfv_\ls - e_{\physinterface}^T \bfu_\ls + \mathbf{V}_\ls)  \\
    &-\frac{1}{2}(J^v H^v)^{-1}  e_{\physinterface}  \surfjacobian H_{\physinterface} (e_{\physinterface}^T \tractionop_{\js \ls}^v \bfv_\ls + e_{\physinterface}^T \tractionop_{\js \ls}^u \bfu_\ls - \mathbf{\Theta}_\js),  \\
\end{aligned}
\end{equation}
where
\begin{equation}
 \satsym_{\js \ls} = \beta \frac{d}{4h_1} \surfjacobian \left( \frac{n_{\is}^u \stiffphys_{\is \js \ks \ls}^u n_{\ks}^u}{J^u} + \frac{n_{\is}^v \stiffphys_{\is \js \ks \ls}^v n_{\ks}^v}{J^v} \right), \quad \beta \geq 1.
\end{equation}
Note that $\satsym{\js \ls}$ satisfies $\dintintphys{\satsym_{\ls \js}\cdot}{\cdot} = \dintintphys{\cdot}{\satsym_{\js \ls} \cdot}$. The remainder of this section is devoted to proving that the scheme \eqref{eq:scheme_interface} is energy stable and self-adjoint.

To derive the weak form of \eqref{eq:scheme_interface}, we multiply the first equation by $\bfphi_\js^T J^u H^u$, which, with $\mathbf{V}_\js = \mathbf{\Theta}_\js = 0$ for convenience, yields
\begin{equation} \label{eq:weak_int_first}
\begin{aligned}
  \dvolintphysu{\bfphi_\js}{\rho^u \bfutt_\js} &= \dvolintphysu{\bfphi_\js}{\cD^{\physdom_u}_{\is \ks}(\stiffphys_{\is \js \ks \ls}^u) \bfu_{\ls}} - \dintintphys{\satsym_{\ls \js} \bfphi_\js}{\bfu_\ls - \bfv_\ls} \\
  &+ \frac{1}{2} \dintintphys{\tractionop^u_{\ls \js} \bfphi_\js}{\bfu_\ls - \bfv_\ls} - \frac{1}{2} \dintintphys{\bfphi_\js}{\tractionop^u_{\js \ls} \bfu_{\ls} + \tractionop^v_{\js \ls} \bfv_{\ls}}.
\end{aligned}
\end{equation}
Let
\begin{equation}
\begin{aligned}
M_u(\vec{\bfphi}, \vec{\bfu}) &= \dvolintphysu{\bfphi_\js}{\rho^u \bfu_\js} , \\
K_u(\vec{\bfphi}, \vec{\bfu}) &= \dvolintphysu{D_\is \bfphi_\js}{\stiffphys_{\is \js \ks \ls}^u D_\ks \bfu_{\ls}} + \bfphi^T_{\js} \Rmultid^u_{\js \ls} \bfu_{\ls}, \\
E_u &= \frac{1}{2} M_u(\dot{\vec{\bfu}}, \dot{\vec{\bfu}}) + \frac{1}{2}K_u(\vec{\bfu}, \vec{\bfu}),
\end{aligned}
\end{equation}
and define $M_v$, $K_v$, and $E_v$ analogously. Using the integration-by-parts formula \eqref{eq:ibp_K} in \eqref{eq:weak_int_first} yields
\begin{equation} \label{eq:weak_1}
\begin{aligned}
  M_u(\vec{\bfphi}, \ddot{\vec{\bfu}}) + K_u(\vec{\bfphi}, \vec{\bfu}) = &- \dintintphys{\satsym_{\ls \js} \bfphi_\js}{\bfu_\ls - \bfv_\ls} + \frac{1}{2} \dintintphys{\tractionop^u_{\ls \js} \bfphi_\js}{\bfu_\ls - \bfv_\ls} \\
  &+ \frac{1}{2} \dintintphys{\bfphi_\js}{\tractionop^u_{\js \ls} \bfu_{\ls} - \tractionop^v_{\js \ls} \bfv_{\ls}},
\end{aligned}
\end{equation}
\reftwo{where we have again omitted outer boundary terms.} Multiplying the second equation in \eqref{eq:scheme_interface} by $\bfchi_\js^T J^v H^v$ similarly leads to
\begin{equation} \label{eq:weak_2}
\begin{aligned}
  M_v(\vec{\bfchi}, \ddot{\vec{\bfv}}) + K_v(\vec{\bfchi}, \vec{\bfv}) = &- \dintintphys{\satsym_{\ls \js} \bfchi_\js}{\bfv_\ls - \bfu_\ls} + \frac{1}{2} \dintintphys{\tractionop^v_{\ls \js} \bfchi_\js}{\bfv_\ls - \bfu_\ls} \\
  &+ \frac{1}{2} \dintintphys{\bfchi_\js}{\tractionop^v_{\js \ls} \bfv_{\ls} - \tractionop^u_{\js \ls} \bfu_{\ls}}.
\end{aligned}
\end{equation}
We add \eqref{eq:weak_1} and \eqref{eq:weak_2} to obtain
\begin{equation} \label{eq:weak_added}
  M_u(\vec{\bfphi}, \ddot{\vec{\bfu}}) + K_u(\vec{\bfphi}, \vec{\bfu}) + M_v(\vec{\bfchi}, \ddot{\vec{\bfv}}) + K_v(\vec{\bfchi}, \vec{\bfv}) = I(\vec{\bfphi},\vec{\bfu},\vec{\bfchi},\vec{\bfv}),
\end{equation}
where $I$ is the sum of interface integrals,
\begin{equation}
\begin{aligned}
  I(\vec{\bfphi},\vec{\bfu},\vec{\bfchi},\vec{\bfv}) = &- \dintintphys{\bfphi_\js - \bfchi_{\js}}{\satsym_{\js \ls} (\bfu_\ls - \bfv_\ls)} \\
  &+ \frac{1}{2} \dintintphys{\tractionop^u_{\js \ls} \bfphi_\ls - \tractionop^v_{\js \ls} \bfchi_\ls}{\bfu_\js - \bfv_\js} \\
  &+ \frac{1}{2} \dintintphys{\bfphi_\js - \bfchi_\js}{\tractionop^u_{\js \ls} \bfu_{\ls} - \tractionop^v_{\js \ls} \bfv_{\ls}}.
\end{aligned}
\end{equation}
Note that $I$ is symmetric with respect to trial and test functions in the sense that
\begin{equation}
  I(\vec{\bfphi},\vec{\bfu},\vec{\bfchi},\vec{\bfv}) = I(\vec{\bfu},\vec{\bfphi},\vec{\bfv},\vec{\bfchi}).
\end{equation}
Setting $\vec{\bfphi} = \dot{\vec{\bfu}}$ and $\vec{\bfchi} = \dot{\vec{\bfv}}$ yields the energy rate
\begin{equation}
  \dd{}{t} \left( E_u + E_v\right) = I(\dot{\vec{\bfu}},\vec{\bfu},\dot{\vec{\bfv}},\vec{\bfv}) = \frac{1}{2} \dd{}{t} I(\vec{\bfu},\vec{\bfu},\vec{\bfv},\vec{\bfv}).
\end{equation}
We define the discrete energy
\begin{equation} \label{eq:energy_intf}
\begin{aligned}
  E_I = E_u + E_v &- \frac{1}{2} I(\vec{\bfu},\vec{\bfu},\vec{\bfv},\vec{\bfv}) \\
  = E_u + E_v &+ \frac{1}{2} \dintintphys{\bfu_\js - \bfv_{\js}}{\satsym_{\js \ls} (\bfu_\ls - \bfv_\ls)} \\
   &- \frac{1}{2} \dintintphys{\bfu_\js - \bfv_\js}{\tractionop^u_{\js \ls} \bfu_{\ls} - \tractionop^v_{\js \ls} \bfv_{\ls}},
\end{aligned}
\end{equation}
which satisfies
\begin{equation}
  \dd{E_I}{t} = 0.
\end{equation}
Note that $E_I$, just like $E_u + E_v$, approximates $\mathcal{E}_u + \mathcal{E}_v$, because the surface integrals in $I$ would be zero if the interface conditions were fulfilled exactly.

\begin{theorem}
The scheme \eqref{eq:scheme_interface} is stable.
\end{theorem}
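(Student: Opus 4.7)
Since the conservation identity $\dd{E_I}{t}=0$ has already been established, stability reduces to showing that $E_I$ defined in \eqref{eq:energy_intf} is non-negative for arbitrary $\bfu_\js, \bfv_\js$. My plan is to adapt the completion-of-squares argument from the displacement-boundary proof (Theorem \ref{theorem:dirichlet}) to the two-sided setting of an interface. First I would apply the multi-dimensional positivity property \eqref{eq:borrowing_phys} separately to $E_u$ and $E_v$, retaining only the part of each surface integral that lives on $\physinterface$ (contributions from the remaining faces are non-negative and may either be dropped in the lower bound or absorbed by their own outer-boundary SATs, which is why the interface analysis was allowed to omit them earlier).

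Next, using $\hat{n}^v = -\hat{n}^u$ on $\physinterface$ together with the definition \eqref{eq:discr_traction_op} of $\tractionop$, the indefinite cross term in $E_I$ can be recast as
\begin{equation*}
-\dintintphys{n^u_\is(\bfu_\js - \bfv_\js)}{\stiffphys^u_{\is\js\ks\ls} D_\ks \bfu_\ls} \;-\; \dintintphys{n^u_\is(\bfu_\js - \bfv_\js)}{\stiffphys^v_{\is\js\ks\ls} D_\ks \bfv_\ls},
\end{equation*}
which decouples into a purely $u$-dependent and a purely $v$-dependent piece. I would then complete squares independently on each side, using the major symmetry \eqref{eq:C_symmetry} and semidefiniteness \eqref{eq:C_semidef} of the transformed stiffness tensor to ensure the squared terms are non-negative. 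The $u$-side completion with constant $\alpha^u = d\surfjacobian/(2h_1 J^u)$ leaves a residual $-\tfrac{d\surfjacobian}{4h_1 J^u}\dintintphys{\bfu_\js-\bfv_\js}{n^u_\is \stiffphys^u_{\is\js\ks\ls} n^u_\ks(\bfu_\ls-\bfv_\ls)}$, and the analogous residual on the $v$-side uses $\alpha^v = d\surfjacobian/(2h_1 J^v)$. Using $n^u_\is n^u_\ks = n^v_\is n^v_\ks$, these two residuals combine into exactly the negative of the $\satsym$ contribution in $E_I$ when $\beta=1$, and for $\beta \geq 1$ leave a non-negative surface quadratic form in $\bfu_\js - \bfv_\js$. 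This yields $E_I \geq 0$ and hence stability.

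The main obstacle I anticipate is the bookkeeping that forces the specific factor $d/(4h_1)$ in $\satsym$: compared with the Dirichlet boundary proof, the indefinite cross term in $E_I$ carries coefficient $1$ rather than $2$ because the flux SATs were constructed from symmetric averages of the two tractions, and then the completion constant on each side contributes another factor $1/2$ relative to the single-sided case. These two halvings, together with an equal split between the $u$- and $v$-contributions, produce the $1/4$ and the particular additive combination $n^u\stiffphys^u n^u/J^u + n^v \stiffphys^v n^v/J^v$ that appears in the definition of $\satsym$. Beyond this, the argument is a direct extension of Theorem \ref{theorem:dirichlet}; no new ingredients about the SBP operators themselves are required, because the only properties used are \eqref{eq:borrowing_phys}, the major symmetry, and the semidefiniteness of $\stiffphys_{\is\js\ks\ls}$.
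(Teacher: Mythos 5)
Your proposal is correct and follows essentially the same route as the paper's proof: bound $2E_u$ and $2E_v$ from below by interface integrals via \eqref{eq:borrowing_phys}, split the indefinite cross term of $E_I$ into a purely $u$- and a purely $v$-dependent piece, complete the square on each side with constant $d\surfjacobian/(2h_1 J^{u,v})$, and absorb the residual $\tfrac{d\surfjacobian}{4h_1 J^{u,v}}\,n_\is \stiffphys_{\is\js\ks\ls} n_\ks$ terms into $\satsym = \satsym^u + \satsym^v$ with $\beta \geq 1$. The only cosmetic difference is that you rewrite the $v$-side terms using $\hat{n}^v = -\hat{n}^u$, whereas the paper keeps $n^v$ explicit; the constants and the final condition on $\satsym_{\js\ls}$ agree exactly.
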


\begin{proof}
We have shown that the scheme conserves the discrete energy $E_I$. It remains to prove that $E_I$ is a non-negative quantity. To keep the notation concise in the following, let
\begin{equation}
  \bfju_\js = \bfu_\js - \bfv_\js
\end{equation}
denote the jump in displacement.
The positivity property \eqref{eq:borrowing_phys} yields (cf. \eqref{eq:borrowing_H_multiD})
\begin{equation}
\begin{aligned}
  2E_u &\geq \frac{h_1}{d} \dintintphys{D_\is \bfu_\js}{\surfjacobian^{-1} J^u \stiffphys^u_{\is \js \ks \ls} D_\ks \bfu_{\ls}}, \\
  2E_v &\geq \frac{h_1}{d} \dintintphys{D_\is \bfv_\js}{\surfjacobian^{-1} J^v \stiffphys^v_{\is \js \ks \ls} D_\ks \bfv_{\ls}}.
\end{aligned}
\end{equation}
We set $\satsym_{\js \ks} = \satsym_{\js \ks}^u + \satsym_{\js \ks}^v$ and obtain
\begin{equation}
  2E_I \geq A_u + A_v,
\end{equation}
where
\begin{equation*}
\begin{aligned}
  A_u &= \frac{h_1}{d} \dintintphys{D_\is \bfu_\js}{\surfjacobian^{-1} J^u \stiffphys^u_{\is \js \ks \ls} D_\ks \bfu_{\ls}} + \dintintphys{\bfju_\js }{\satsym_{\js \ls}^u \bfju_\ls} - \dintintphys{\bfju_\js}{\tractionop^u_{\js \ls} \bfu_\ls}, \\
  A_v &= \frac{h_1}{d} \dintintphys{D_\is \bfv_\js}{\surfjacobian^{-1} J^v \stiffphys^v_{\is \js \ks \ls} D_\ks \bfv_{\ls}} + \dintintphys{\bfju_\js }{\satsym_{\js \ls}^v \bfju_\ls} + \dintintphys{\bfju_\js}{\tractionop^v_{\js \ls} \bfv_\ls}.
\end{aligned}
\end{equation*}
We choose $\satsym_{\js \ks}^u$ so that $A_u$ is non-negative. Using the definition of $\tractionop_{\js \ls}$ \eqref{eq:discr_traction_op} yields
\begin{equation}
\begin{aligned}
  \dintintphys{\bfju_\js}{\tractionop^u_{\js \ls} \bfu_\ls} = \dintintphys{\bfju_\js}{n^u_\is \stiffphys^u_{\is \js \ks \ls} D_\ks \bfu_{\ls}} = \dintintphys{n^u_\is  \bfju_\js}{\stiffphys^u_{\is \js \ks \ls} D_\ks \bfu_{\ls}}.
\end{aligned}
\end{equation}
Due to the major symmetry of $\stiffphys_{\is \js \ks \ls}$ \eqref{eq:C_symmetry}, we have
\begin{equation}
\dintintphys{n^u_\is  \bfju_\js}{\stiffphys^u_{\is \js \ks \ls} D_\ks \bfu_{\ls}}
= \dintintphys{n^u_\ks  \bfju_\ls}{\stiffphys^u_{\is \js \ks \ls} D_\is \bfu_{\js}}.
\end{equation}
Completing the squares in $A_u$ yields
\begin{equation*}
\begin{aligned}
  A_u &= \frac{h_1}{d} \dintintphys{D_\is \bfu_{\js} - \frac{d \surfjacobian }{2h_1 J^u} n^u_\is\bfju_\js}{ \frac{J^u}{\surfjacobian} \stiffphys^u_{\is \js \ks \ls} \left(D_\ks \bfu_\ls - \frac{d \surfjacobian }{2h_1 J^u} n^u_\ks\bfju_\ls \right)} \\
  &-  \dintintphys{n^u_\is\bfju_\js}{ \frac{d\surfjacobian}{4h_1 J^u} \stiffphys^u_{\is \js \ks \ls} n^u_\ks\bfju_\ls } + \dintintphys{\bfju_\js }{\satsym_{\js \ls}^u \bfju_\ls} \\
  &\geq \dintintphys{\bfju_{\js}}{ \left( \satsym^u_{\js \ls} -  \frac{d \surfjacobian }{4h_1 J^u} n^u_\is \stiffphys^u_{\is \js \ks \ls} n^u_\ks \right) \bfju_\ls },
\end{aligned}
\end{equation*}
which is non-negative if
\begin{equation}
  \satsym^u_{\js \ls} = \beta \frac{d \surfjacobian }{4h_1 J^u} n^u_\is \stiffphys^u_{\is \js \ks \ls} n^u_\ks, \quad \beta \geq 1.
\end{equation}
A similar derivation yields
\begin{equation}
  A_v \geq \dintintphys{\bfju_{\js}}{ \left( \satsym^v_{\js \ls} -  \frac{d \surfjacobian}{4h_1 J^v} n^v_\is \stiffphys^v_{\is \js \ks \ls} n^v_\ks \right) \bfju_\ls },
\end{equation}
which is non-negative if
\begin{equation}
  \satsym^v_{\js \ls} = \beta \frac{d \surfjacobian}{4h_1 J^v} n^v_\is \stiffphys^v_{\is \js \ks \ls} n^v_\ks, \quad \beta \geq 1.
\end{equation}
We conclude that $E_I$ is non-negative if
\begin{equation}
  \satsym_{\js \ls} = \satsym^u_{\js \ls} + \satsym^v_{\js \ls} = \beta \frac{d\surfjacobian}{4h_1} \left( \frac{n^u_\is \stiffphys^u_{\is \js \ks \ls} n^u_\ks}{J^u} + \frac{n^v_\is \stiffphys^v_{\is \js \ks \ls} n^v_\ks}{J^v} \right), \quad \beta \geq 1.
\end{equation}
\end{proof}
In all simulations in this paper, we set $\beta = 1$, i.e., right on the limit of provable stability.

\subsection{Self-adjointness}
Let $\physdomfull = \physdom_u \cup \physdom_v$ denote the full domain. Introduce the notation
\begin{equation}
  w_\js = \left\{
  \begin{array}{ll}
  u_\js, & \vec{X} \in \physdom_u \\
  v_\js, & \vec{X} \in \physdom_v
  \end{array}
  \right.
  \quad \mbox{and} \quad
  \psi_\js = \left\{
  \begin{array}{ll}
  \phi_\js, & \vec{X} \in \physdom_u \\
  \chi_\js, & \vec{X} \in \physdom_v
  \end{array}
  \right.,
\end{equation}
where we think of $w_\js$ as the primal field and $\psi_\js$ as the adjoint field. The continuous elastic operator satisfies
\begin{equation}
  \contOpInt_{\js \ls} w_{\ls} = \left\{
  \begin{array}{ll}
  \ddxi \stiffphys^u_{\is \js \ks \ls} \ddxk u_\ls, & \vec{X} \in \physdom_u \\
  \ddxi \stiffphys^v_{\is \js \ks \ls} \ddxk v_\ls, & \vec{X} \in \physdom_v
  \end{array}
  \right. .
\end{equation}
Requiring that $\contOpInt_{\js \ls} w_{\ls}$ be square-integrable over each subdomain leads us to define the space
\begin{equation}
  \mathcal{V} = \left\{ w_\js \in L^2(\physdomfull)
  \; \Bigg | \;
  \begin{array}{l}
  \smallskip
  \ddxi \stiffphys^u_{\is \js \ks \ls} \ddxk u_\ls \in L^2(\Omega_u) \\
  \ddxi \stiffphys^v_{\is \js \ks \ls} \ddxk v_\ls \in L^2(\Omega_v) \\
  \end{array}
  \right\}.
\end{equation}
We further require that $w_\js$ satisfies appropriate interface and boundary conditions. We define
\begin{equation}
  \mathcal{W} = \left\{ w_\js \in \mathcal{V}
  \; \Bigg | \;
  \begin{array}{rl}
  u_\js - v_\js = 0 &\mbox{on } \Gamma \\
  \ctractionop^u_{\js \ls} u_\ls + \ctractionop^v_{\js \ls} v_\ls = 0 &\mbox{on } \Gamma \\
  L_{\js \ls} w_\ls = 0 &\mbox{on } \physboundaryfull
  \end{array}
  \right\},
\end{equation}
where the boundary operator may be either $L_{\js \ls} = \ctractionop_{\js \ls} + U_{\js \ls}$, for Robin boundary conditions, or $L_{\js \ls} = \delta_{\js \ls}$, for displacement boundary conditions.
The operator $\contOpInt_{\js \ls} : \mathcal{W} \rightarrow L^2(\physdomfull)$ is self-adjoint, because integrating by parts twice yields
\begin{equation}
\begin{aligned}
  \volintphysfull{\psi_\js}{\contOpInt_{\js \ls} w_\ls}
  &= \intintphys{\phi_\js}{\ctractionop^u_{\js \ls} u_{\ls} } + \intintphys{\chi_\js}{\ctractionop^v_{\js \ls} v_{\ls} } \\
  &- \intintphys{\ctractionop^u_{\js \ls} \phi_\ls}{u_{\js} } - \intintphys{\ctractionop^v_{\js \ls} \chi_\ls}{ v_{\js} } \\
  &+ \volintphysu{\ddxi \stiffphys^u_{\is \js \ks \ls} \ddxk \phi_\ls}{u_\js} + \volintphysv{\ddxi \stiffphys^v_{\is \js \ks \ls} \ddxk \chi_\ls}{ v_\js} \\
  &= \volintphysfull{\contOpInt_{\js \ls} \psi_\ls}{w_\js}  \; \forall  w_\js, \psi_\js \in \mathcal{W}.
\end{aligned}
\end{equation}

Now consider the discrete elastic operator, including interface SATs. Let
\begin{equation}
\bfw_\js = \begin{bmatrix} \bfu_\js \\ \bfv_\js \end{bmatrix}, \quad \bfpsi_\js = \begin{bmatrix} \bfphi_\js \\ \bfchi_\js \end{bmatrix} .
\end{equation}
Omitting SATs for boundary conditions, the total discrete elastic operator in \eqref{eq:scheme_interface} can be written as
\begin{equation}
  \cD^{tot}_{\js \ls} =
  \begin{bmatrix}
  \cD_{\is \ks}^{\physdom_u}(\stiffphys^u_{\is \js \ks \ls}) + \mathbb{S}_{\js \ls}^{uu} & \mathbb{S}_{\js \ls}^{uv} \\
  \mathbb{S}_{\js \ls}^{vu} & \cD_{\is \ks}^{\physdom_v}(\stiffphys^v_{\is \js \ks \ls}) + \mathbb{S}_{\js \ls}^{vv}
  \end{bmatrix},
\end{equation}
where the $\mathbb{S}_{\js \ls}$ operators correspond to the interface SATs.
We define discrete integrals over the full domain as the sum of integrals over the subdomains,
\begin{equation}
  \dvolintphysfull{\bfpsi_\js}{\bfw_\js} = \dvolintphysu{\bfphi_\js}{\bfu_\js} + \dvolintphysv{\bfchi_\js}{\bfv_\js} .
\end{equation}

\begin{theorem} \label{thm:selfadjoint_interface}
The total discrete elastic operator $\cD^{tot}_{\js \ls}$, corresponding to the\\ scheme \eqref{eq:scheme_interface}, including interface SATs, is self-adjoint, i.e.,
\begin{equation}
  \dvolintphysfull{\bfpsi_\js}{\cD^{tot}_{\js \ls} \bfw_\ls} = \dvolintphysfull{\cD^{tot}_{\js \ls} \bfpsi_\ls}{\bfw_\js} \quad \forall \bfpsi_\js, \bfw_\js .
\end{equation}
\end{theorem}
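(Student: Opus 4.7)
The plan is to mimic the proof of Theorem \ref{thm:selfadjoint_boundary} by showing that $\dvolintphysfull{\bfpsi_\js}{\cD^{tot}_{\js \ls} \bfw_\ls}$ can be written as a sum of bilinear forms each of which is manifestly symmetric under the swap $(\vec{\bfpsi},\vec{\bfw}) \leftrightarrow (\vec{\bfw},\vec{\bfpsi})$. The derivation of the weak form \eqref{eq:weak_added} has essentially done all the work already; the proof will amount to repackaging it.

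First I would split
\begin{equation*}
    \dvolintphysfull{\bfpsi_\js}{\cD^{tot}_{\js \ls} \bfw_\ls} = \dvolintphysu{\bfphi_\js}{\cD^{\Omega_u}_{\is \ks}(\stiffphys^u_{\is\js\ks\ls})\bfu_\ls + \mathbb{S}^{uu}_{\js\ls}\bfu_\ls + \mathbb{S}^{uv}_{\js\ls}\bfv_\ls} + \dvolintphysv{\bfchi_\js}{\cD^{\Omega_v}_{\is \ks}(\stiffphys^v_{\is\js\ks\ls})\bfv_\ls + \mathbb{S}^{vu}_{\js\ls}\bfu_\ls + \mathbb{S}^{vv}_{\js\ls}\bfv_\ls},
\end{equation*}
and apply the integration-by-parts formula \eqref{eq:ibp_K} on each subdomain. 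By exactly the computation that led from \eqref{eq:weak_int_first} to \eqref{eq:weak_added} (with $\vec{\bfphi}$, $\vec{\bfchi}$ playing the role of arbitrary test functions and outer-boundary SATs omitted as in the statement of the theorem), the surface-integral contributions from $\cD^{\Omega_u}$ and $\cD^{\Omega_v}$ combine with the interface SAT terms $\mathbb{S}^{uu}, \mathbb{S}^{uv}, \mathbb{S}^{vu}, \mathbb{S}^{vv}$ to produce exactly the form $I(\vec{\bfphi},\vec{\bfu},\vec{\bfchi},\vec{\bfv})$. Thus
\begin{equation*}
    \dvolintphysfull{\bfpsi_\js}{\cD^{tot}_{\js \ls} \bfw_\ls} = -K_u(\vec{\bfphi},\vec{\bfu}) - K_v(\vec{\bfchi},\vec{\bfv}) + I(\vec{\bfphi},\vec{\bfu},\vec{\bfchi},\vec{\bfv}).
\end{equation*}

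Each of the three forms on the right is symmetric with respect to swapping the primal and dual arguments. For $K_u$ and $K_v$, the symmetry follows from the major symmetry of $\stiffphys_{\is\js\ks\ls}$ \eqref{eq:C_symmetry} and from $\Rmultid_{\js\ls} = \Rmultid_{\ls\js}^T$. For $I$, the symmetry $I(\vec{\bfphi},\vec{\bfu},\vec{\bfchi},\vec{\bfv}) = I(\vec{\bfu},\vec{\bfphi},\vec{\bfv},\vec{\bfchi})$ was already observed in the text just after the definition of $I$; it rests on $\dintintphys{\satsym_{\ls\js}\,\cdot\,}{\cdot} = \dintintphys{\,\cdot\,}{\satsym_{\js\ls}\,\cdot\,}$ together with the major symmetry of each $\stiffphys^{u,v}$ used to transpose the traction terms $\tractionop^{u,v}_{\ls\js}$ and $\tractionop^{u,v}_{\js\ls}$ against the surface inner product. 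Combining these three symmetries gives
\begin{equation*}
    \dvolintphysfull{\bfpsi_\js}{\cD^{tot}_{\js \ls} \bfw_\ls} = -K_u(\vec{\bfu},\vec{\bfphi}) - K_v(\vec{\bfv},\vec{\bfchi}) + I(\vec{\bfu},\vec{\bfphi},\vec{\bfv},\vec{\bfchi}) = \dvolintphysfull{\bfw_\js}{\cD^{tot}_{\js \ls} \bfpsi_\ls},
\end{equation*}
and the result follows from the symmetry of $\dvolintphysfull{\cdot}{\cdot}$.

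No step here is really the hard part; the substantive work was done in setting up the SATs in \eqref{eq:scheme_interface} with symmetric penalty $\satsym_{\js\ls}$ and the factor $\tfrac{1}{2}$ in front of the traction terms precisely so that the interface form $I$ is symmetric. The only thing to double-check is that $\mathbb{S}^{uu}, \mathbb{S}^{uv}, \mathbb{S}^{vu}, \mathbb{S}^{vv}$ as read off from \eqref{eq:scheme_interface} really do assemble into the form $I$ after the weighting by $\dvolintphysu{\cdot}{\cdot}$ and $\dvolintphysv{\cdot}{\cdot}$ — this is merely a bookkeeping check that cancels the $(J^uH^u)^{-1}$ and $(J^vH^v)^{-1}$ factors against the volume weights, exactly as was done in arriving at \eqref{eq:weak_1} and \eqref{eq:weak_2}.
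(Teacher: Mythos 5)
Your proposal is correct and follows essentially the same route as the paper: both reduce $\dvolintphysfull{\bfpsi_\js}{\cD^{tot}_{\js \ls} \bfw_\ls}$ to $-K_u(\vec{\bfphi},\vec{\bfu}) - K_v(\vec{\bfchi},\vec{\bfv}) + I(\vec{\bfphi},\vec{\bfu},\vec{\bfchi},\vec{\bfv})$ via the weak form \eqref{eq:weak_added} and then invoke the symmetry of each bilinear form. The only cosmetic difference is that you spell out the bookkeeping check on the $\mathbb{S}$ blocks, which the paper takes for granted from the earlier derivation.
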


\begin{proof}
In deriving the weak form \eqref{eq:weak_added}, we showed that
\begin{equation}
\begin{aligned}
  \dvolintphysfull{\bfpsi_\js}{\cD^{tot}_{\js \ls} \bfw_\ls} &= \dvolintphysu{\bfphi_\js}{(\cD_{\is \ks}^{\physdom_u}(\stiffphys^u_{\is \js \ks \ls}) + \mathbb{S}_{\js \ls}^{uu}) \bfu_\ls + \mathbb{S}_{\js \ls}^{uv} \bfv_\ls} \\
  &+ \dvolintphysv{\bfchi_\js}{(\cD_{\is \ks}^{\physdom_v}(\stiffphys^v_{\is \js \ks \ls}) + \mathbb{S}_{\js \ls}^{vv}) \bfv_\ls + \mathbb{S}_{\js \ls}^{vu} \bfu_\ls} \\
  &= - K_u(\vec{\bfphi}, \vec{\bfu}) - K_v(\vec{\bfchi}, \vec{\bfv}) + I(\vec{\bfphi},\vec{\bfu},\vec{\bfchi},\vec{\bfv}),
\end{aligned}
\end{equation}
where we are omitting all terms corresponding to outer boundaries for convenience.
Using the symmetries of $K_{u,v}$ and $I$ and the symmetry of $\dvolintphysfull{\cdot}{\cdot}$ yields
\begin{equation}
\begin{aligned}
  \dvolintphysfull{\bfpsi_\js}{\cD^{tot}_{\js \ls} \bfw_\ls} &= - K_u(\vec{\bfphi}, \vec{\bfu}) - K_v(\vec{\bfchi}, \vec{\bfv}) + I(\vec{\bfphi},\vec{\bfu},\vec{\bfchi},\vec{\bfv}) \\
  &= - K_u(\vec{\bfu}, \vec{\bfphi}) - K_v(\vec{\bfv}, \vec{\bfchi}) + I(\vec{\bfu},\vec{\bfphi},\vec{\bfv},\vec{\bfchi}) \\
  &= \dvolintphysfull{\bfw_\js}{\cD^{tot}_{\js \ls} \bfpsi_\ls} = \dvolintphysfull{\cD^{tot}_{\js \ls} \bfpsi_\ls}{\bfw_\js}.
\end{aligned}
\end{equation}
\end{proof}

\section{Numerical experiments} \label{sec:num_exp}
This section contains three numerical
experiments. First, we use the method of manufactured solutions to assess the global convergence rates of the new SBP-SAT schemes based on the fully compatible operators adapted from Mattsson's operators \cite{Mattsson11}. Second, we use the new methods to evaluate the performance of an elastodynamic cloak. Third, we solve an application problem inspired by seismic exploration in mountainous regions.

 Before presenting the numerical experiments, we briefly discuss how the time step is selected. \refthree{In general we cannot rely on von Neumann analysis, which is restricted to constant coefficient problems in unbounded domains; the maximum stable time step is often influenced by boundary closures and penalty terms. Computing eigenvalues of the spatial discretization allows one to precisely determine the stability limit, but eigenvalue computations are prohibitively expensive for large problems. Instead, we seek a relatively cheap procedure that, for the majority of cases, yields a stable time step close to the stability limit; one can then adjust the time step around this estimate through trial-and-error. We consider the transformed problem \eqref{eq:model_problem_transf} in the reference domain $\refdom$, because the grid spacing $h$ is well defined in $\refdom$. We choose the time step according to
 \begin{equation}
  \Delta t = \mbox{CFL} \times \min \limits_{\mbox{\tiny{all gridpoints}}} \; \frac{h}{v_{max}},
\end{equation}
where $v_{max}$ denotes the largest quasi-P-wave speed. Given a direction of propagation, computing $v_{max}$ amounts to inserting the transformed material properties into the Christoffel equation \eqref{eq:christoffel} and finding the largest root of a degree $d$ polynomial whose coefficients are functions of density and stiffness \cite{Synge1956}. Finding the direction of fastest propagation is a $(d-1)$-dimensional optimization problem, which we here solve using MATLAB's fmincon.
The dimensionless constant $\mbox{CFL}$ depends on the order of accuracy. Appropriate values of $\mbox{CFL}$ are determined empirically but are generally $\mathcal{O}(1)$.
 }

\subsection{Convergence studies} \label{sec:conv}
Consider the domain depicted in Figure \ref{fig:convDomain}.
We use the method of manufactured solutions and choose the exact solution
\begin{equation} \label{eq:mms_exact}
  u_1 = \sin(2X_1 + 3X_2 - t), \quad u_2 = \sin(3X_1 + 2X_2 - 2t),
\end{equation}
and the material parameters
\begin{equation}
  \rho = 2 + \sin\left(\reftwo{\frac{X_1+X_2}{2}} \right), \quad \stiffphys_{\il \jl \kl \elll} = \left\{ \begin{array}{ll}
  \medskip
  \alpha_{\is\js\ks\ls}, & \il=\kl \mbox{ and } \jl=\elll \\
  \beta_{\is \js \ks \ls}, & \mbox{otherwise} \\
  \end{array}
  \right. ,
\end{equation}
where
\begin{equation}
  \alpha_{\is\js\ks\ls} = 8 + \sin(\il X_1 + \jl X_2) + \frac{1}{2} \sin(\kl X_1- \elll X_2)
\end{equation}
and
\begin{equation}
  \begin{aligned}
  \beta_{\is \js \ks \ls} &= \frac{1}{8} \left( \sin(\il X_1 + \jl X_2) + \sin(\kl X_1 + \elll X_2)  \right) \\
  &+ \frac{1}{16} \left( \sin(\il X_1-\jl X_2) + \sin(\kl X_1-\elll X_2) \right).
  \end{aligned}
\end{equation}
We impose traction conditions on the outer boundaries and displacement conditions on the interior scatterer, and use the exact solution as boundary and initial data. For time-integration we use the classical fourth order Runge--Kutta method with $\mbox{CFL} = 0.5$, which proved to be small enough to make the spatial errors dominate. We set $T=1$ as the final time. \refone{Table \ref{table:conv} and Figure \ref{fig:convergence} show} the $\ell^2$ errors as functions of $h$, where $h$ denotes the grid spacing in the reference domain. \refthree{Table \ref{table:conv} also shows the number of grid points per solution wavelength (PPWL) used near the outer boundaries, where the grid spacing is the largest. The exact solution \eqref{eq:mms_exact} is not a plane wave but both $u_1$ and $u_2$ equal waves with wavelength $2 \pi / \sqrt{13}$; hence we use $2 \pi/ \sqrt{13}$ as the ``wavelength'' when computing the PPWL.}
\begin{figure}[h]
    \begin{subfigure}[b]{.49\linewidth}
        \centering
        \includegraphics[width= 0.75\linewidth]{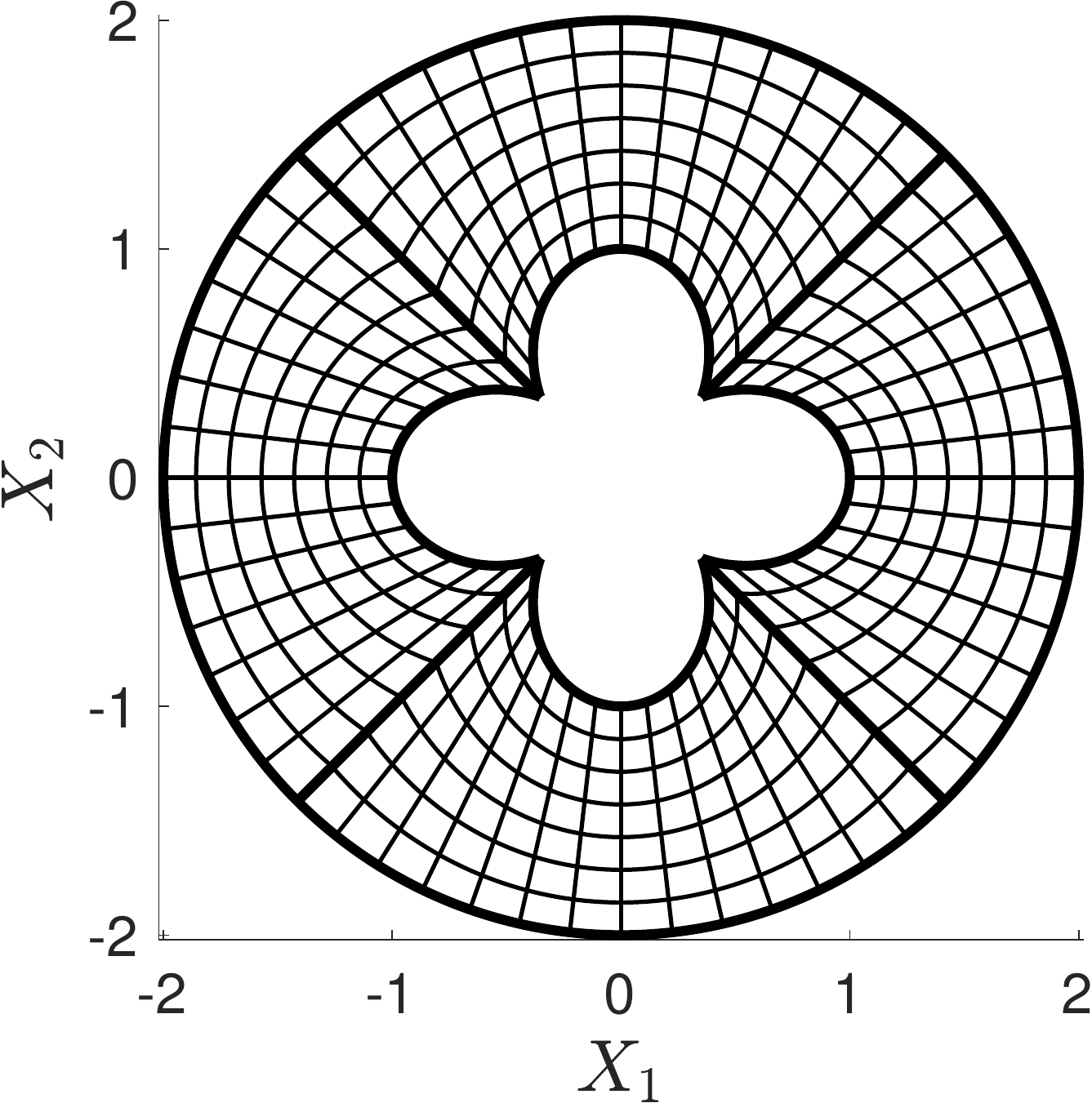}
        \caption{Domain and grid configuration}\label{fig:convDomain}
    \end{subfigure}
    \begin{subfigure}[b]{.49\linewidth}
        \centering
        \includegraphics[width=0.99\linewidth]{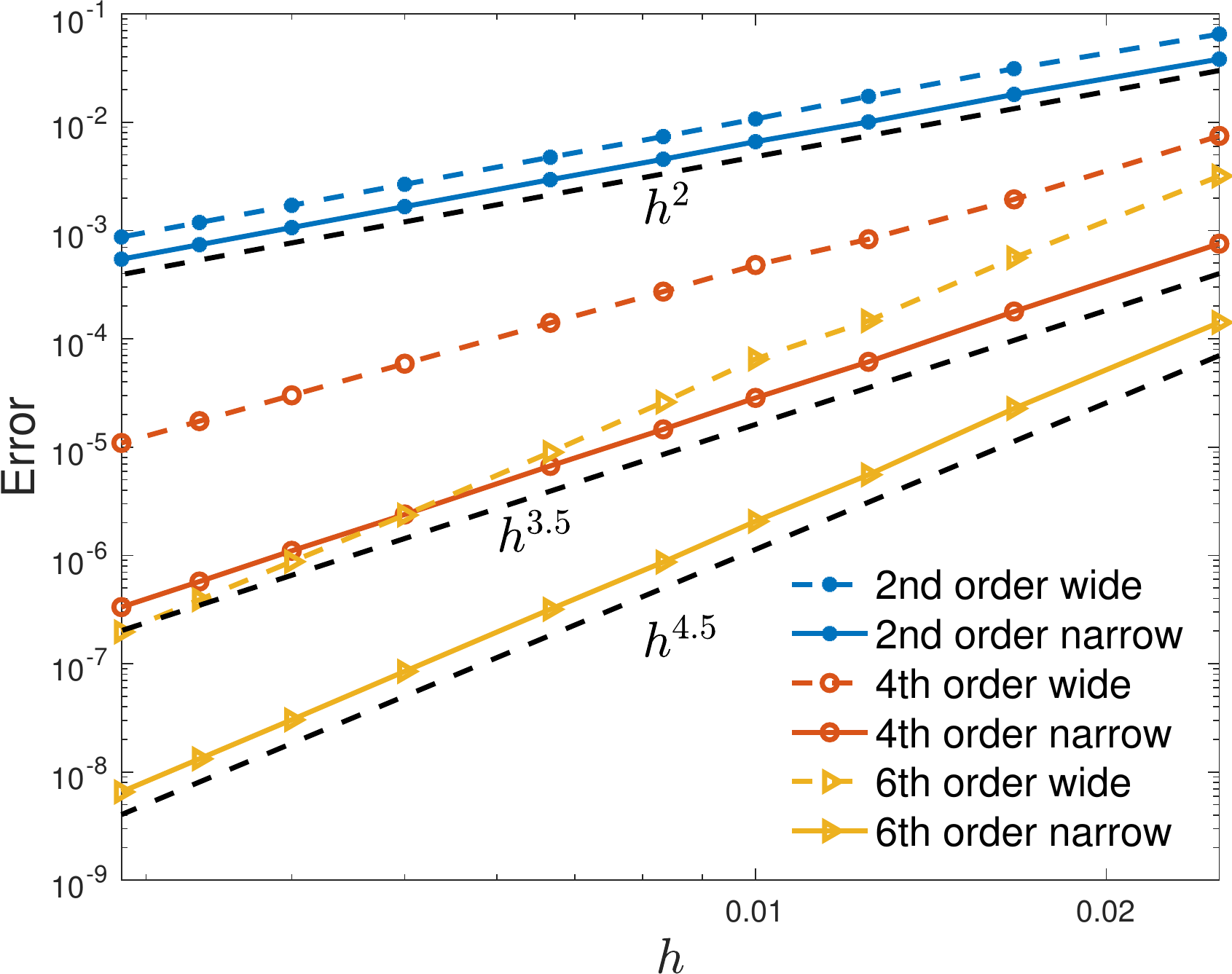}
        \caption{Convergence plot}\label{fig:convergence}
    \end{subfigure}
    \caption{(a) Multiblock grid used in the computations. (b) Convergence plot\reftwo{, comparing the narrow and wide stencils}. $h$ denotes the
smallest grid spacing in the reference domain. }
\end{figure}
\begin{table}[h]
\centering
\begin{tabular}{c|c|cc|cc|cc} & &
\multicolumn{2}{|c}{second order}& \multicolumn{2}{|c}{fourth order}& \multicolumn{2}{|c}{sixth order} \\
\hline
$h^{-1}$& \refthree{PPWL} &$log_{10}(\mbox{error})$ & $r$& $log_{10}(\mbox{error})$ & $r$& $log_{10}(\mbox{error})$ & $r$ \\
\hline
40 & \refthree{22} &   -1.42 &          &    -3.12 &          &    -3.85 &         \\
60 & \refthree{33} &   -1.74 &     1.85 &    -3.75 &     3.56 &    -4.64 &     4.51\\
80 & \refthree{44} &   -2.00 &     2.03 &    -4.21 &     3.72 &    -5.26 &     4.90\\
100 & \refthree{55} &   -2.18 &     1.88 &    -4.55 &     3.44 &    -5.69 &     4.44\\
120 & \refthree{67} &   -2.34 &     2.05 &    -4.84 &     3.67 &    -6.06 &     4.75\\
150 & \refthree{83} &   -2.53 &     1.94 &    -5.17 &     3.48 &    -6.50 &     4.50\\
200 & \refthree{111} &   -2.78 &     1.99 &    -5.62 &     3.59 &    -7.07 &     4.59\\
250 & \refthree{139} &   -2.97 &     2.00 &    -5.96 &     3.46 &    -7.52 &     4.61\\
300 & \refthree{166} &   -3.13 &     2.00 &    -6.24 &     3.58 &    -7.88 &     4.55\\
350 & \refthree{194} &   -3.26 &     2.00 &    -6.48 &     3.52 &    -8.18 &     4.56\\
\hline
\multicolumn{2}{c|}{\reftwo{avg.\ rate}} & & \reftwo{1.96} & & \reftwo{3.56} & & \reftwo{4.60}
\end{tabular}
\caption{$\ell^2$ errors and convergence rates $r$ \reftwo{for the anisotropic problem, using the narrow stencil.} }
\label{table:conv}
\end{table}
The convergence rates appear to be 2, 3.5, and 4.5, for interior orders two, four, and six. Recall that the adapted operators used here have reduced boundary accuracy $q_b = q-1$. In numerical experiments with second-derivative SBP operators, the convergence rate is often observed to be $\min(q_b+2,2q)$. For the adapted operators, this rule of thumb predicts rates 2, 3, and 4, and for operators with full boundary accuracy $q_b = q$, it predicts rates 2, 4, and 5. The second order adapted operator yields rate 2, as predicted by the rule of thumb. For orders four and six, the adapted operators suffer from a reduction by only half an order compared to their $q_b=q$ counterparts. Their rates are half an order higher than predicted by the rule of thumb. Explaining this ``super convergence'' will have to be the topic of another paper. For now, we conclude that---as fully compatible operators with full boundary accuracy are currently lacking---the adapted operators provide a reasonable compromise that allows for a straightforward stability proof at the cost of no more than half an order reduction of global accuracy.

\refone{To demonstrate the advantage of the narrow-stencil second-derivative operators over the wide-stencil operators, which results from applying first-derivative SBP operators twice, Figure \ref{fig:convergence} shows the convergence behavior for both methods. We use exactly the same SATs in both cases. It is straightforward to verify that the wide-stencil operator also is a fully compatible SBP operator and that the resulting scheme is energy-stable and self-adjoint. In the second-order accurate case, the narrow-stencil method is slightly more accurate. For higher orders, the narrow-stencil method is more than an order of magnitude more accurate. The spectral radius of the spatial operator is for this problem slightly larger for the narrow scheme than for the wide. For the grid corresponding to $h=0.01$ the relative differences in spectral radius are:
\begin{equation*}
  \mbox{second order: }5.34 \%, \quad \mbox{fourth order: } 1.28 \%, \quad \mbox{sixth order: } 1.23 \%.
\end{equation*}
Note that the largest stable time-step is approximately proportional to the square root of the spectral radius. Hence, compared to the big difference in accuracy, the slight increase in spectral radius has very little impact on performance.
}

Although \cite{Duru201437} did not observe any accuracy reduction for the adapted operators applied to isotropic materials, we can hereby conclude that schemes based on the adapted operators of orders $2q=4$ and $2q=6$ both suffer a reduction by half an order, at least for general anisotropic materials. \reftwo{To investigate also the isotropic case, we use the same exact solution and domain as for the anisotropic problem but with spatially uniform isotropic material properties $\rho = 1$, $\lambda=\mu=1$. We impose traction conditions on all boundaries. The results are shown in Table \ref{table:convIso}. We observe a clear reduction for $2q=6$, for an average rate of $4.54$. For order $2q=4$, it is not entirely obvious whether the asymptotic rate (average $3.70$) is $3.5$ or $4$. We conclude that reductions in convergence rate can manifest even in the isotropic case and the rates of $q+2$, as observed in \cite{Duru201437}, cannot be expected in general. Fully understanding this matter is, however, out of the scope of the present study.
}
\begin{table}[h]
\centering
\begin{tabular}{c|c|cc|cc|cc} & &
\multicolumn{2}{|c}{second order}& \multicolumn{2}{|c}{fourth order}& \multicolumn{2}{|c}{sixth order} \\
\hline
$h^{-1}$& PPWL &$log_{10}(\mbox{error})$ & $r$& $log_{10}(\mbox{error})$ & $r$& $log_{10}(\mbox{error})$ & $r$ \\
\hline
40 & 22 &   -1.20 &          &    -2.87 &          &    -3.62 &         \\
60 & 33 &   -1.52 &     1.80 &    -3.49 &     3.49 &    -4.38 &     4.30\\
80 & 44 &   -1.76 &     1.95 &    -3.96 &     3.78 &    -4.95 &     4.53\\
100 & 55 &   -1.94 &     1.84 &    -4.31 &     3.59 &    -5.36 &     4.23\\
120 & 67 &   -2.10 &     1.98 &    -4.61 &     3.85 &    -5.73 &     4.66\\
150 & 83 &   -2.28 &     1.90 &    -4.97 &     3.71 &    -6.16 &     4.47\\
200 & 111 &   -2.52 &     1.94 &    -5.44 &     3.78 &    -6.74 &     4.66\\
250 & 139 &   -2.71 &     1.95 &    -5.81 &     3.77 &    -7.20 &     4.72\\
300 & 166 &   -2.87 &     1.95 &    -6.11 &     3.78 &    -7.57 &     4.76\\
350 & 194 &   -3.00 &     1.96 &    -6.36 &     3.76 &    -7.90 &     4.79\\
\hline
\multicolumn{2}{c|}{avg.\ rate} & & 1.91 & & 3.70 & & 4.54
\end{tabular}
\caption{\reftwo{$\ell^2$ errors and convergence rates $r$ for the isotropic problem, using the narrow stencil.}}
\label{table:convIso}
\end{table}

\reftwo{
\subsection{Stability and self-adjointness}
To verify that the schemes are energy conserving and self-adjoint, we again use the domain in Figure \ref{fig:convDomain}. We use random material properties. Let $\widetilde{\rho}$ be a grid function of random numbers drawn from the standard uniform distribution $\mathcal{U}(0,1)$. Similarly, let all independent components of $\widetilde{C}_{\is \js \ks \ls}$ be drawn from $\mathcal{U}(0,1)$ (remaining components are determined by the major symmetry). We then set the discrete material properties
\begin{equation}
  \rho = 1 + \widetilde{\rho}, \quad \stiffphys_{\il \jl \kl \elll} = \left\{ \begin{array}{ll}
  \medskip
  \widetilde{C}_{\is\js\ks\ls} + 4, & \il=\kl \mbox{ and } \jl=\elll \\
  \widetilde{C}_{\is\js\ks\ls}, & \mbox{otherwise} \\
  \end{array}
  \right. .
\end{equation}
Theorems \ref{thm:selfadjoint_boundary} and \ref{thm:selfadjoint_interface} prove that the total discrete elastic operator $\cD_{\js \ls}^{tot}$ is self-adjoint in the inner product defined by the physical quadrature $JH$. In two spatial dimensions, this is equivalent to the matrix $A$ being symmetric, where
\begin{equation}
  A = \mathcal{H} \begin{bmatrix}  \cD_{11}^{tot} &  \cD_{12}^{tot} \\
                       \cD_{21}^{tot} &  \cD_{22}^{tot}\end{bmatrix} \quad \mbox{and} \quad \mathcal{H} = \begin{bmatrix} JH & \\ & JH \end{bmatrix}.
\end{equation}
We set the smallest grid spacing in the reference domain to $h=0.01$, which leads to a total of $19796$ grid points.
The relative deviations from symmetry $\norm{A-A^T}_{\text{max}}/\norm{A}_{\text{max}}$ for this problem are:
\begin{equation*}
\begin{aligned}
  &\mbox{second order: }1.61\times10^{-16}, \quad \mbox{fourth order: }3.64\times10^{-16}, \\
  &\; \mbox{   sixth order: }2.08\times10^{-16},
\end{aligned}
\end{equation*}
which verifies that the schemes are self-adjoint to machine precision.

In the absence of external forces and boundary data, the semidiscrete equations take the form
\begin{equation}
  \mathcal{P} \mathcal{H} \ddot{\vec{\bfu}} = A \vec{\bfu}, \quad \mbox{where } \mathcal{P} = \begin{bmatrix} \rho & \\ & \rho \end{bmatrix}.
\end{equation}
Since $A$ is symmetric, the semidiscrete problem preserves the quantity
\begin{equation}
  \varepsilon = \frac{1}{2} \left( \left(\dot{\vec{\bfu}}\right)^T \mathcal{P} \mathcal{H} \dot{\vec{\bfu}} - \vec{\bfu}^T A \vec{\bfu} \right),
\end{equation}
which is precisely the semidiscrete energy given by \eqref{eq:energy_robin}, \eqref{eq:e_d}, and \eqref{eq:energy_intf}. Our stability analysis further guarantees that the semidiscrete energy is non-negative, and hence a seminorm of $\vec{\bfu}$. That is, we have proved that, with proper SATs, $A$ is negative semidefinite. For the random material properties above and $h=0.01$, the largest eigenvalues of $hA$ are:
\begin{equation*}
\begin{aligned}
  &\mbox{second order: } -6.981 \times10^{-5}, \quad \mbox{fourth order: } -6.977\times10^{-5}, \\
  & \; \mbox{   sixth order: } -6.976\times10^{-5},
\end{aligned}
\end{equation*}
which verifies that $A$ is negative semidefinite.
}

\subsection{Elastodynamic cloaking} \label{sec:cloaking}
Elastic cloaking is the art of making an object impossible to detect by means of elastic waves by surrounding the object with carefully chosen materials. \reftwo{These material properties are chosen such that waves, incident from any direction, pass around the object and reform on the other side in such a way that the wavefield outside the cloak is (approximately) the same as if the object were absent.} \refthree{Elastodynamic cloaking may be used to conceal military objects \cite{Olsson2011}, shield buildings from seismic waves from earthquakes, and reduce vibrations in cars \cite{Farhat2009}.  } \reftwo{
To design a cloak, we utilize coordinate transformation theory \cite{Milton2006, Norris2011, craster2018}. As an example, we cloak the impenetrable object shown in black in Figure \ref{fig:cloakOrig}. We assume that the background medium is homogeneous with $\rho=1$ and $\lambda=\mu=1$ and model impenetrability by imposing homogeneous displacement conditions on the object's surface. To construct the cloak we proceed as follows:
\begin{enumerate}
 \item Choose the exterior boundary of the cloak (see Figure \ref{fig:cloakOrig}). Let $\Omega^c$ denote the region that the cloak will occupy. That is, the material parameters will be adjusted only within $\Omega^c$.

 \item Introduce a fictitious object, significantly smaller than the original object, which the cloaked object will mimic. We will refer to the region between this fictictious object and the exterior boundaries of the cloak as $\Omega^f$ (see Figure \ref{fig:cloakFict}).

 \item Given a mapping between $\Omega^f$ and $\Omega^c$ (we discuss how to obtain this mapping later), transform the equations of motion with homogeneous material properties in $\Omega^f$ to equivalent equations posed on $\Omega^c$. That is, repeat the transformation analysis in Section \ref{sec:transform} with $\physdom=\Omega^f$, $\refdom=\Omega^c$. Since the transformed equations are equivalent, filling the cloak with the transformed material guarantees that the cloaked object will be indistinguishable from the fictitious object, when probed from outside the cloak. If the fictitious object is small enough, the cloaked object will be practically undetectable.
\end{enumerate}

\begin{figure}[h]
    \begin{subfigure}[b]{.49\linewidth}
        \centering
        \includegraphics[width= 0.99\linewidth]{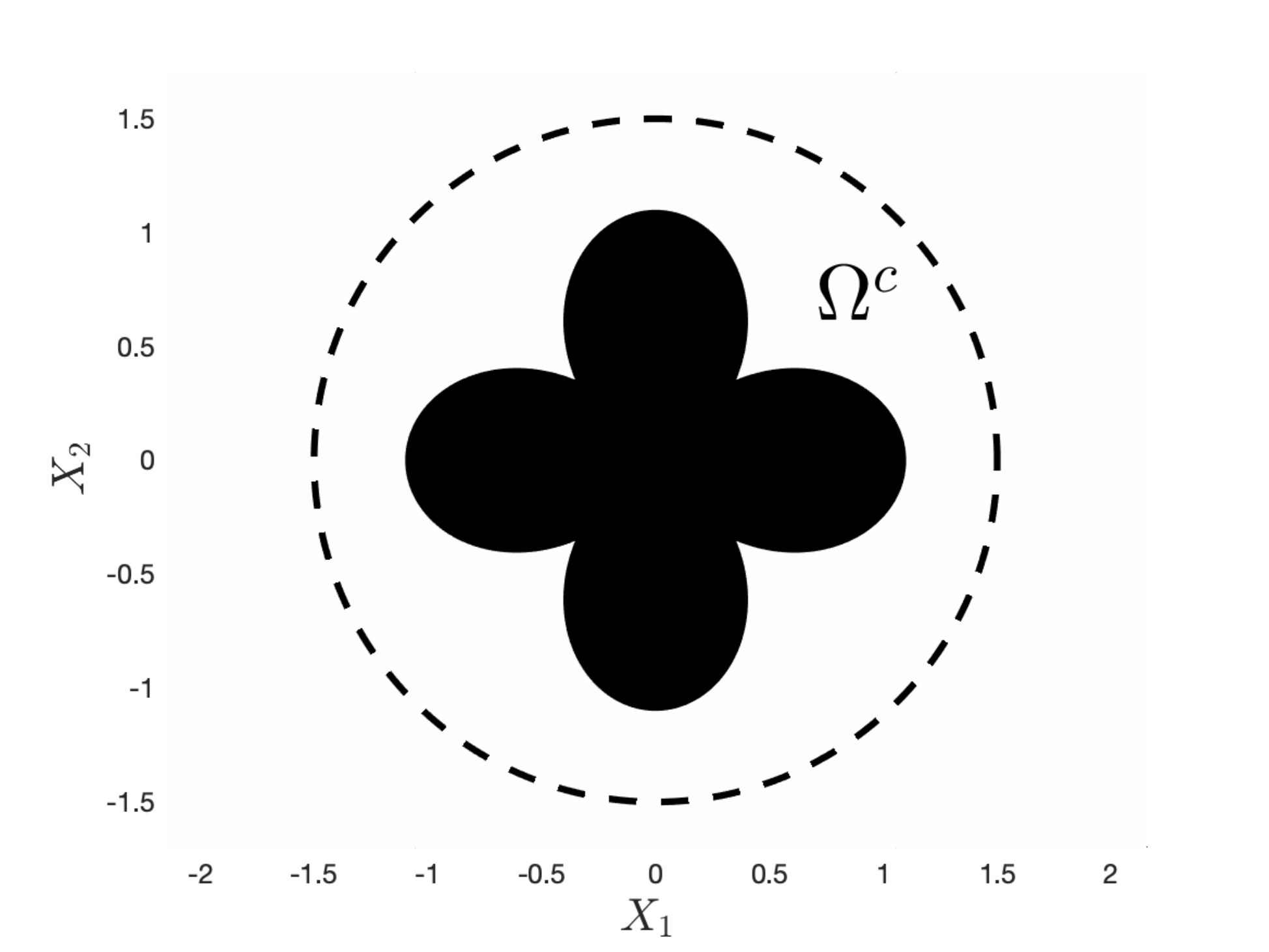}
        \caption{Object to be cloaked}\label{fig:cloakOrig}
    \end{subfigure}
    \begin{subfigure}[b]{.49\linewidth}
        \centering
        \includegraphics[width=0.99\linewidth]{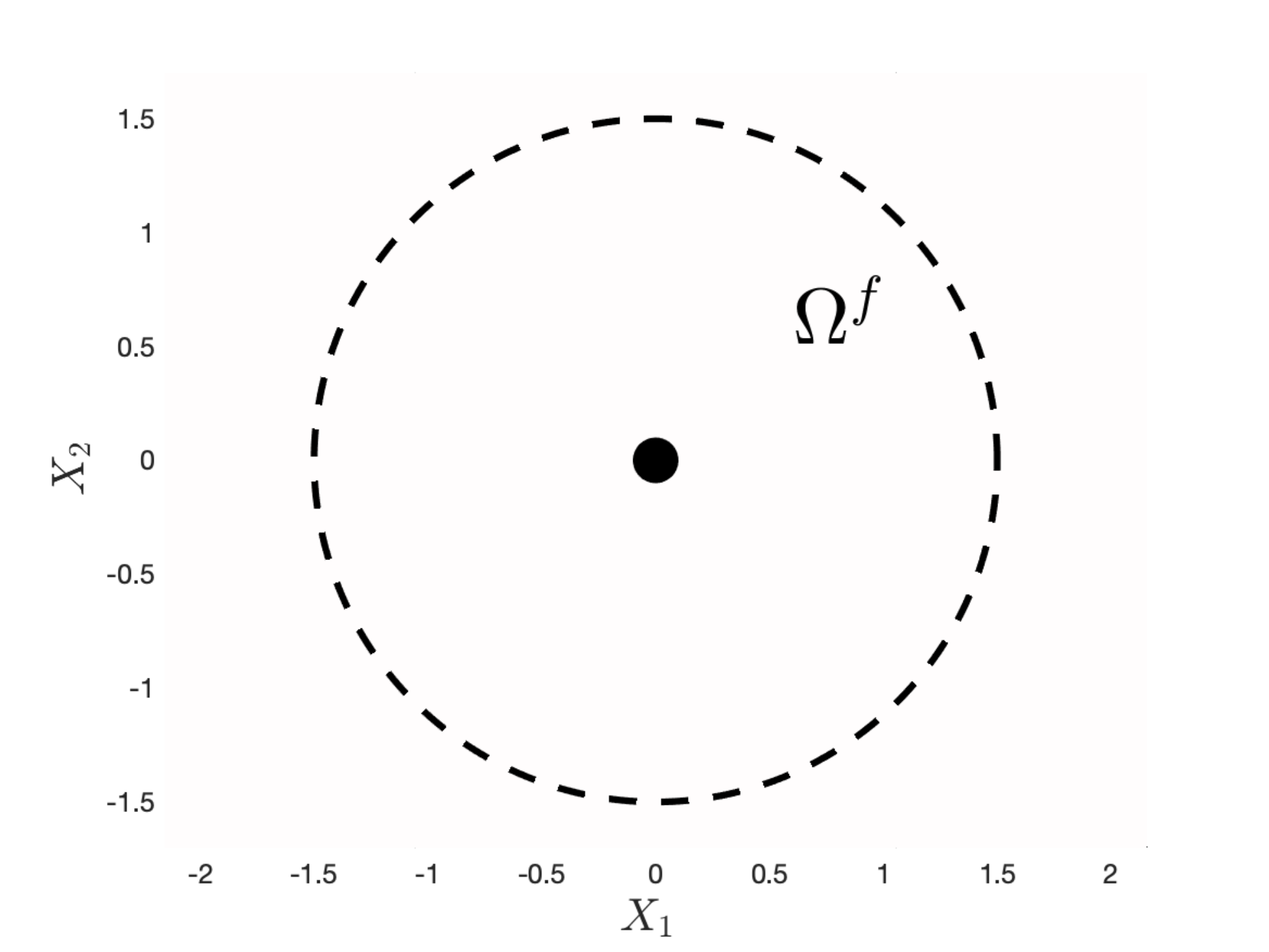}
        \caption{Fictitious object}\label{fig:cloakFict}
    \end{subfigure}
    \caption{(a) The object to be cloaked, with the dashed circle marking the extent of the cloak. $\Omega^c$ denotes the region that the cloak will occupy. (b) The small disk is a fictitious object that the cloaked object will mimic. $\Omega^f$ denotes the region between the fictitious object and the outer boundary of the cloak. }
\end{figure}

In theory we can achieve \emph{perfect} cloaking by choosing the reference object as a point, but such coordinate transformations are singular and would require singular material properties in the cloak. In practice, one usually settles for \emph{partial} or \emph{near} cloaking \cite{craster2018}, where the reference object is finite but much smaller than the original object, as in Figure \ref{fig:cloakFict}. Since the equations of Cosserat materials are invariant under coordinate transformation, they always allow for cloaking \cite{Norris2011}, at least at a mathematical level. Not all nonsingular cloaks are realizable in practice, because the material properties prescribed by the coordinate transformation may be infeasible to engineer \cite{Kadic2013,Kadic2012}.

Let us now discuss how to establish a mapping between $\Omega^c$ and $\Omega^f$ and compute the transformation gradient. If the cloak has a simple shape, for example circular or spherical, the transformation gradient can be computed analytically \cite{Brun2009,Diatta2014}. Here, we allow for more complicated objects and cloak shapes. There may be many ways to do this and our approach is just one option.
We grid $\Omega^c$ and $\Omega^f$ with grids whose block topology match so that each block can be paired with a block in the other grid. We describe the procedure for one such pair of grid blocks. With a slight abuse of notation, let $\Omega^{c,f}$ denote the regions occupied by these blocks in what follows. In the gridding process the blocks have been associated with one-to-one coordinate mappings $\vec{X}^{c,f}$ from the unit square $\refdom$ such that
\begin{equation}
  \Omega^c = \vec{X}^c(\refdom), \quad \Omega^f = \vec{X}^f(\refdom).
\end{equation}
It follows that $G = \vec{X}^{c} \circ (\vec{X}^f)^{-1}$ is a one-to-one mapping from $\Omega^f$ to $\Omega^c$.

To determine the transformed material it remains to compute an approximation of the transformation gradient
\begin{equation}
  \mathsf{F} = \pdd{\vec{X}^c}{\vec{X}^f}.
\end{equation}
We interpolate $\vec{X}^f$ to the cloak grid (this provides flexibility because we do not need to assume anything about the number of grid points in either block). The interpolation is performed between the Cartesian reference grids in $\omega$ and $\vec{X}^f$ is treated as a grid function. Next, we apply the numerical derivative operators defined in \eqref{eq:chain_discrete} (note that the transformation gradient appearing in \eqref{eq:chain_discrete} concerns the mapping to $\refdom$ and not the mapping $G$) to compute an approximation of the inverse transformation gradient
\begin{equation}
  \mathsf{F}^{-1} = \pdd{\vec{X}^f}{\vec{X}^c}.
\end{equation}
Finally, $\mathsf{F}$ is obtained by inverting $\mathsf{F}^{-1}$.
}

 \reftwo{To illustrate the spatial heterogeneity and anisotropy of the resulting cloak, we shall need to introduce some notation. Let $c_{qp}$ and $c_{qs}$ denote quasi-P- and quasi-S-wave speeds in the cloak, and let $c_p$ and $c_s$ denote the isotropic wave speeds in the homogeneous background medium. To illustrate the spatial heterogeneity of the cloak, Figure \ref{fig:quasi} shows $\ln(c_{qp}/c_p)$ in $\Omega^c$, for a wave propagating parallel to the $X_1$-axis. We use the log-scale to better illustrate the fast variations in wave speed near the scatterer.
Figure \ref{fig:slowness} illustrates the anisotropy of the cloak by showing the slowness surface at the point $[X_1,X_2]=[0.5, 1]$, with the slowness surfaces for the isotropic background material included for reference. We remark that the cloak is spatially heterogeneous and the slowness surfaces are significantly different at other points in the cloak.
}
\begin{figure}[h]
    \begin{subfigure}[b]{.49\linewidth}
        \centering
        \includegraphics[width= 0.99\linewidth, trim=0.5cm 0cm 1cm 1cm, clip=true]{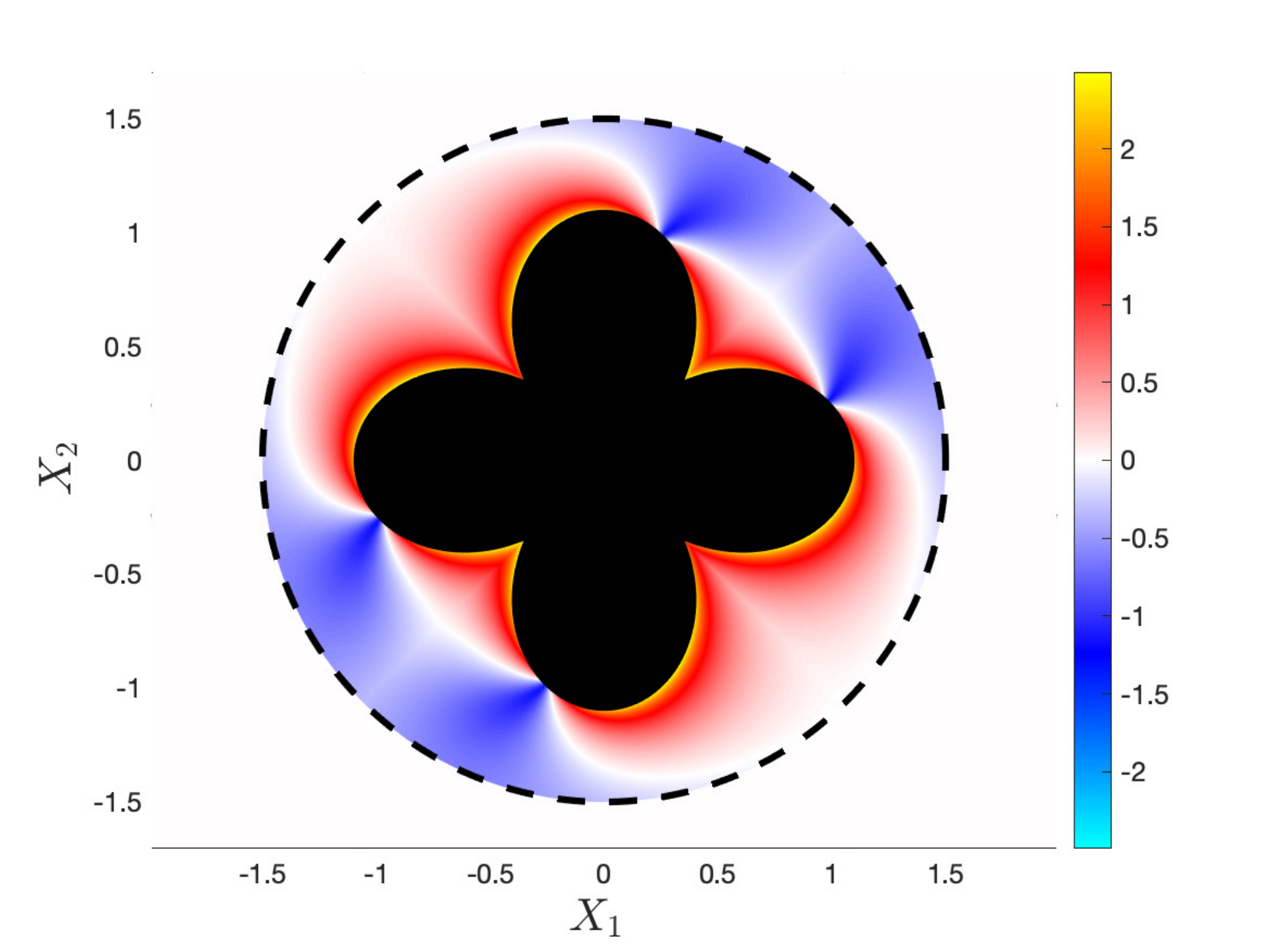}
        \caption{$\ln(c_{qp}/c_p)$}\label{fig:quasi}
    \end{subfigure}
    \begin{subfigure}[b]{.49\linewidth}
        \centering
        \includegraphics[width=0.99\linewidth]{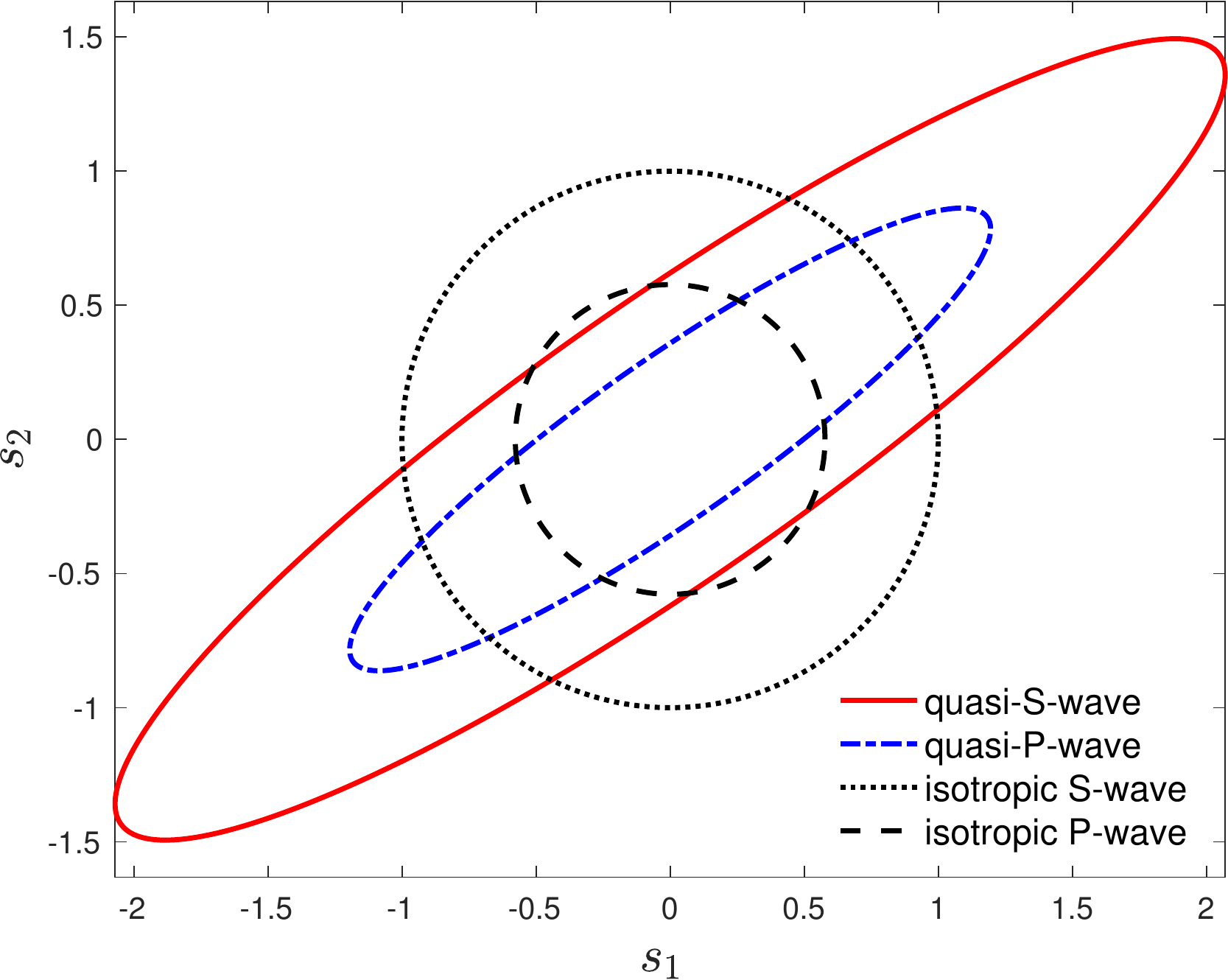}
        \caption{Slowness surfaces}\label{fig:slowness}
    \end{subfigure}
    \caption{(a) Quasi-P-wave speed relative to the background P-wave speed, for a wave propagating parallel to the $X_1$-axis, with colors corresponding to $\ln(c_{qp}/c_p)$. (b) Slowness surface at $[X_1,X_2] = [0.5, 1]$, with the slowness surfaces of the isotropic background medium included for reference. }
\end{figure}

To quantify the performance of the cloak, we probe the scatterer by applying a time-harmonic \otherchange{line} force outside of the cloak. In the presence of a time-harmonic line force, the 2D equations of motion read
\begin{equation} \label{eq:harmonic_continuous}
  \rho \ddot{u}_{\js} = \ddxi \stiffphys_{\is\js\ks\ls} \ddxk u_{\ls} + f_{\js} \delta(\vec{X}-\vec{X}_0) \cos{\alpha t},
\end{equation}
where $f_{\idx{J}}$ here is force per unit distance (not force per unit volume as in \eqref{eq:wave_eq_general}).
We use super-grid absorbing layers \cite{Appelo09_2,petersson_sjogreen_2014} to approximate \eqref{eq:harmonic_continuous} in an unbounded domain. The semidiscrete system of equations then reads
\begin{equation} \label{eq:harmonic_discrete}
  \rho \ddot{\bfu}_{\js} = \left( \cD^{\physdom}_{\is \ks}(\stiffphys_{\is\js\ks\ls}) + \mathbb{S}_{\js \ls} \right) \bfu_{\ls}  + \cE_{\js \ls} \bfut_{\ls} + f_{\js} \ddelta(\vec{X}-\vec{X}_0) \cos{\alpha t},
\end{equation}
where $\mathbb{S}_{\js \ls}$ denotes the SATs, $\ddelta$ is a discrete approximation of the $\delta$-function \cite{Petersson2016}, and $\cE_{\js \ls}$ provides dissipation in the super-grid layers. In the domain of interest, $\cE_{\js \ls}$ is zero. Inside the super-grid layers, $H \cE_{\js \ls}$ is symmetric negative semidefinite.

\reftwo{We choose to compute the time-harmonic solution to \eqref{eq:harmonic_discrete} (rather than solve the time-dependent equations) because it reveals the steady-state response of the system (instead of the response at arbitrarily selected times).} The time-harmonic solution can be written as
\begin{equation} \label{eq:harmonic_ansatz}
  \bfu_{\js} = \bfv_{\js} \cos{\alpha t} + \bfw_{\js} \sin{\alpha t} .
\end{equation}
Inserting the ansatz \eqref{eq:harmonic_ansatz} in \eqref{eq:harmonic_discrete} yields the system of equations
\begin{equation}
\begin{aligned}
  - \rho \alpha^2 \bfv_{\js} &= \left( \cD^{\physdom}_{\is \ks}(\stiffphys_{\is\js\ks\ls}) + \mathbb{S}_{\js \ls} \right) \bfv_{\ls}  + \alpha \cE_{\js \ls} \bfw_{\ls} + f_{\js} \ddelta(\vec{X}-\vec{X}_0), \\
  - \rho \alpha^2 \bfw_{\js} &= \left( \cD^{\physdom}_{\is \ks}(\stiffphys_{\is\js\ks\ls}) + \mathbb{S}_{\js \ls} \right) \bfw_{\ls}  - \alpha \cE_{\js \ls} \bfv_{\ls},
\end{aligned}
\end{equation}
which we solve for $\bfv_{\js}$ and $\bfw_{\js}$.

We choose force position $\vec{X}_0 = [1.5, 1.5]$, force vector $\vec{f} = [-\frac{1}{\sqrt{2}}, \frac{1}{\sqrt{2}}]$, angular frequency $\alpha = 2\pi$, and use the sixth order SBP-SAT method to discretize \eqref{eq:harmonic_continuous}. Figure \ref{fig:cloak_empty} shows the resulting displacement magnitude $\sqrt{\bfv_1 \circ \bfv_1 + \bfv_2 \circ \bfv_2}$, where $\circ$ denotes the Hadamard product, in free space, with no scatterer present (corresponding plots of $\vec{\bfw}$ are qualitatively similar and are omitted here). A perfect cloak would yield the same displacement outside of the cloak. Figure \ref{fig:cloak_orig} shows the displacement field in the presence of the uncloaked scatterer. There are obvious differences compared to the free-space solution---in particular the shadow zone to the southwest of the scatterer. Figure \ref{fig:cloak_cloaked} shows the displacement around the cloaked scatterer. Outside the cloak, the displacement is quite similar to the free-space solution, with minor differences---note in particular the faint shadow zone to the southwest of the scatterer. Outside the cloak, the displacement due to the cloaked scatterer is in fact identical (up to numerical errors) to the displacement produced by the small disk-shaped scatterer in Figure \ref{fig:cloak_ref}, with homogeneous material parameters. In this numerical experiment we could easily improve the performance of the cloak by making the disk in Figure \ref{fig:cloak_ref} even smaller, but that would make the coordinate transformation near-singular and would likely make the prescribed cloak material more difficult to engineer.
\begin{figure}[h]
\centering
  \begin{subfigure}[b]{.45\linewidth}
        \centering
        \includegraphics[width=0.85\linewidth, trim=1.8cm 0.1cm 2cm 0.5cm, clip=true]{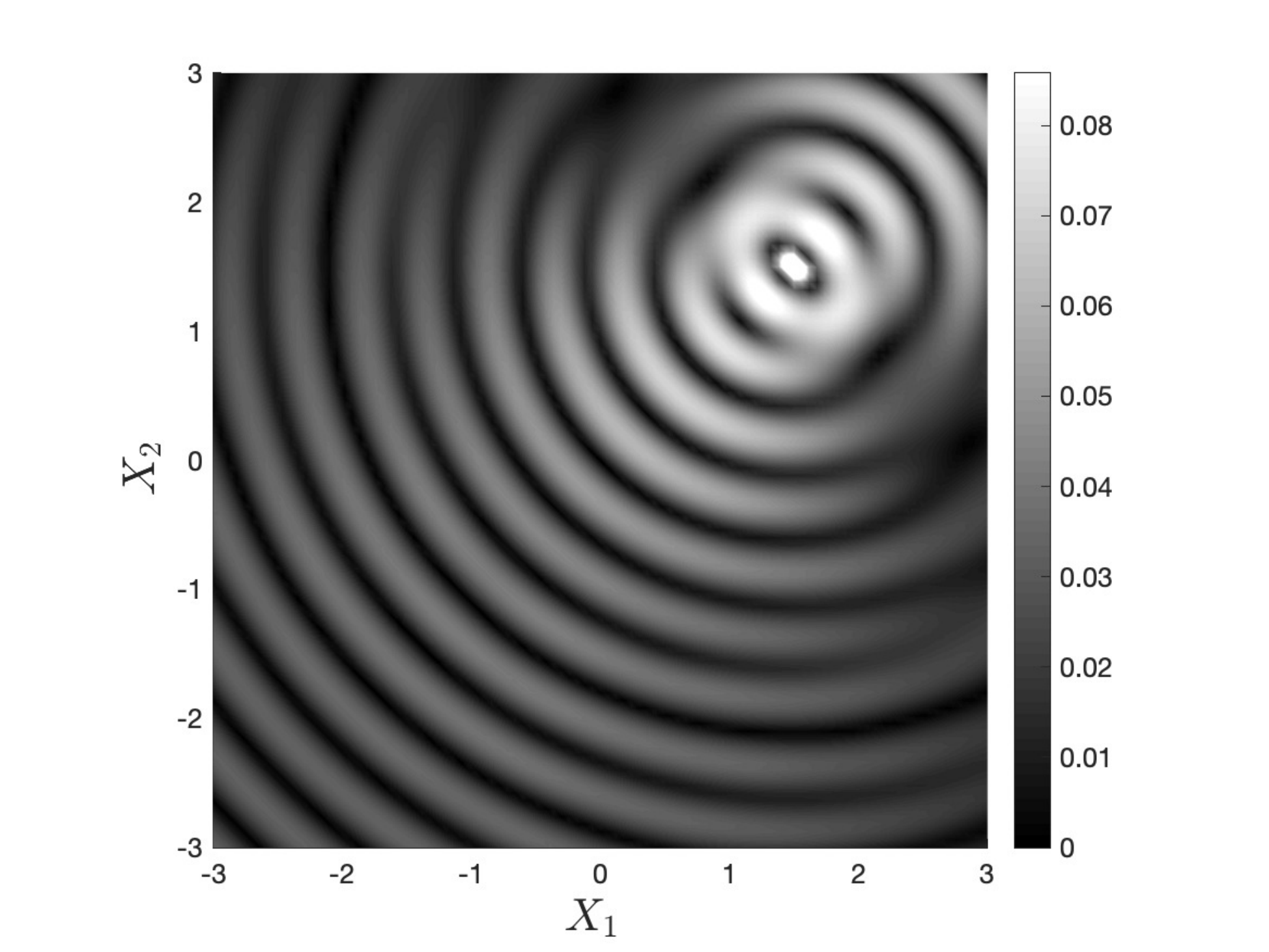}
        \caption{No scatterer}\label{fig:cloak_empty}
    \end{subfigure}
    \begin{subfigure}[b]{.45\linewidth}
        \centering
        \includegraphics[width= 0.85\linewidth, trim=1.8cm 0.1cm 2cm 0.5cm, clip=true]{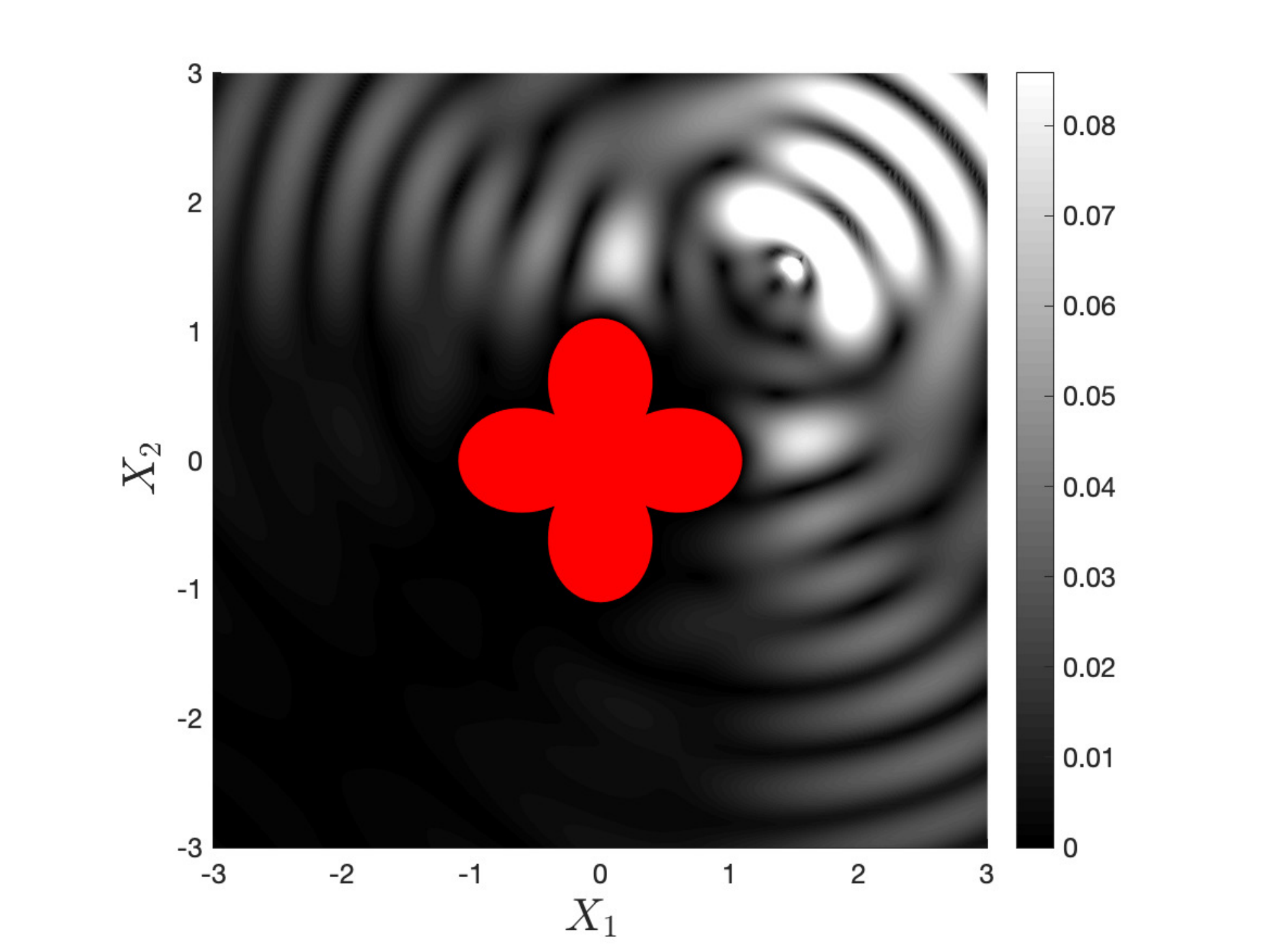}
        \caption{Scatterer without cloak}\label{fig:cloak_orig}
    \end{subfigure}
    \\
    \begin{subfigure}[b]{.45\linewidth}
        \centering
        \includegraphics[width= 0.85\linewidth, trim=1.8cm 0.1cm 2cm 0.5cm, clip=true]{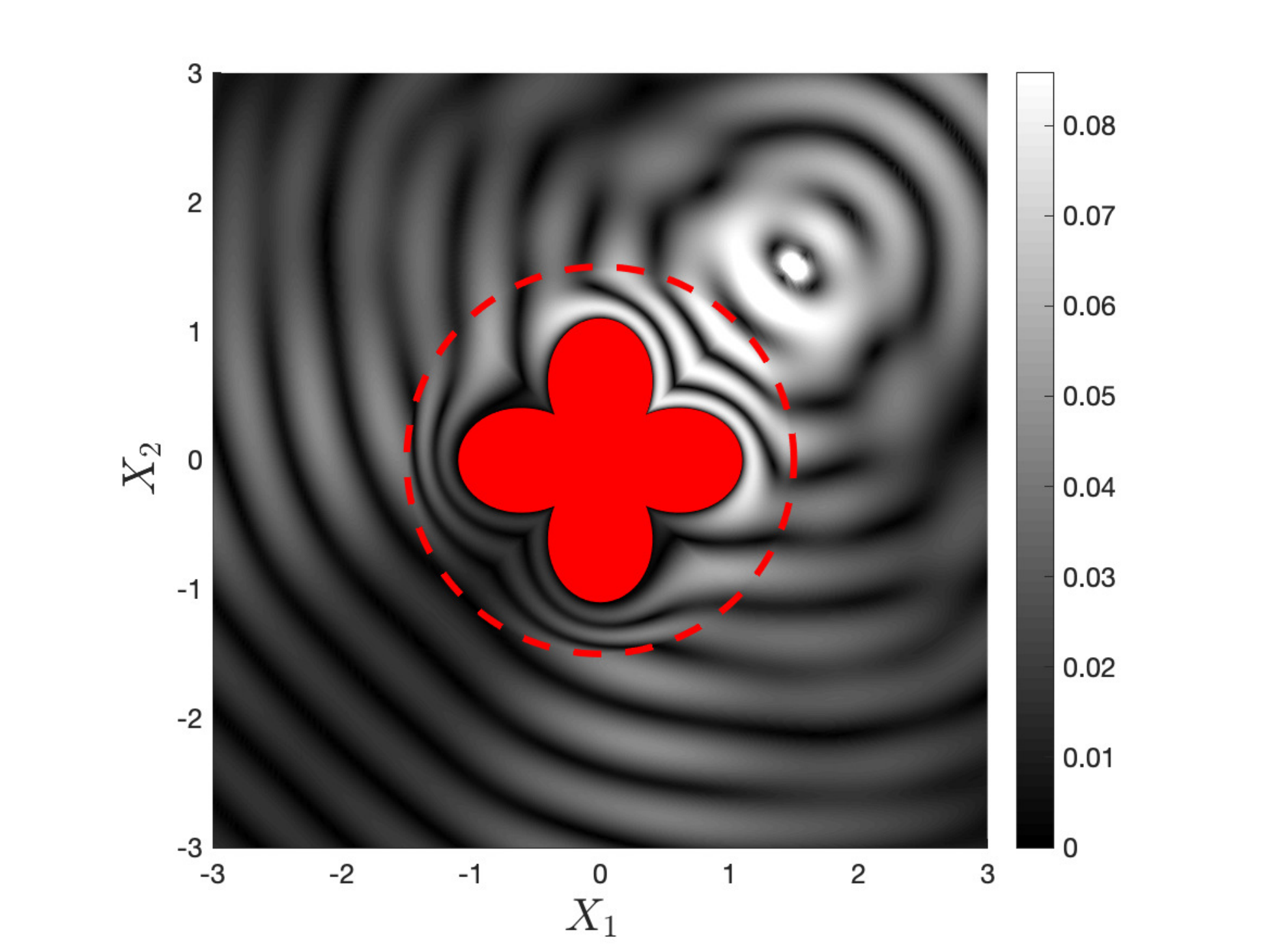}
        \caption{Cloaked scatterer}\label{fig:cloak_cloaked}
    \end{subfigure}
    \begin{subfigure}[b]{.45\linewidth}
        \centering
        \includegraphics[width=0.85\linewidth, trim=1.8cm 0.1cm 2cm 0.5cm, clip=true]{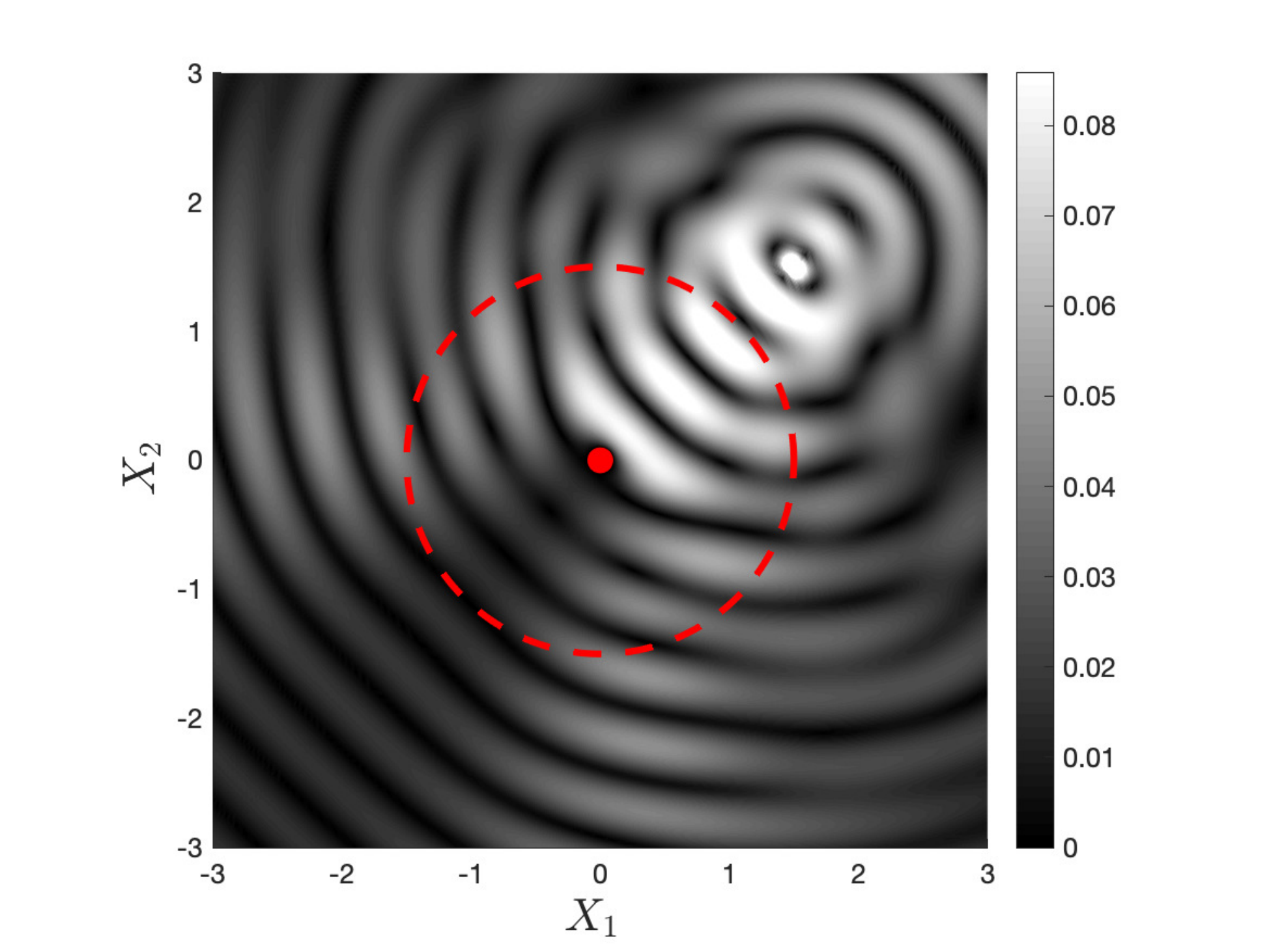}
        \caption{\reftwo{Fictitious} scatterer }\label{fig:cloak_ref}
    \end{subfigure}
    \caption{Plots of displacement magnitude $\sqrt{\bfv_1 \circ \bfv_1 + \bfv_2 \circ \bfv_2}$ caused by a time-harmonic point force applied at $\vec{X}=[1.5, 1.5]$ with (a) no scatterer;  (b) an uncloaked scatterer; (c) a cloaked scatterer; and (d) the small \reftwo{fictitious} scatterer that is equivalent to the cloaked scatterer. }
    \label{fig:cloak}
\end{figure}

\subsection{Seismic imaging in mountainous regions}
The topic of the second application problem is seismic imaging on land, in particular in mountainous regions where topographical variations may be large. Other studies that have developed finite difference methods on curvilinear grids for use in seismic imaging in the presence of topography include \cite{Shragge2016,Shragge2017}. As a structural model representative of mountainous regions we choose the SEG SEAM Foothills model \cite{Oristaglio2016}, which is an isotropic model with heterogeneous material properties and very pronounced topography. We select a vertical cross section of the original 3D structural model with pressure and shear wave speeds as shown in Figures \ref{fig:vp} and \ref{fig:vs}.
\begin{figure}[h]
  \begin{subfigure}[b]{.49\linewidth}
    \centering
    \includegraphics[width=1\linewidth, trim=1cm 1cm 1cm 1cm, clip=true]{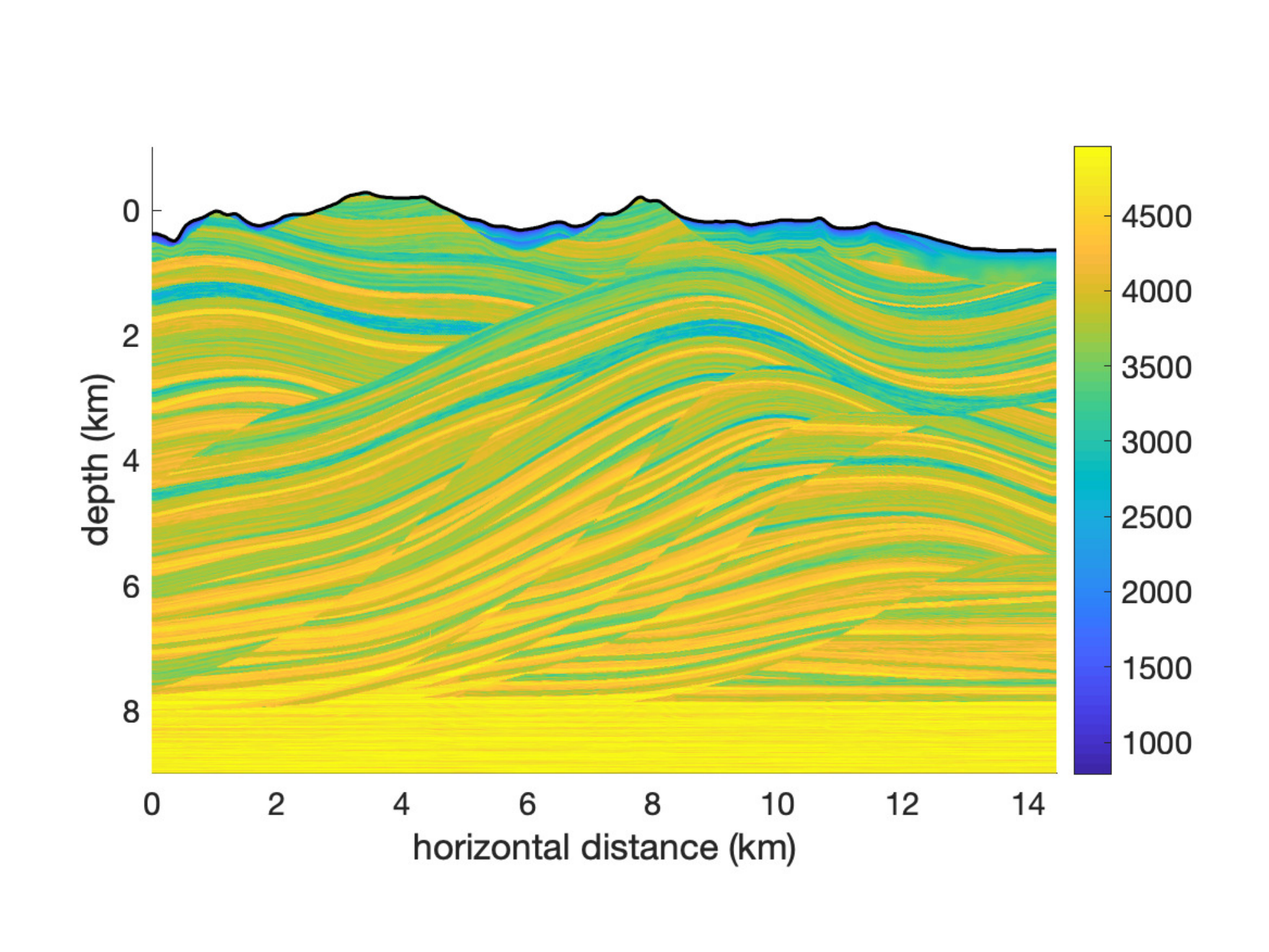}
    \caption{Pressure wave speed (m/s)}\label{fig:vp}
\end{subfigure}
\begin{subfigure}[b]{.49\linewidth}
    \centering
    \includegraphics[width=1\linewidth, trim=1cm 1cm 1cm 1cm, clip=true]{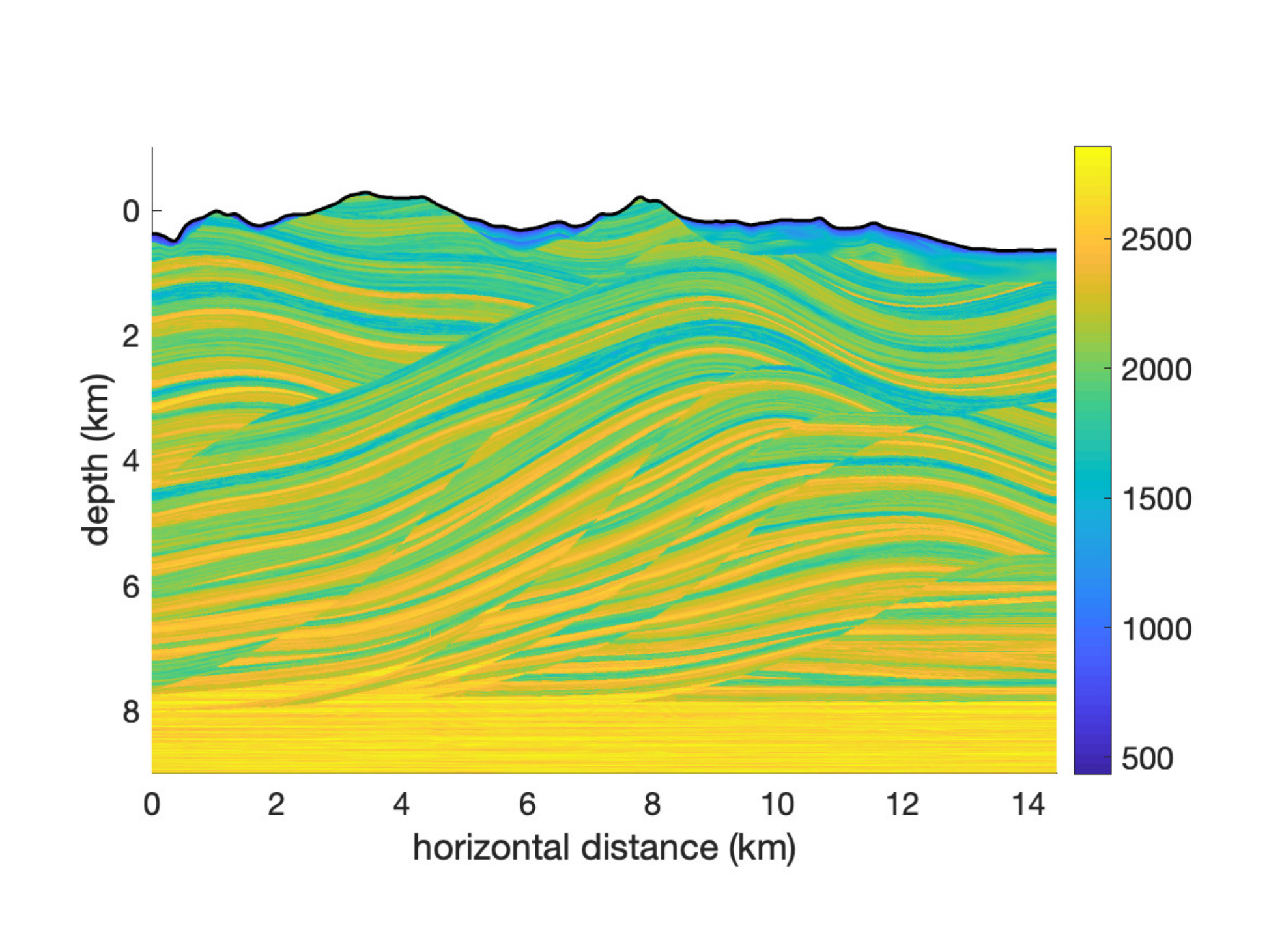}
    \caption{Shear wave speed (m/s)}\label{fig:vs}
\end{subfigure}
\caption{\otherchange{Wave speeds in a vertical cross section of the SEG SEAM Foothills model}}
\end{figure}
To mimic a vibrator source, we impose homogeneous traction boundary conditions on the free surface and apply a vertical point force at the surface (alternatively, one could impose inhomogeneous traction boundary conditions, which, for a particular choice of the discrete delta function, yields an identical semi-discrete problem). \otherchange{Note that with the wide-stencil method the discrete $\delta$-function must satisfy appropriate \emph{smoothness conditions} \cite{Petersson2016}, which we have incorporated}. The force vector is (note that we use the symbol $\delta$ to denote both the Kronecker delta and the Dirac delta function)
\begin{equation}
   f_\js =  -\delta_{\js 2} \hat{f} W(t) \delta(\vec{X} - \vec{X}_0),
\end{equation}
where $\hat{f}$ is a scalar force amplitude and $W(t)$ denotes the Ricker wavelet \cite{Ricker1943, Ricker1944} with peak frequency $\alpha_P$ centered at time $t_0$, i.e.,
\begin{equation}
    W(t) = (1-2 \pi^2 \otherchange{\alpha_{P}^2} (t-t_0)^2 ) e^{-\pi^2 \otherchange{\alpha_{P}^2} (t-t_0)^2}.
\end{equation}
\refthree{To further characterize the source we define the maximum source frequency $\alpha_{M}>\alpha_{P}$ as the frequency for which the amplitude spectrum is 5\% of peak amplitude, i.e.,
 \begin{equation}
   \abs{\mathcal{F}[W](\alpha_{M})} = 0.05\abs{\mathcal{F}[W](\alpha_{P})},
 \end{equation}}
where $\mathcal{F}[W]$ denotes the Fourier transform of $W$. This definition yields $\alpha_M \approx 2.40 \alpha_P$. We think of $\alpha_M$ as the highest frequency that needs to be resolved for accurate simulation results.
We choose \otherchange{$\alpha_{P} = 4$ Hz, which yields $\alpha_M = 9.59$ Hz. We further set} $t_0 = \otherchange{\alpha_P^{-1}}$ and let the horizontal position of the point force be $X_1 = 6$ km. We select $\hat{\rho} = 1340$ kg/m$^{3}$ and $\hat{c}_s = 600$ m/s as reference values for density and shear wave speed near the source and define nondimensional particle velocity $\dot{\tilde{u}}_\is$ as
\begin{equation}
\dot{\tilde{u}}_\is = \frac{\hat{\rho} \hat{c}_s^2}{\hat{f} \otherchange{\alpha_{P}}} \dot{u}_\is.
\end{equation}

Our implementation utilizes the PETSc \cite{petsc-efficient,petsc-user-ref,petsc-web-page} implementation of the classical fourth order Runge--Kutta method in the TS ODE/DAE solver library \cite{abhyankar2018petsc2}. \otherchange{We set $\mbox{CFL}=0.4$ and} use the sixth order SBP-SAT method with grid spacing $\approx 7$ m (in the physical domain $\Omega$), \refthree{which corresponds to 7.2 points per wavelength ($\mbox{PPWL}$). We compute PPWL based on the maximum frequency $\alpha_M$ and the minimum shear velocity $c_s^{min}$ (here equal to 500 m/s) according to
\begin{equation}
  \mbox{PPWL} = \frac{c_s^{min}}{\alpha_M \Delta X_1},
\end{equation}
where $\Delta X_1$ denotes the horizontal grid spacing.} The grid is generated by transfinite interpolation with uniform spacing in the horizontal direction. We again use super-grid absorbing layers at the artificial boundaries.  We use only one grid block to discretize the domain shown in Figure \ref{fig:vp} and hence differentiate across the discontinuities in material parameters associated with the many media layers. While this constitutes a first order error, we remark that the method remains energy stable.

The top three rows of Figure \ref{fig:snapshot} show snapshots of particle velocity in the vertical direction. The bottom panel shows a space-time plot (shot gather) of vertical particle velocity recorded at the surface. \refthree{Figure \ref{fig:seismogramNarrow} shows seismograms, recorded at the surface at horizontal position $X_1=10$ km. With 1.8 PPWL, the computations are under-resolved. The narrow-stencil simulations with 3.6 PPWL and 7.2 PPWL show good agreement,  indicating that with 7.2 PPWL the numerical errors are small. This is further corroborated by the fact that the wide- and narrow-stencil seismograms with 7.2 PPWL are practically indistinguishable. To assess the performance of the wide- and narrow-stencil methods in marginally resolved simulations, Figure \ref{fig:seismogramWideNarrow} shows seismograms generated with 3.6 PPWL along with a 7.2 PPWL reference seismogram. The wide- and narrow-stencil methods produce slightly different seismograms.
}

To assess the influence of the structural model, \otherchange{we repeat the experiments above} with constant material parameters $\rho = 2300$ kg/m$^3$, pressure wave speed $c_p = 3500$ m/s, and shear wave speed $c_s = 2000$ m/s (note that PPWL values for this example are based on this value of $c_s$). Figure \ref{fig:snapshot_const} shows snapshots of vertical particle velocity. Dashed vertical lines in the bottom panel relate scattering of waves to topographical features. \refthree{Figure \ref{fig:seismogramNarrowConst} shows seismograms, recorded at the surface at horizontal position $X_1=10$ km. The 7.2 PPWL simulation shows excellent agreement with the 28.9 PPWL simulations. Figure \ref{fig:seismogramWideNarrowConst} compares the wide- and narrow-stencil seismograms generated with 2.9 PPWL. In this case, the narrow-stencil method is a clear winner; the wide-stencil method significantly underpredicts the amplitude of the largest peak and produces a tail of waves of much larger amplitude than in the reference solution.
}
\begin{figure}[h]
  \begin{subfigure}[b]{.49\linewidth}
        \centering
        \includegraphics[width=0.8\linewidth, trim=2.5cm 0.8cm 2.0cm 0.5cm, clip=true]{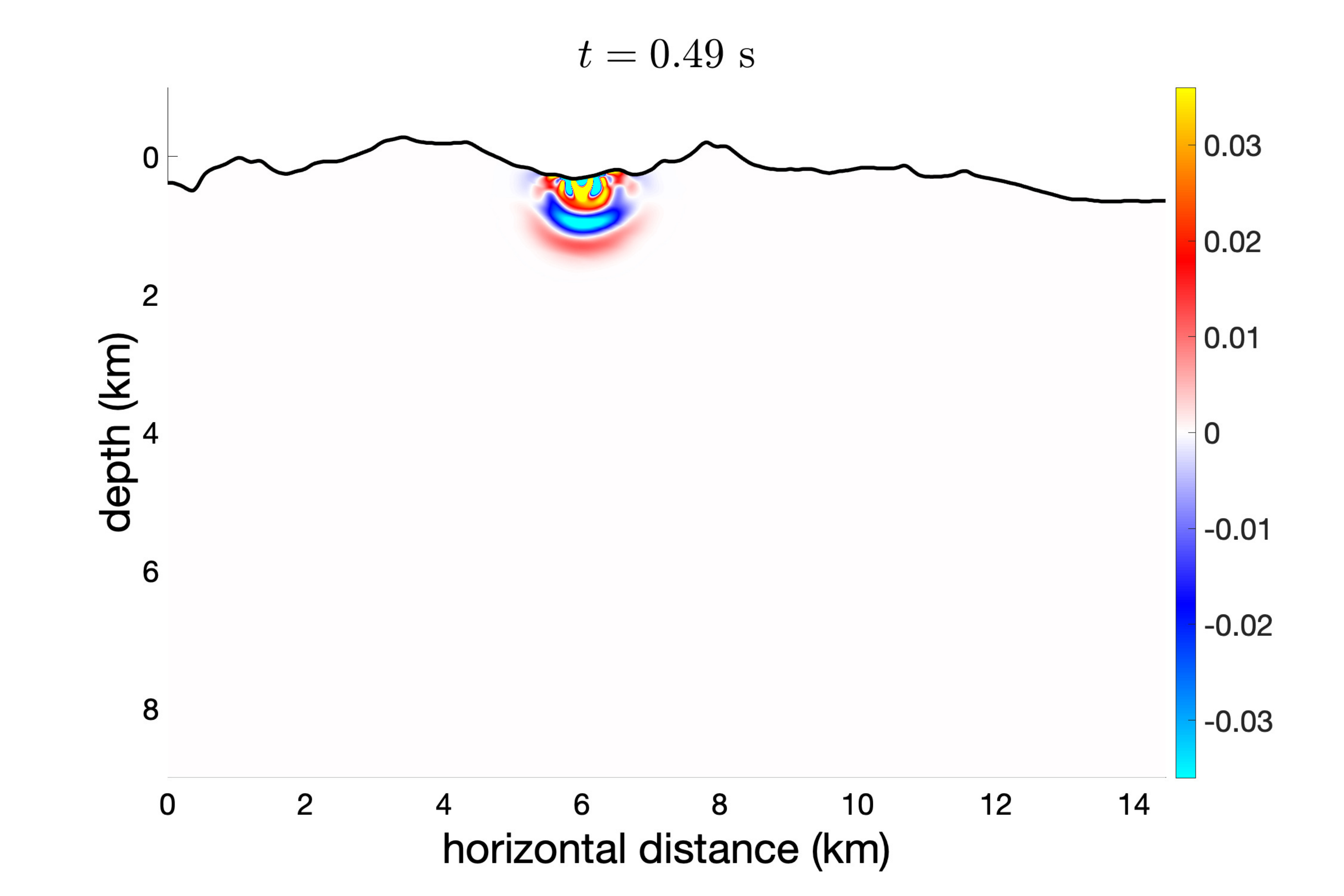}
        \label{fig:fh1}
    \end{subfigure}
    \begin{subfigure}[b]{.49\linewidth}
        \centering
        \includegraphics[width=0.8\linewidth, trim=2.5cm 0.8cm 2.0cm 0.5cm, clip=true]{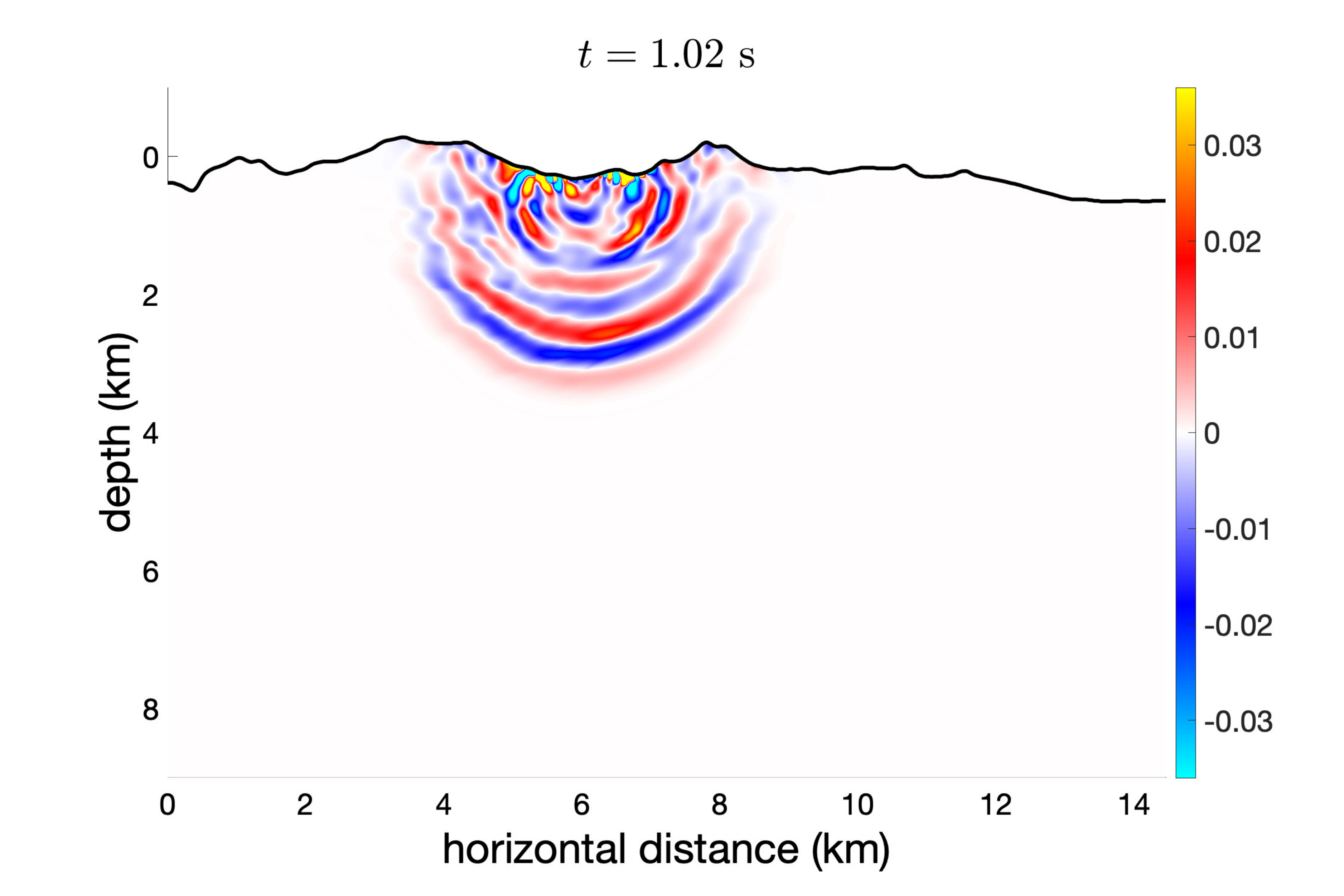}
        \label{fig:fh2}
    \end{subfigure}
    \\
    \begin{subfigure}[b]{.49\linewidth}
        \centering
        \includegraphics[width=0.8\linewidth, trim=2.5cm 0.8cm 2.0cm 0.5cm, clip=true]{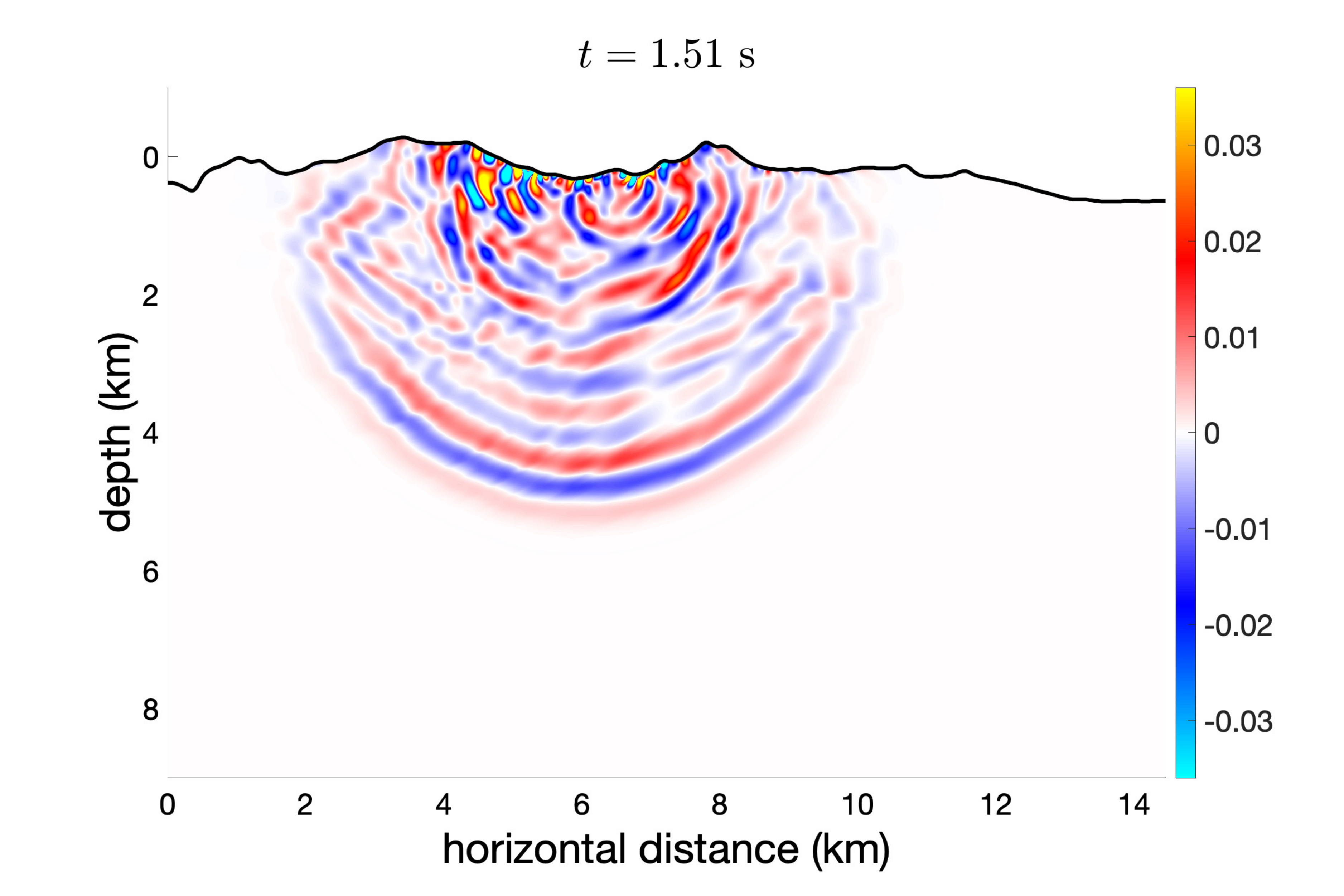}
        \label{fig:fh3}
    \end{subfigure}
    \begin{subfigure}[b]{.49\linewidth}
        \centering
        \includegraphics[width=0.8\linewidth, trim=2.5cm 0.8cm 2.0cm 0.5cm, clip=true]{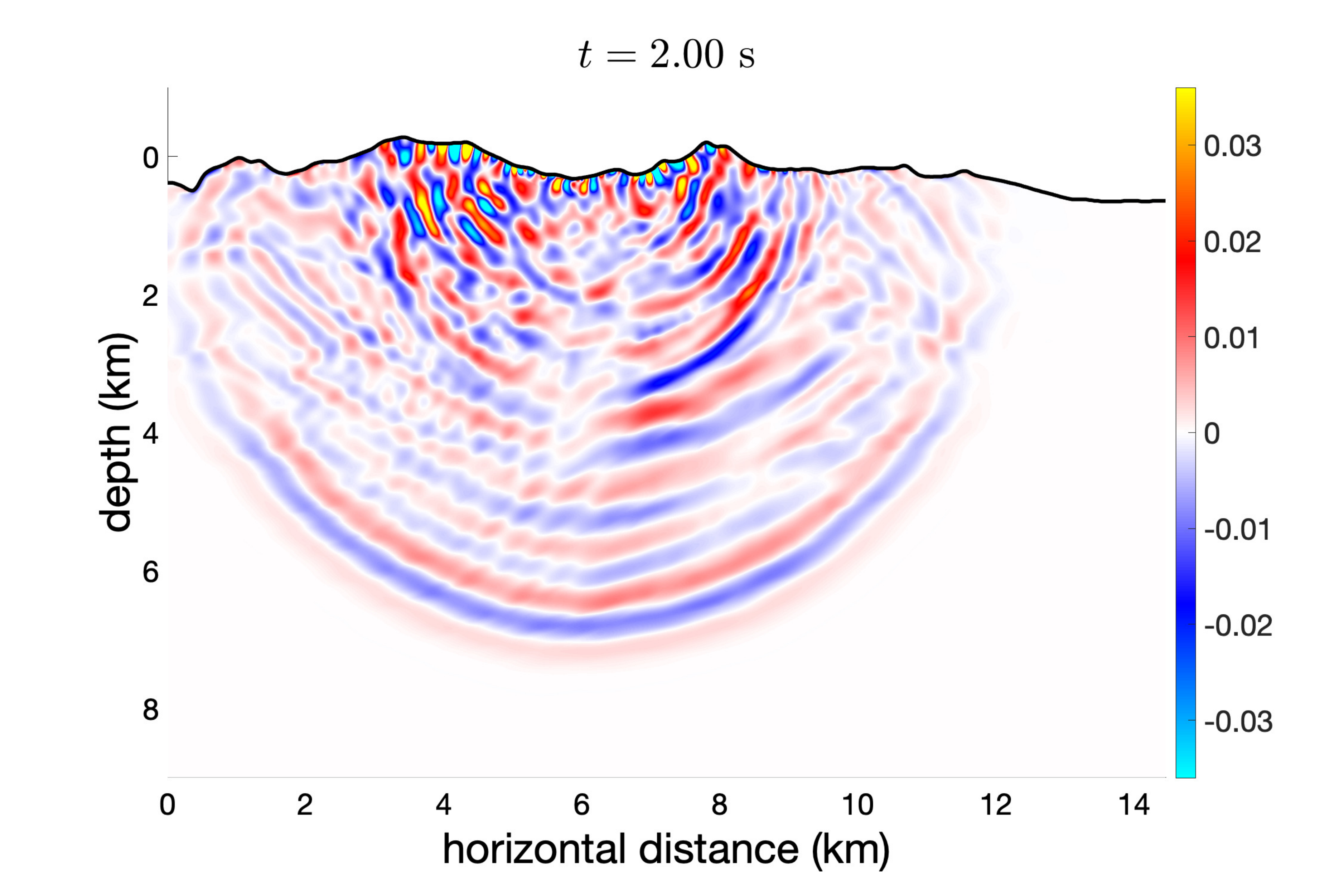}
        \label{fig:fh4}
    \end{subfigure}
    \\
    \begin{subfigure}[b]{.49\linewidth}
        \centering
        \includegraphics[width=0.8\linewidth, trim=2.5cm 0.8cm 2.0cm 0.5cm, clip=true]{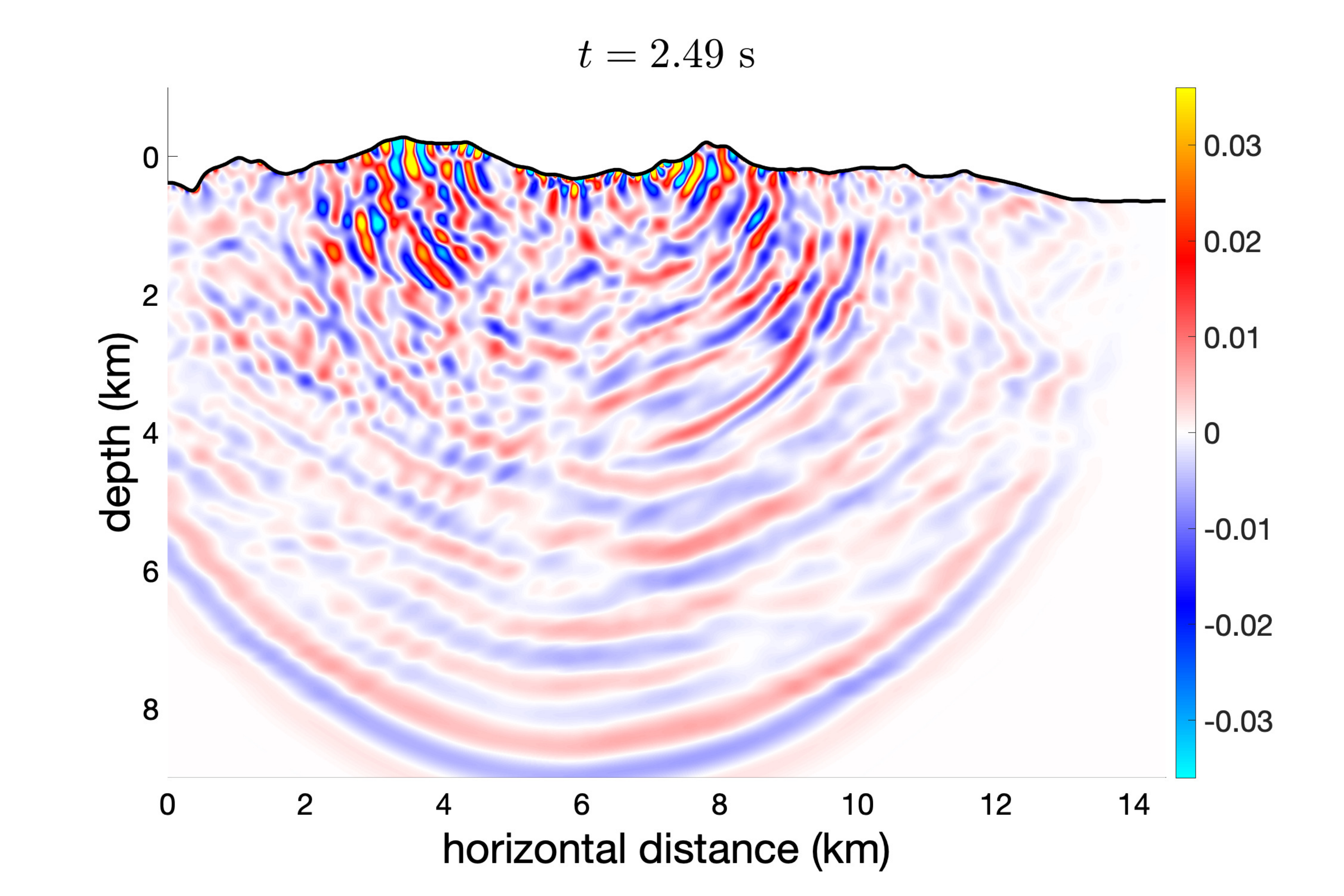}
        \label{fig:fh5}
    \end{subfigure}
    \begin{subfigure}[b]{.49\linewidth}
        \centering
        \includegraphics[width=0.8\linewidth, trim=2.5cm 0.8cm 2.0cm 0.5cm, clip=true]{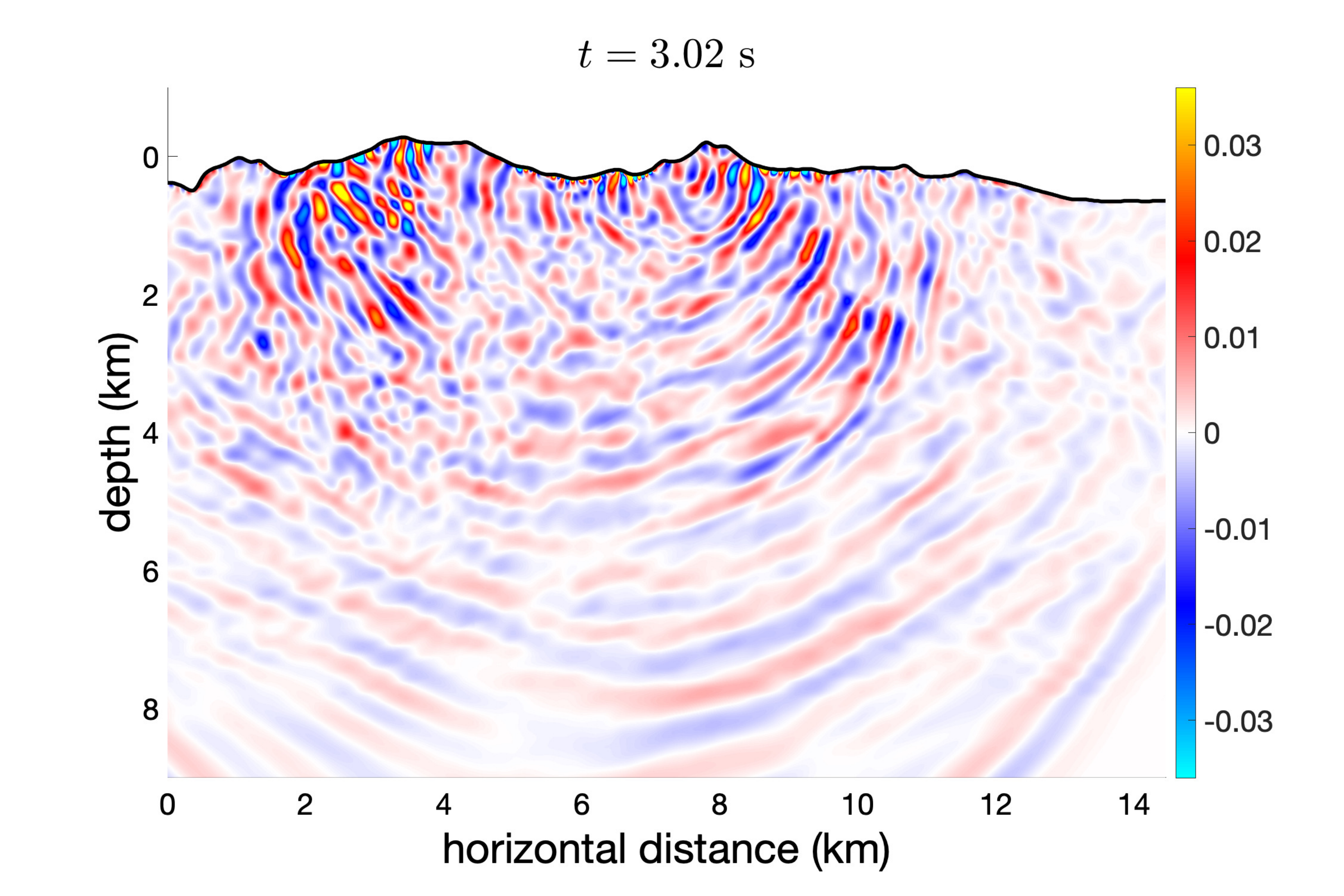}
        \label{fig:fh6}
    \end{subfigure}
    \\
    \centering
    \begin{subfigure}[b]{0.8\linewidth}
        \centering
        \includegraphics[width=1\linewidth, trim=0cm 0.1cm 0cm 0cm, clip=true]{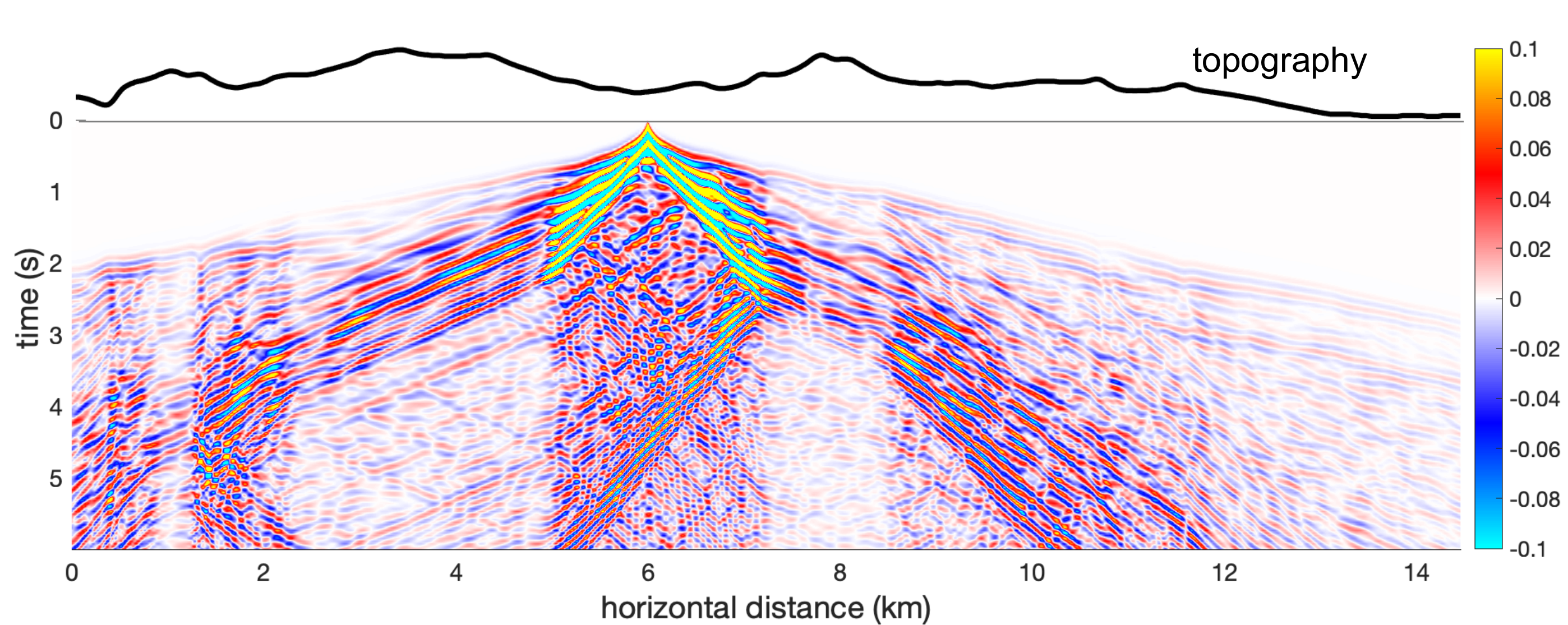}
        \label{fig:spaceTime_f10}
    \end{subfigure}
    \caption{Plots of $\dot{\tilde{u}}_2$, the vertical component of particle velocity, with the Foothills structural model. The top three rows show snapshots of $\dot{\tilde{u}}_2$ at different times. The bottom panel shows a space-time plot (shot gather) of $\dot{\tilde{u}}_2$, recorded at the surface.}
    \label{fig:snapshot}
\end{figure}
\begin{figure}[h]
  \begin{subfigure}[b]{.49\linewidth}
    \centering
    \includegraphics[width=1\linewidth, trim=0cm 0cm 0cm 0cm, clip=true]{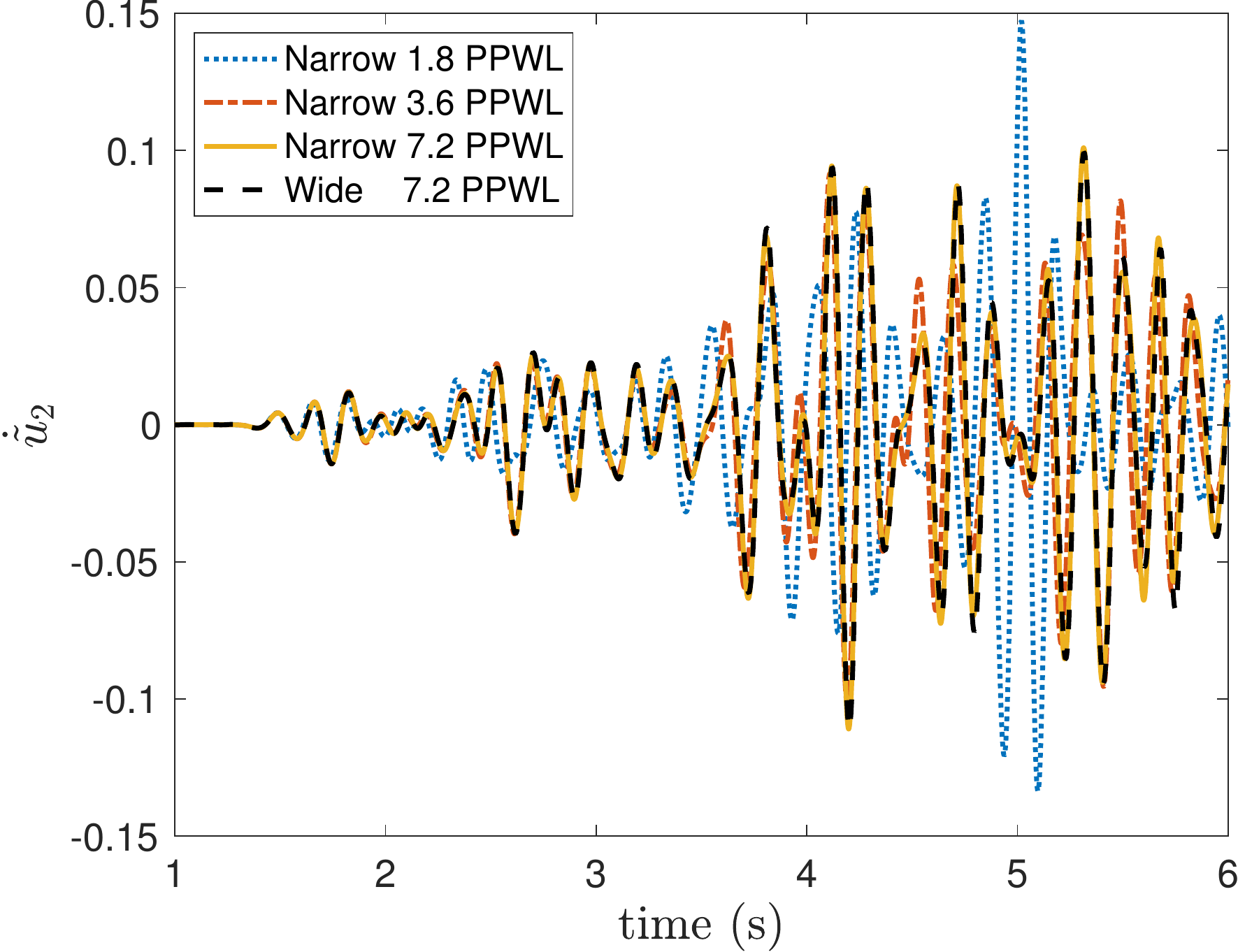}
    \caption{\refthree{Self-refinement (narrow scheme)}}\label{fig:seismogramNarrow}
\end{subfigure}
\begin{subfigure}[b]{.49\linewidth}
    \centering
    \includegraphics[width=1\linewidth, trim=0cm 0cm 0cm 0cm, clip=true]{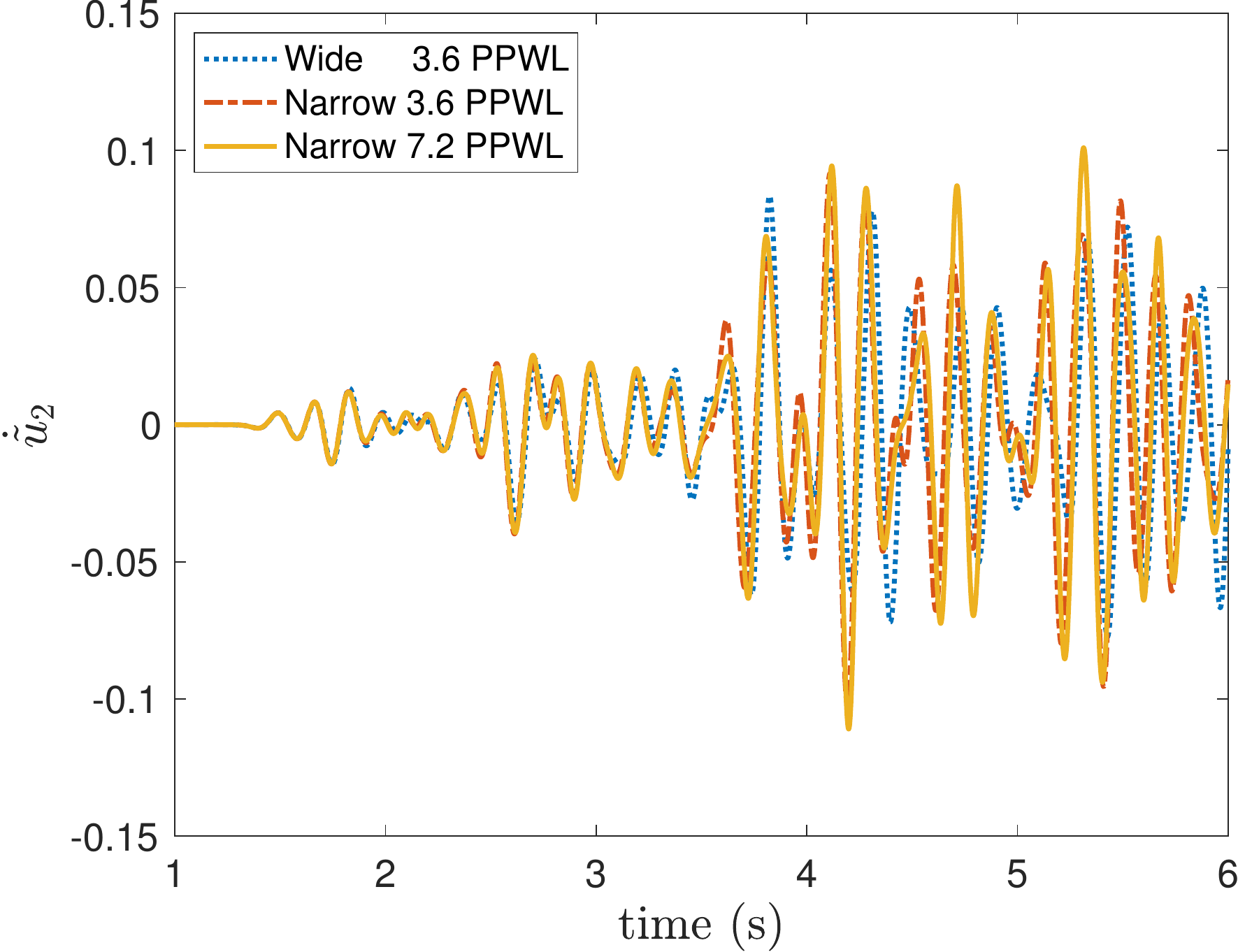}
    \caption{\refthree{Comparing wide and narrow schemes}}\label{fig:seismogramWideNarrow}
\end{subfigure}
\caption{\refthree{Seismograms of vertical particle velocity $\dot{\tilde{u}}_2$, recorded at the surface at $X_1= 10$ km, with the Foothills structural model. PPWL denotes points per wavelength, estimated using the minimum shear wave speed and maximum source frequency. (a) Seismograms generated by the narrow-stencil method at different levels of grid-refinement. (b) Seismograms generated by the wide- and narrow-stencil methods on a coarse grid, compared to a reference solution on a fine grid.}}
\end{figure}
\begin{figure}[h]
  \begin{subfigure}[b]{.49\linewidth}
    \centering
    \includegraphics[width=1\linewidth, trim=0cm 0cm 0cm 0cm, clip=true]{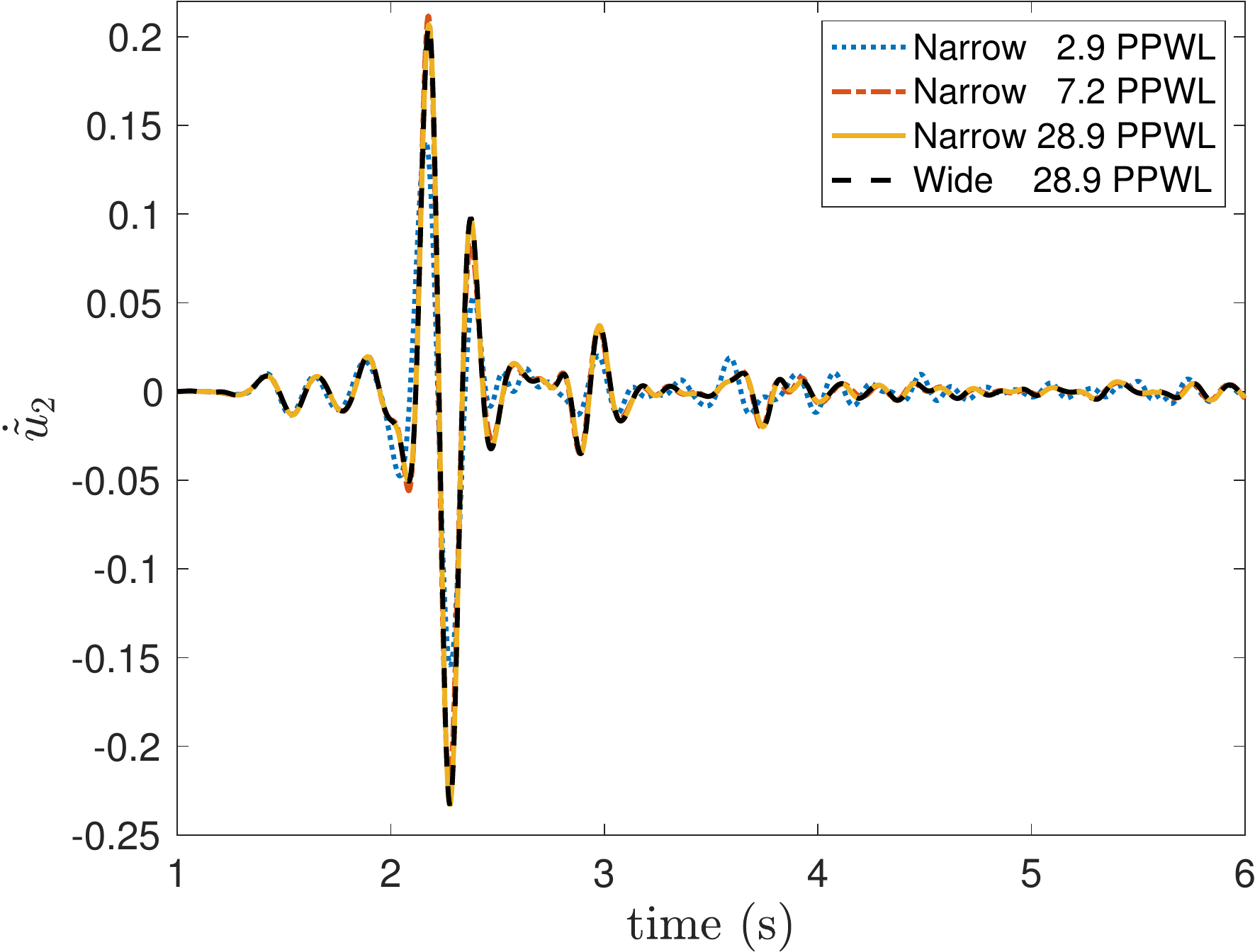}
    \caption{\refthree{Self-refinement (narrow scheme)}}\label{fig:seismogramNarrowConst}
\end{subfigure}
\begin{subfigure}[b]{.49\linewidth}
    \centering
    \includegraphics[width=1\linewidth, trim=0cm 0cm 0cm 0cm, clip=true]{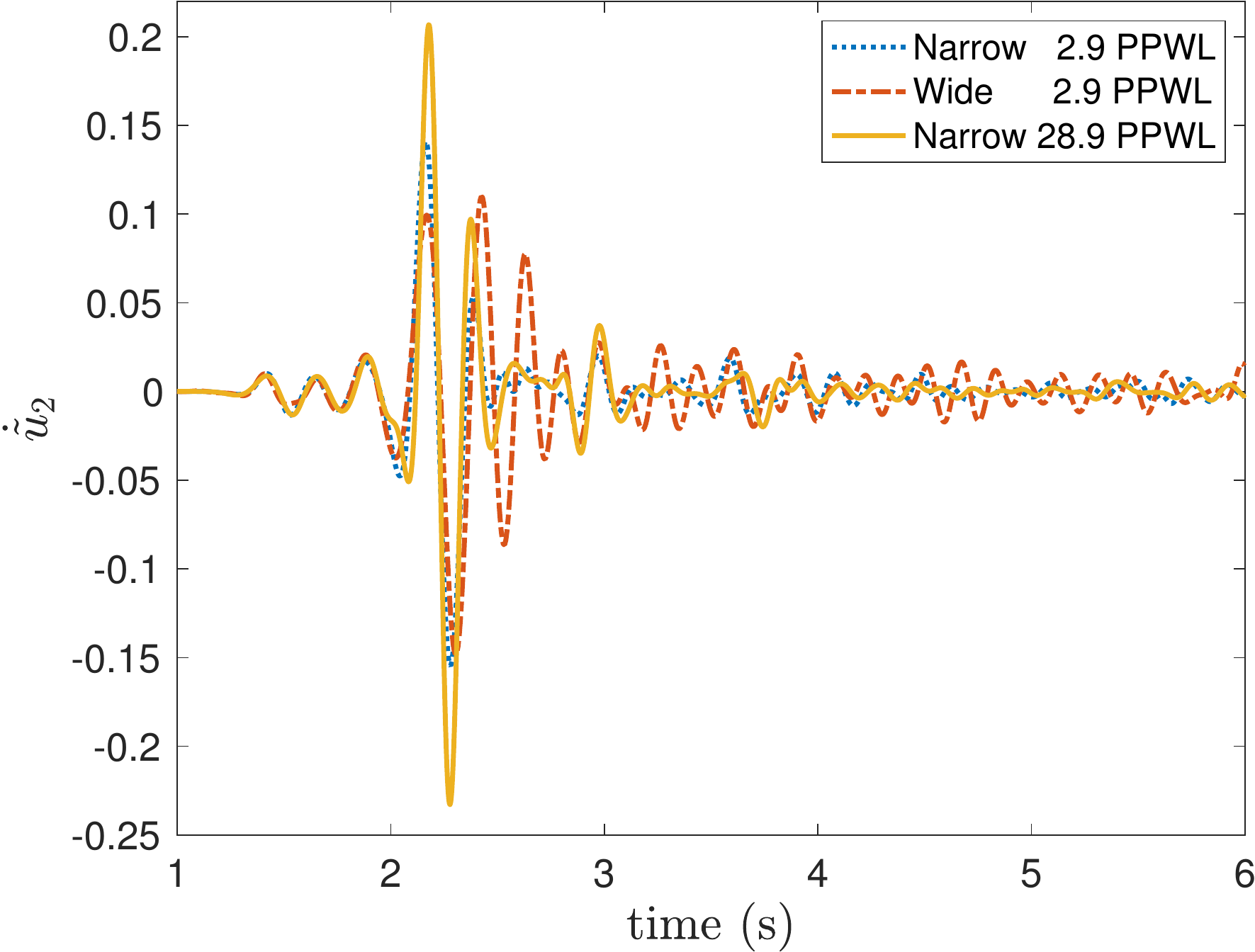}
    \caption{\refthree{Comparing wide and narrow schemes}}\label{fig:seismogramWideNarrowConst}
\end{subfigure}
\caption{\refthree{Seismograms of vertical particle velocity $\dot{\tilde{u}}_2$, recorded at the surface at $X_1= 10$ km, with constant material parameters. PPWL denotes points per wavelength, estimated using the shear wave speed and maximum source frequency. (a) Seismograms generated by the narrow-stencil method at different levels of grid-refinement. (b) Seismograms generated by the wide- and narrow-stencil methods on a coarse grid, compared to a reference solution on a fine grid.}}
\end{figure}
\begin{figure}[h]
  \begin{subfigure}[b]{.49\linewidth}
        \centering
        \includegraphics[width=0.8\linewidth, trim=2.5cm 0.7cm 2.0cm 0.2cm, clip=true]{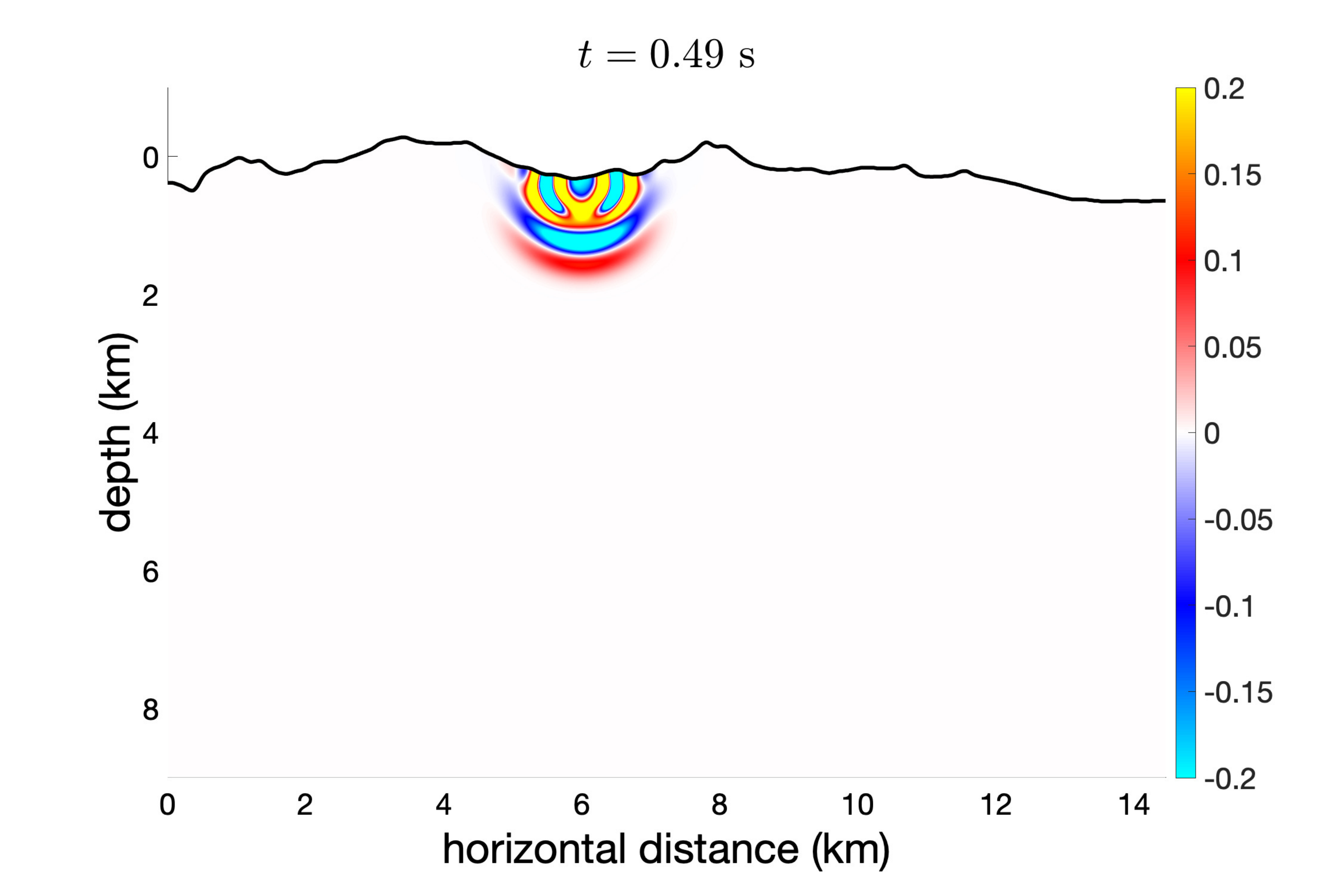}
    \end{subfigure}
    \begin{subfigure}[b]{.49\linewidth}
        \centering
        \includegraphics[width=0.8\linewidth, trim=2.5cm 0.7cm 2.0cm 0.2cm, clip=true]{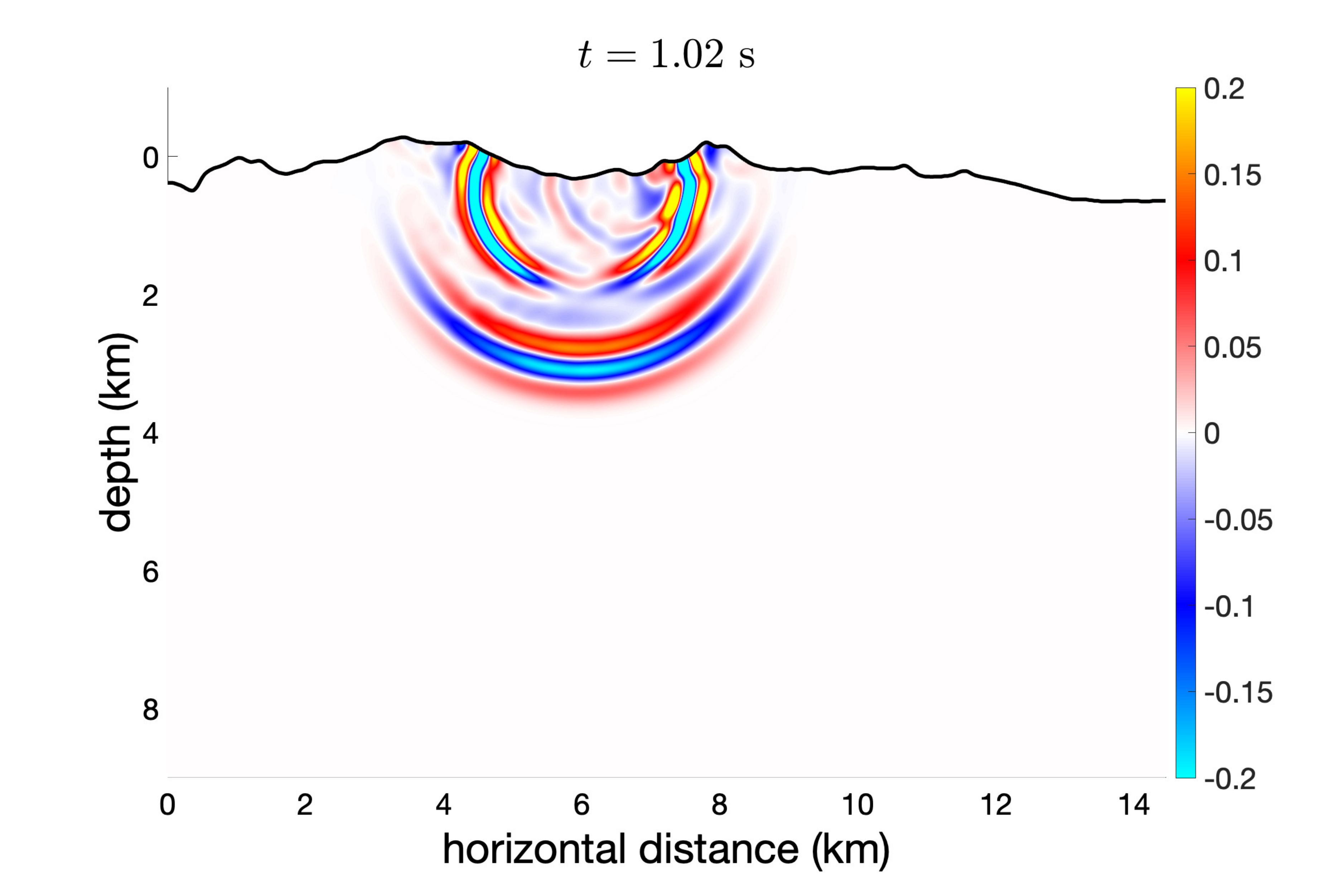}
    \end{subfigure}
    \\
    \begin{subfigure}[b]{.49\linewidth}
        \centering
        \includegraphics[width=0.8\linewidth, trim=2.5cm 0.7cm 2.0cm 0.2cm, clip=true]{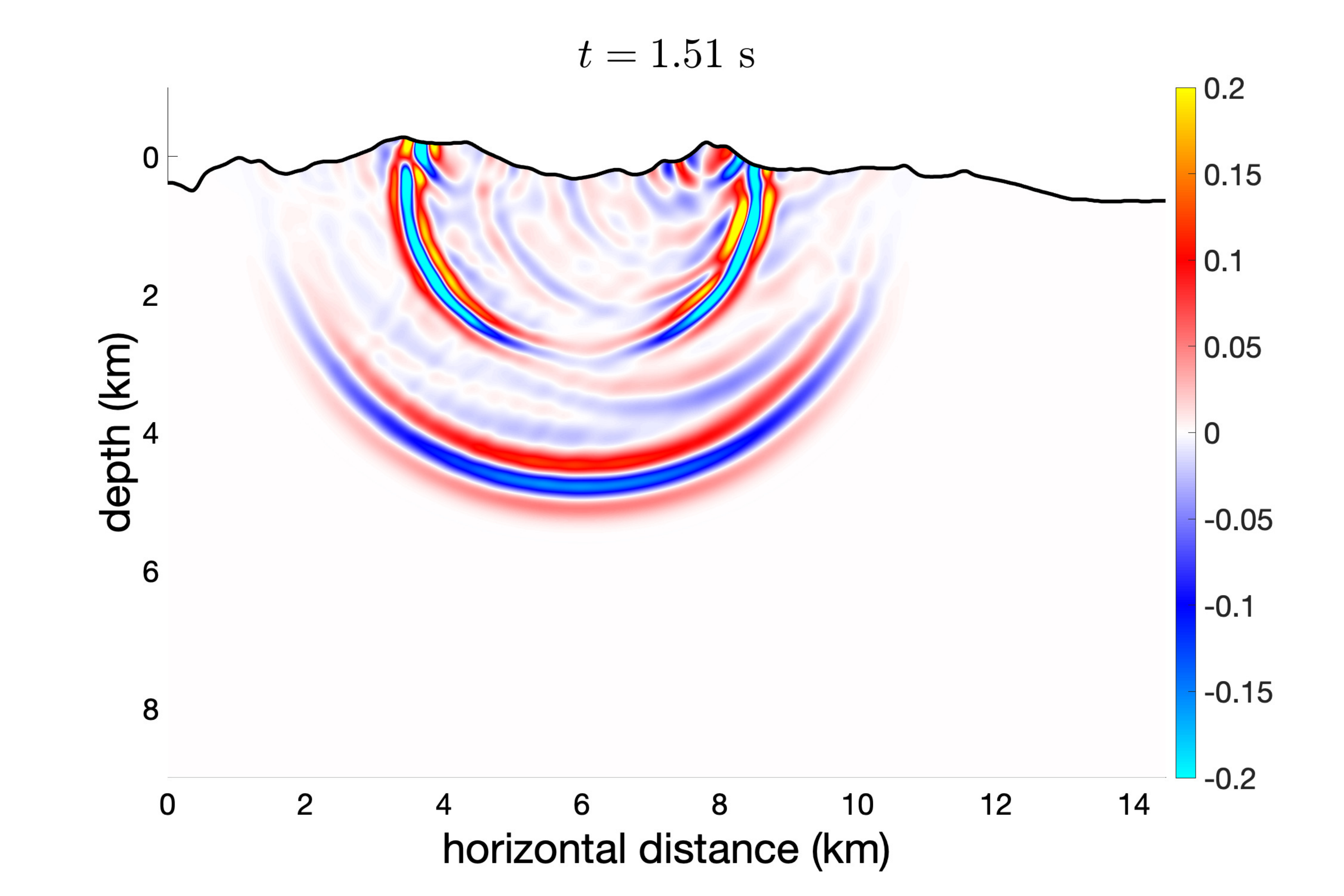}
    \end{subfigure}
    \begin{subfigure}[b]{.49\linewidth}
        \centering
        \includegraphics[width=0.8\linewidth, trim=2.5cm 0.7cm 2.0cm 0.2cm, clip=true]{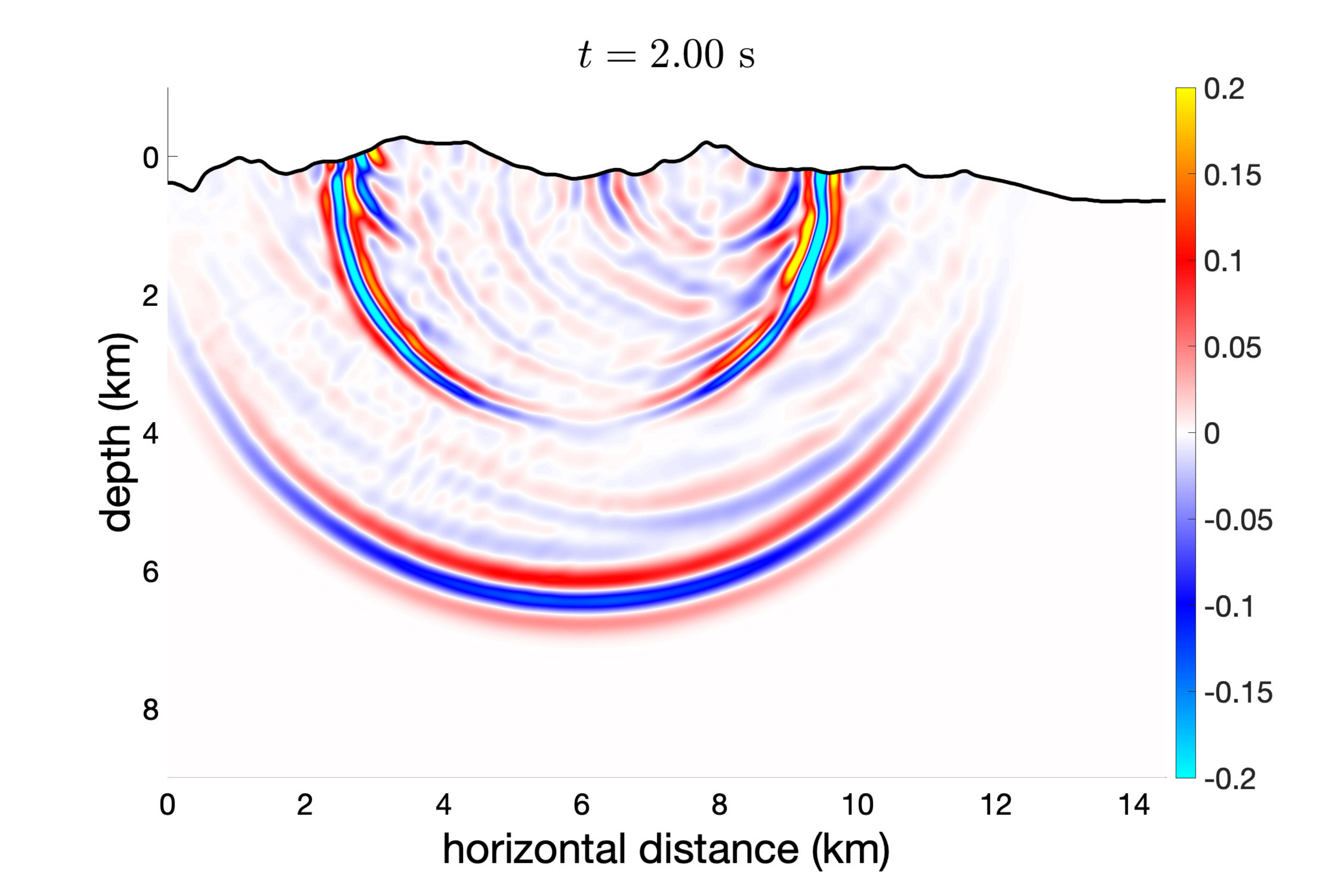}
    \end{subfigure}
    \\
    \begin{subfigure}[b]{.49\linewidth}
        \centering
        \includegraphics[width=0.8\linewidth, trim=2.5cm 0.7cm 2.0cm 0.2cm, clip=true]{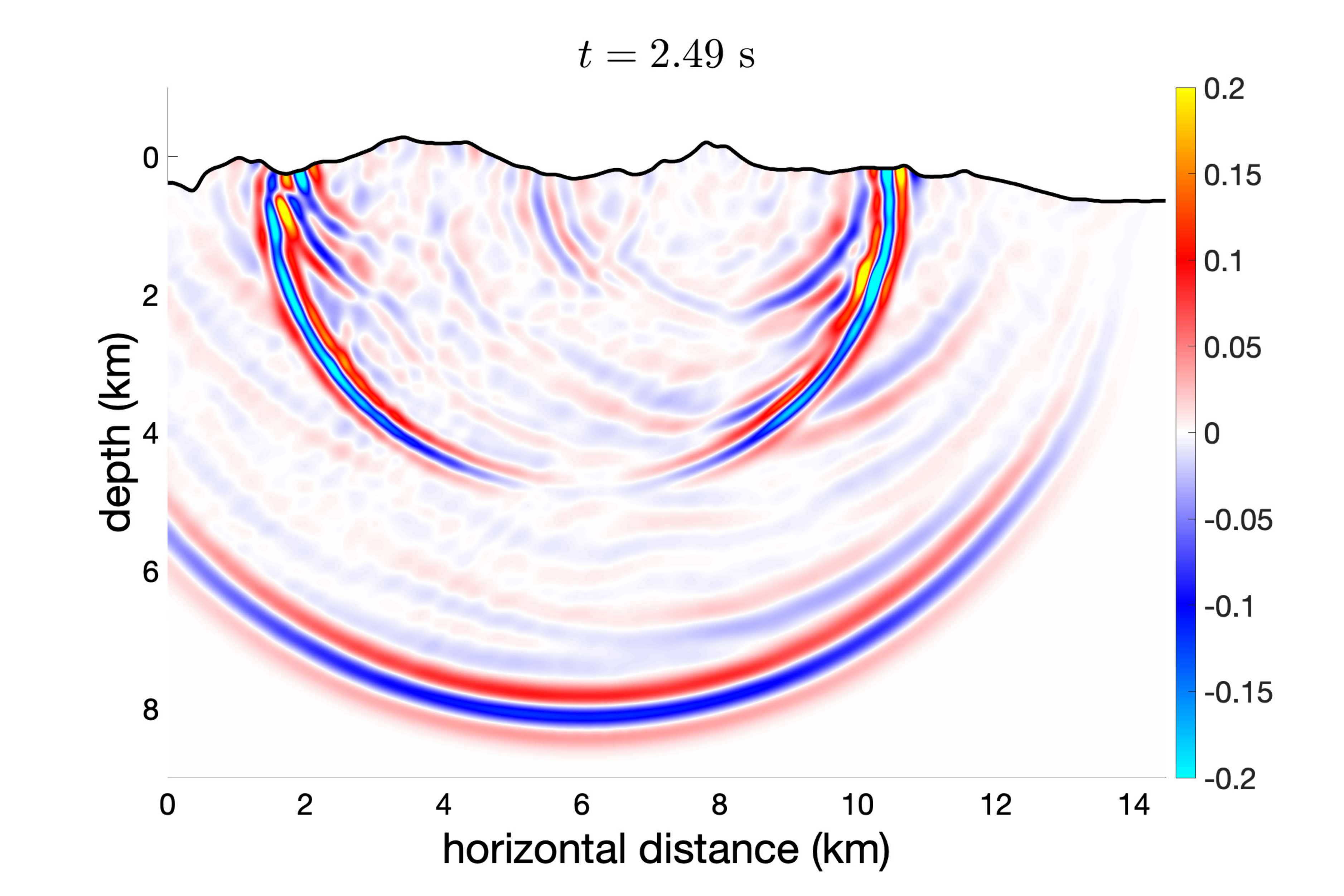}
    \end{subfigure}
    \begin{subfigure}[b]{.49\linewidth}
        \centering
        \includegraphics[width=0.8\linewidth, trim=2.5cm 0.7cm 2.0cm 0.2cm, clip=true]{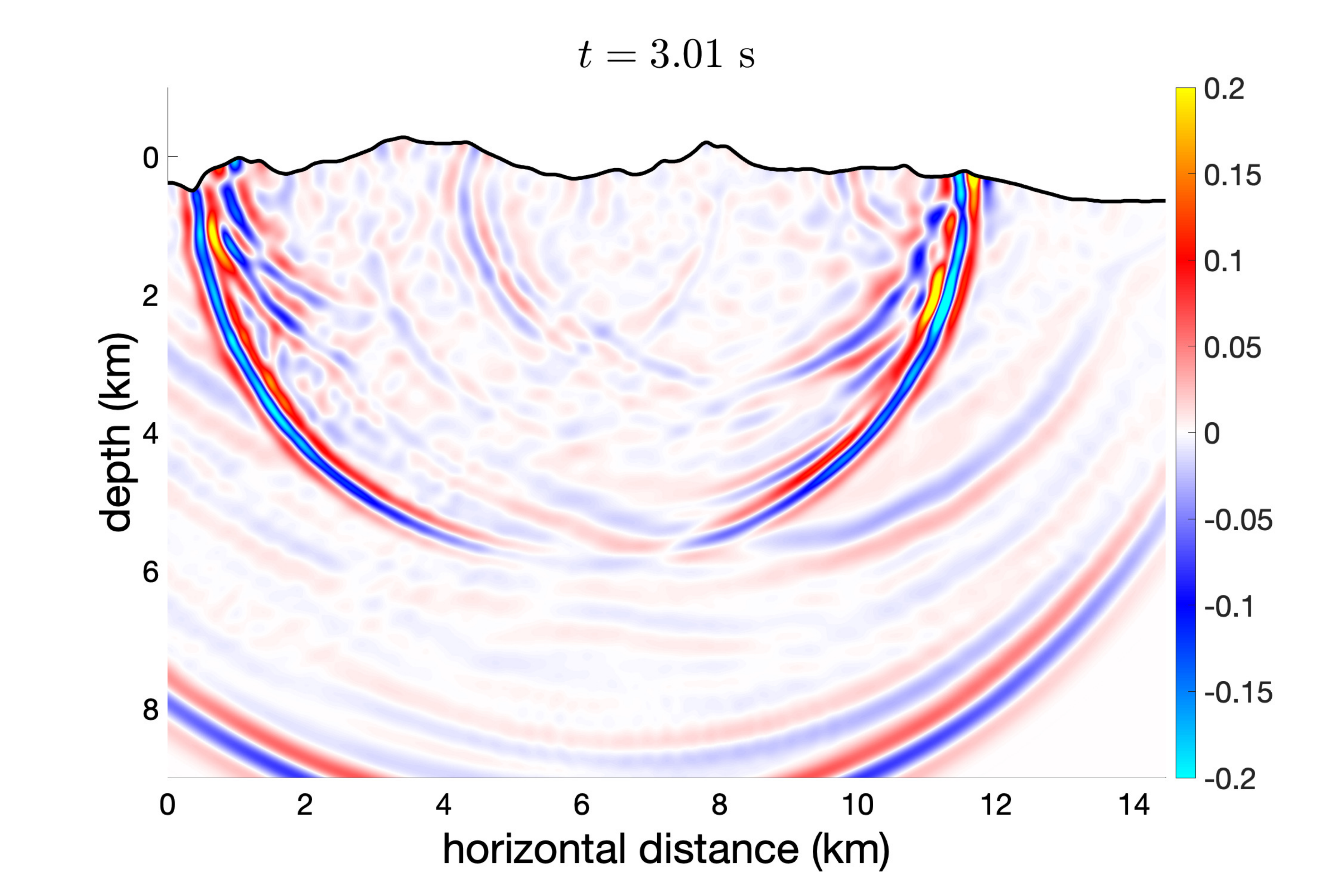}
    \end{subfigure}
    \\
    \centering
    \begin{subfigure}[b]{0.8\linewidth}
        \centering
        \includegraphics[width=1\linewidth, trim=0cm 0cm 0cm 0cm, clip=true]{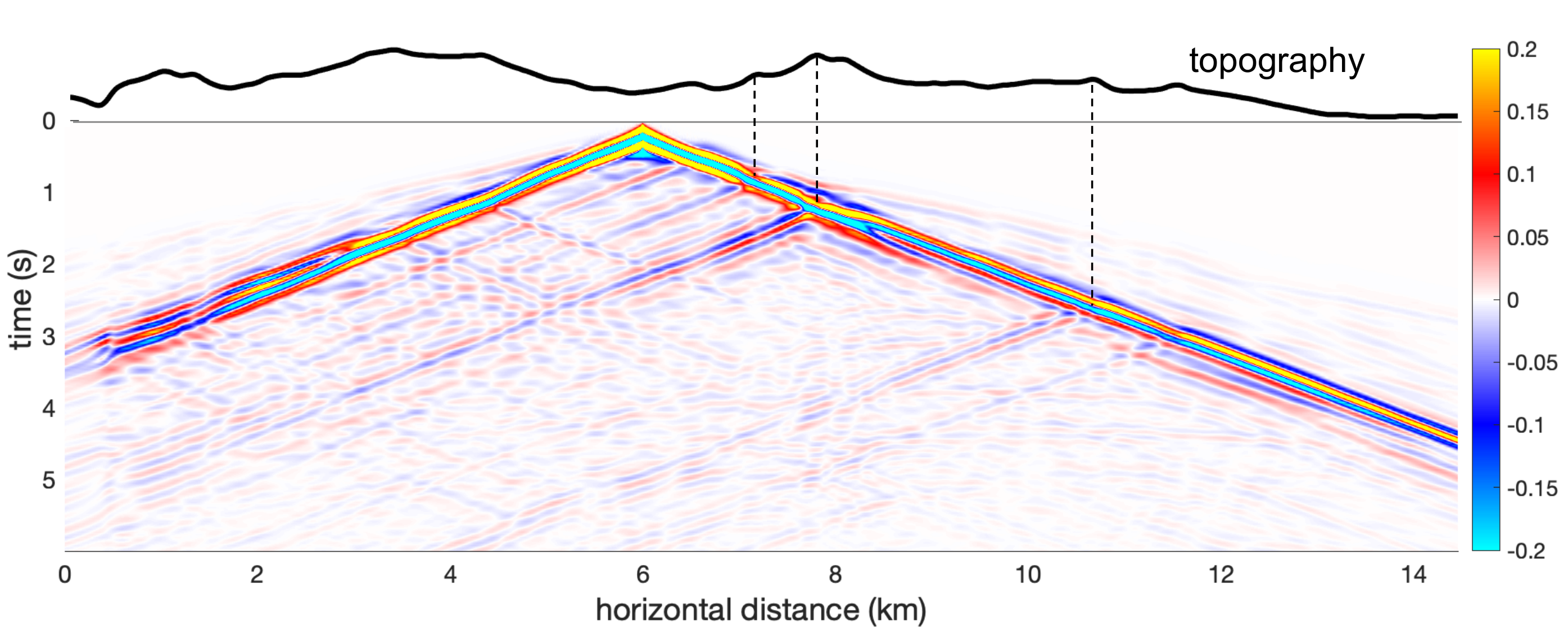}
    \end{subfigure}
    \caption{Plots of $\dot{\tilde{u}}_2$, the vertical component of particle velocity, with constant material parameters. The top three rows show snapshots of $\dot{\tilde{u}}_2$ at different times. The bottom panel shows a space-time plot (shot gather) of $\dot{\tilde{u}}_2$, recorded at the surface. Dashed vertical lines in the space-time plot relate wave scattering to topographical features.}
    \label{fig:snapshot_const}
\end{figure}

\section{Conclusions} \label{sec:conclusions}
We have developed an SBP-SAT method for the anisotropic elastic wave equation on curvilinear multiblock grids in $d$ dimensions. Robin boundary conditions, displacement boundary conditions, and interface conditions are all imposed using SATs, which are designed so that the spatial discretization is energy-stable and self-adjoint. The method assumes fully compatible diagonal-norm SBP operators for variable coefficients. In the numerical experiments, we formed fully compatible operators (here referred to as \emph{adapted} fully compatible operators) by adding a correction to the compatible operators constructed by Mattsson \cite{Mattsson11}. Although the resulting fully compatible operators are one order less accurate at grid end points, our numerical experiments indicate that the global convergence rate is reduced by only half an order, for orders four and six, and not at all for order two. The convergence rates are 2, 3.5, and 4.5, for interior orders two, four and six.

We have applied the new method to problems inspired by elastodynamic cloaking and seismic imaging. In elastodynamic cloaking, anisotropic materials are essential. Hence methods such as ours, which can handle general anisotropy, are the key to evaluating the performance of proposed cloaks via numerical simulation. In the seismic imaging experiment we considered the SEAM Foothills velocity model \cite{Oristaglio2016}, which features large variations in elevation. Our method offers accurate approximation of the topography and the free surface boundary condition, both of which are necessary to model the highly complex surface waves accurately.

MATLAB code that reproduces figures \otherchange{1-5} is available at \url{https://sourceforge.net/projects/elastic-curvilinear/} .


\section*{Acknowledgments}
\otherchange{This research was supported by the Southern California Earthquake Center (Contribution No.\ 10787). SCEC is funded by NSF Cooperative Agreement EAR-1600087 \& USGS Cooperative Agreement G17AC00047.} M.\ Almquist gratefully acknowledges support from the Knut and Alice Wallenberg Foundation (Dnr.\ KAW 2016.0498).
We thank Joe Stefani for useful discussions of seismic imaging and help with the Foothills model.


\end{document}